\documentclass[pdflatex,sn-mathphys-num]{sn-jnl}

\usepackage{amsmath,amssymb,amsfonts,amsthm}
\usepackage{mathrsfs}
\usepackage{mathtools}
\usepackage{booktabs}
\usepackage{array}
\usepackage{float}
\usepackage{microtype}
\usepackage{enumitem}
\usepackage{placeins}
\usepackage[caption=false]{subfig}
\usepackage{braket}

\renewcommand{\Set}[1]{\left\{#1\right\}}
\usepackage[dvipsnames]{xcolor}
\usepackage{tikz}

\setenumerate{label=(\roman*),itemsep=3pt,topsep=3pt}

\newcolumntype{L}[1]{>{\raggedright\arraybackslash}p{#1}}

\mathtoolsset{showonlyrefs}
\allowdisplaybreaks

\newtheorem{theorem}{Theorem}[section]
\newtheorem{proposition}[theorem]{Proposition}
\newtheorem{lemma}[theorem]{Lemma}
\newtheorem{corollary}[theorem]{Corollary}
\theoremstyle{definition}

\theoremstyle{remark}

\DeclareMathOperator{\conv}{conv}
\DeclareMathOperator{\cone}{cone}
\DeclareMathOperator{\card}{card}
\DeclareMathOperator{\clconv}{\overline{conv}}

\newcommand{\R}{\mathbb R}
\newcommand{\Smat}{\mathbb S}
\newcommand{\vd}{\mathbf{d}}
\newcommand{\vx}{\mathbf{x}}
\newcommand{\vy}{\mathbf{y}}
\newcommand{\vz}{\mathbf{z}}
\newcommand{\vr}{\mathbf{r}}
\newcommand{\vnu}{\boldsymbol{\nu}}

\newcommand{\vc}{\mathbf{c}}
\newcommand{\cC}{\bar{\mathcal C}}

\DeclareRobustCommand{\Fset}{\mathcal{F}}
\DeclareRobustCommand{\Gset}{\mathcal{G}}

\DeclareRobustCommand{\Sset}{\mathcal{S}}

\newcommand{\Ncone}{\mathcal N}

\newcommand{\eps}{\varepsilon}

\newcommand{\mxRrp}{\begin{pmatrix}
		p & \vr^\top\\\vr & R
\end{pmatrix}}
\newcommand{\mxX}{\begin{pmatrix}
		1 & \vx^\top\\ \vx & X
	\end{pmatrix}}

\title[Quadratic Convexification of a Square Truncated by a Hyperbola]{Quadratic Convexification of a Square Truncated by a Hyperbola}
\author[1]{\fnm{Yipeng} \sur{Zhang}}
\author[1]{\fnm{Yuyuan} \sur{Ouyang}}
\author[1]{\fnm{Boshi} \sur{Yang}}
\affil[1]{\orgdiv{School of Mathematical and Statistical Sciences}, \orgname{Clemson University}, \orgaddress{\city{Clemson}, \postcode{29634}, \state{South Carolina}, \country{USA}}}

\begin{document}

\abstract{
We study the quadratic convexification of the compact nonconvex set
\[
\Gset
:=
\left[\tfrac12,2\right]^2
\cap
\Set{(x_1,x_2)\in\R^2\mid x_1x_2\leq1},
\]
which is a square truncated by a hyperbolic arc. Although the ordinary convex hull of \(\Gset\) is a triangle, its lifted convex hull in the complete quadratic space retains the nontrivial geometry of the curved boundary. We characterize all extreme rays of the cone of quadratic polynomials nonnegative on \(\Gset\), including parameterized families of bounded tangent and bitangent rays. Using this classification and conic duality, we derive an exact finite semidefinite representation of the lifted convex hull of \(\Gset\). The analysis follows the general framework of our earlier work on an unbounded product-constrained region, but the bounded geometry creates new boundary-contact patterns and leads to a different finite organization of the nonnegative-quadratic cone.
}

\maketitle

\section{Introduction}\label{sec:intro}

We study the compact nonconvex set
\begin{align}
	\label{eq:Gset}
	\Gset
	:=
	\left[\tfrac12,2\right]^2
	\cap
	\{(x_1,x_2) \mid x_1x_2\leq1\}
    = \{(x_1,x_2) \mid x_1 \geq \frac12, x_2 \geq \frac12, x_1x_2\leq1\}.
\end{align}
Geometrically, \(\Gset\)
is a square whose upper-right corner is removed by the hyperbolic arc
\begin{align}
	\label{eq:Gamma}
    \Gamma
    :=
    \Set{(u,u^{-1})\mid \tfrac12\leq u\leq2}.
\end{align}
Its ordinary convex hull is the triangle
\[
    \conv(\Gset)
    =
    \Set{(x_1,x_2)\in\R^2\mid
        x_1\geq\tfrac12,\ 
        x_2\geq\tfrac12,\ 
        x_1+x_2\leq\tfrac52},
\]
with vertices
\begin{align}
	\label{eq:ABC}
    A=(\tfrac12,\tfrac12),\qquad
    B=(\tfrac12,2),\qquad
    C=(2,\tfrac12).
\end{align}
Thus linear optimization cannot distinguish \(\Gset\) from a triangle,
whereas quadratic objectives can detect the curved boundary and the portion
of the triangle lying above it. This contrast makes \(\Gset\) a simple
setting in which the ordinary convex hull is elementary but the quadratic
convexification remains nontrivial.

For a set \(\Sset\subseteq\R^n\), its closed lifted convex hull in the
complete quadratic space is
\[
    \cC(\Sset)
    :=
    \clconv\left\{
        \begin{pmatrix}1\\ \vx\end{pmatrix}
        \begin{pmatrix}1\\ \vx\end{pmatrix}^{\!\top}
        \;\middle|\;
        \vx\in\Sset
    \right\}.
\]
Writing
\[
    Y=
    \begin{pmatrix}
        1&\vx^\top\\
        \vx&X
    \end{pmatrix},
\]
a quadratic function
\(\vx^\top Q\vx+2\vc^\top\vx+\gamma\) becomes the linear function
\(Q\mathbin{\bullet}X+2\vc^\top\vx+\gamma\). Consequently, an exact
description of \(\cC(\Sset)\) provides a convex reformulation for every
quadratic objective over \(\Sset\).

Our analysis approaches this lifted hull through its dual cone of
nonnegative quadratic functions. For \(R\in\Smat^n\), \(\vr\in\R^n\),
and \(p\in\R\), let
\begin{equation}\label{eq:q-def-intro}
    q_{R,\vr,p}(\vx)
    :=
    \vx^\top R\vx+2\vr^\top\vx+p,
\end{equation}
and define
\[
    \Ncone(\Sset)
    :=
    \Set{
        \begin{pmatrix}
            p&\vr^\top\\
            \vr&R
        \end{pmatrix}
        \;\middle|\;
        q_{R,\vr,p}(\vx)\geq0
        \quad\forall\,\vx\in\Sset
    }.
\]
We identify a quadratic with its coefficient matrix whenever no confusion
can arise. Every member of \(\Ncone(\Sset)\) gives a valid inequality
\[
    R\mathbin{\bullet}X+2\vr^\top\vx+p\geq0
\]
for \(\cC(\Sset)\). To state the duality precisely, define
\begin{equation}\label{eq:KF}
    \mathcal K(\Sset)
    :=
    \overline{\cone}\left\{
        \begin{pmatrix}1\\ \vx\end{pmatrix}
        \begin{pmatrix}1\\ \vx\end{pmatrix}^{\!\top}
        \;\middle|\;
        \vx\in\Sset
    \right\}.
\end{equation}
Then
\begin{equation}\label{eq:KNC_relations}
    \mathcal K(\Sset)^*=\Ncone(\Sset),
    \qquad
    \mathcal K(\Sset)=\Ncone(\Sset)^*,
    \qquad
    \cC(\Sset)
    =
    \mathcal K(\Sset)\cap\Set{Y\mid Y_{00}=1}.
\end{equation}
The extreme rays of \(\Ncone(\Sset)\) therefore identify the irreducible
sources of valid inequalities for the lifted convex hull.

Exact descriptions of complete quadratic lifts are known for several
low-dimensional structures. In particular, simplices can be
treated using positive-semidefinite constraints together with entrywise
nonnegativity or reformulation--linearization technique constraints
\cite{AnstreicherBurer2010,sherali2013reformulation}. A related literature
studies the bilinear graph \(z=x_1x_2\) over a box. Imposing an additional
upper or lower bound on \(z\) produces inequalities beyond the standard
McCormick description \cite{belotti2010valid}, and the resulting
parameterized families can be represented using second-order-cone
constraints \cite{AnstreicherBurerPark2021}. Those results concern the
three-dimensional bilinear graph. The present problem instead asks for the
complete lift
\[
    (x_1,x_2,x_1^2,x_1x_2,x_2^2),
\]
including the interaction between the product moment and both diagonal
quadratic moments.

This paper is a bounded companion to our study of
\[
    \Fset
    :=
    \Set{(x_1,x_2)\in\R_+^2\mid x_1x_2\leq1}
\]
in \cite{zhang2026nonnegative}. The methodological similarity is
intentional: in both papers we first classify the extreme rays of the cone
of nonnegative quadratics and then derive the lifted convex hull by conic
duality. The bounded result, however, is not obtained by simply restricting
the formulation for \(\Fset\). Since \(\Gset\subset\Fset\),
\(\Ncone(\Gset)\) is a larger cone, and truncation changes the relevant
contact geometry. The two unbounded coordinate axes are replaced by finite
line segments meeting the hyperbola at \(B\) and \(C\). Axis contacts and
asymptotic behavior are consequently replaced by endpoint multiplicities,
vertex contacts, and interactions among three compact boundary pieces.
Compactness also makes it possible to reduce boundary nonnegativity to
univariate polynomial nonnegativity on the fixed interval
\([\tfrac12,2]\).

A further motivation comes from the box-constrained product region
\[
    \mathcal B
    :=
    \Set{(x_1,x_2)\in[0,2]^2\mid x_1x_2\leq1}.
\]
The product constraint is redundant whenever \(x_1\leq\tfrac12\) or
\(x_2\leq\tfrac12\), and hence
\[
    \mathcal B
    =
    \bigl([0,\tfrac12]\times[0,2]\bigr)
    \cup
    \bigl([0,2]\times[0,\tfrac12]\bigr)
    \cup
    \Gset.
\]
The first two pieces are boxes, whose complete quadratic lifts are already
well understood, whereas \(\Gset\) is the only piece with a curved
boundary. Consequently, the quadratic convexification of \(\Gset\)
provides the nontrivial component in an exact disjunctive formulation of
the lifted convex hull of \(\mathcal B\). In this sense, studying
\(\Gset\) isolates the essential difficulty created by imposing a product
upper bound within a bounded box.

Our first contribution is a complete characterization of the extreme rays
of \(\Ncone(\Gset)\). Besides affine-square rays, the classification
contains products of the affine functions defining the sides of
\(\conv(\Gset)\), the hyperbola ray \(1-x_1x_2\), a one-parameter family
of bounded lifted tangent rays, and two symmetric two-parameter families of
bounded bitangent rays. The tangent rays have zero sets passing through
\(A,B,C\) and tangent to \(\Gamma\) at an additional point. The bitangent
rays combine a vertex contact with two second-order boundary contacts, one
on a flat side and one on the hyperbolic arc. These contact patterns are
specific to the bounded geometry and differ from those appearing in the
unbounded companion problem.

The proof separates quadratics according to whether their quadratic-part
matrix is positive-semidefinite (PSD). In the PSD case,
extremality reduces to the affine-square rays. In the non-PSD case when quadratic part has a
negative direction, nonnegativity on \(\Gset\) is determined by
nonnegativity on its boundary. The restrictions to the two flat sides are
univariate quadratics, while the restriction to \(\Gamma\), after clearing
denominators, is a polynomial of degree at most four. Extremality can then
be analyzed through the locations and multiplicities of their boundary
zeros. A finite contact analysis produces the bounded tangent and bitangent
families and rules out all remaining configurations.

Our second contribution is an exact finite semidefinite representation of
\(\cC(\Gset)\). Although the extreme-ray description contains continuous
families, all non-square extreme rays vanish at \(B\) or at \(C\). Let
\[
    \Ncone_B
    :=
    \Set{q\in\Ncone(\Gset)\mid q(B)=0},
    \qquad
    \Ncone_C
    :=
    \Set{q\in\Ncone(\Gset)\mid q(C)=0}.
\]
The classification gives the finite organization
\[
    \Ncone(\Gset)
    =
    \Smat_+^3+\Ncone_B+\Ncone_C.
\]
For a quadratic vanishing at \(B\), we show that nonnegativity on
\(\Gset\) is equivalent to nonnegativity on its three boundary pieces,
including when the quadratic part is positive semidefinite. The zero at
\(B\) forces factors in two boundary restrictions. After removing these
factors, membership in \(\Ncone_B\) reduces to nonnegativity of a linear,
a quadratic, and a cubic polynomial on \([\tfrac12,2]\). Fixed-size
Markov--Luk\'acs representations of these univariate conditions yield a
semidefinite representation of \(\Ncone_B\); symmetry treats
\(\Ncone_C\), and duality produces the formulation of
\(\cC(\Gset)\). This endpoint-based decomposition is different from the
direct compression of the tangent and bitangent families used in the
unbounded setting.

The resulting formulation is an exact convex reformulation for every
quadratic objective over \(\Gset\). It can also be used as a local
strengthening in a larger QCQP whenever a pair of variables, after scaling,
satisfies the constraints defining \(\Gset\). The corresponding principal
submatrix of the lifted variable must then satisfy the pulled-back
description of \(\cC(\Gset)\). More broadly, the analysis of \(\Gset\) shows that a simple ordinary convex hull may conceal a substantially richer quadratic lift.

The remainder of the paper is organized as follows.
Section~\ref{sec:roadmap} studies the extreme-ray classification of $\Ncone(\Gset)$. Section~\ref{sec:G} derives the semidefinite
representation of \(\cC(\Gset)\). Section~\ref{sec:conclusion} concludes the paper.

\section{Extreme-ray classification of the nonnegativity cone}
\label{sec:extreme-ray-classification}

This section states and proves the extreme-ray classification of
\(\Ncone(\Gset)\).  We first fix the notation used in the theorem.  The
proof then treats separately quadratic parts that are positive semidefinite
(PSD) and those that have a negative direction, verifies every listed ray,
proves that the list is exhaustive, and finally establishes exact conic
generation.

\subsection{Main theorem}

Let \(AB\) and \(AC\) denote the line segments joining the corresponding
vertices in \eqref{eq:ABC}. Their relative interiors are denoted by
\(AB^\circ\) and \(AC^\circ\).  Thus
\(\partial\Gset=AB\cup AC\cup\Gamma\).  We use the following notations to describe functions of curves that form the boundary of $\Gset$ and its convex hull:
\begin{equation}\label{eq:boundary}
	\ell_{AB}=x_1-\tfrac12,
	\qquad
	\ell_{AC}=x_2-\tfrac12,
	\qquad
	\ell_{\Gamma}=1-x_1x_2,
	\qquad
	\ell_{BC}=\tfrac52-x_1-x_2.
\end{equation}
The first three describe the boundary of \(\Gset\), whereas
\(\ell_{BC}=0\) is the third side of \(\conv(\Gset)\).  In particular,
\begin{equation}\label{eq:affine-cone}
	\Set{\ell\mid \ell\text{ is affine and }\ell\geq0\text{ on }\Gset}
	=
	\cone\{\ell_{AB},\ell_{AC},\ell_{BC}\}.
\end{equation}
Indeed, an affine function is nonnegative on \(\Gset\) if and only if it
is nonnegative on the triangle \(\conv(\Gset)\).  The coefficients of the conic representation depends on the value of $\ell$ at vertices $A$, $B$, and $C$. Specifically, the three coefficients of conic representation
in \eqref{eq:affine-cone} are
\(\tfrac23\ell(C)\), \(\tfrac23\ell(B)\), and
\(\tfrac23\ell(A)\), respectively.

We can now state the theorem on classification of extreme rays below.
\begin{theorem}[Extreme rays of $\Ncone(\Gset)$]
	\label{thm:main}
	The extreme rays of \(\Ncone(\Gset)\) are precisely the rays spanned by
	the following quadratics:
	\begin{enumerate}
		\item The extreme affine squares \(\ell^2\), where \(\ell\) is a nonzero affine function whose zero set contains two
		distinct points of \(\Gset\). 
		\item The convex hull boundary-product rays
		\[
			\ell_{AB}\ell_{BC},
			\qquad
			\ell_{AC}\ell_{BC},
			\qquad
			\ell_{AB}\ell_{AC}.
		\]
		\item The hyperbola ray \(\ell_{\Gamma}\).
		\item The bounded lifted tangent rays
		\begin{equation}\label{eq:f}
			f_u
			:=
			\ell_{AB}(2-x_1)
			+u^2\ell_{AC}(2-x_2)
			-4u\ell_{AB}\ell_{AC},
			\qquad
			\tfrac12\leq u\leq2.
		\end{equation}
		\item The bounded bitangent rays
		\begin{equation}\label{eq:g}
			\begin{aligned}
			g^1_{v,w}
			&:=
			\alpha_{v,w}
			\left(x_1+\frac v2x_2-v-\frac12\right)^2
			+(\alpha_{v,w}-1)f_v
			+\beta_{v,w}\ell_{\Gamma},
			\\
			& \text{where }\alpha_{v,w}
			:=\frac{16(2-w)^2}{9(2-v)^2},
			\qquad
			\beta_{v,w}:=4w-5+3(1-v)\alpha_{v,w},
			\\
			& \qquad\quad\frac12\leq v<2,
			\qquad
			\frac12\leq w<\frac{2+3v}{4},
			\end{aligned}
		\end{equation}
		and $g^2_{v,w}(x_1,x_2):=g^1_{v,w}(x_2,x_1)$.
	\end{enumerate}
	Moreover, \(\Ncone(\Gset)\) is the conic hull of these rays.
\end{theorem}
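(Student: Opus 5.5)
The argument has two halves. First I check that each listed quadratic lies in \(\Ncone(\Gset)\) and spans an extreme ray. Second I show that every extreme ray of \(\Ncone(\Gset)\) appears in the list. Exact conic generation then follows because \(\Ncone(\Gset)\) is closed and pointed: \(\Gset\) has nonempty interior, so a quadratic vanishing on \(\Gset\) is zero. Hence the cone is the conic hull of its extreme rays. Throughout I split quadratics \(q=q_{R,\vr,p}\) by the sign pattern of \(R\).

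\emph{PSD case.} If \(R\succeq0\), then \(q\) is convex, so nonnegativity on \(\Gset\) is equivalent to nonnegativity on the triangle \(\conv(\Gset)\).
\begin{itemize}
\item I invoke the known description of nonnegative quadratics on a simplex (\cite{AnstreicherBurer2010}, where copositive and PSD-plus-nonnegative coincide in low dimension). This writes \(q\) as a sum of affine squares plus nonnegative combinations of products \(\ell_i\ell_j\) of the triangle sides.
\item Among these products, only \(\ell_{AB}\ell_{BC}\), \(\ell_{AC}\ell_{BC}\) and \(\ell_{AB}\ell_{AC}\) can be extreme. Squares \(\ell^2\) whose zero line meets \(\Gset\) in at most one point are not extreme: such a line can be rotated about its contact point, and \(\ell^2\) splits as a combination of a square and a strictly positive remainder.
\item Lines meeting \(\Gset\) in two points do give extreme rays. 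Any summand of \(\ell^2\) in \(\Ncone(\Gset)\) vanishes, together with its gradient, on the segment \(\{\ell=0\}\cap\Gset\). This forces the summand to be a multiple of \(\ell^2\).
\end{itemize}

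\emph{Non-PSD case.} If \(R\) has a negative direction, \(q\) has no interior local minimum on \(\Gset\), so \(q\ge0\) on \(\Gset\) if and only if \(q\ge0\) on \(\partial\Gset=AB\cup AC\cup\Gamma\).
\begin{itemize}
\item On \(AB\) and \(AC\) the restrictions are univariate quadratics on \([\tfrac12,2]\).
\item On \(\Gamma\), the quantity \(u^2q(u,u^{-1})\) is a quartic \(P(u)\) on \([\tfrac12,2]\).
\item Extremality is governed by the zero set with multiplicities. Interior zeros of these univariate restrictions have even order, and endpoint zeros may have odd order. The coefficient space has dimension 6, so an extreme ray must satisfy 5 independent linear contact conditions.
\end{itemize}
I will enumerate contact configurations that reach 5 conditions, using the shared endpoints \(A,B,C\) and \(A\) being a vertex.
\begin{itemize}
\item \textbf{Zeros at \(A,B,C\) plus a double zero on \(\Gamma^\circ\).} This gives the family \(f_u\). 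I verify \(f_u(u,u^{-1})\) has a double root at \(u\), and check the signs on \(AB\) and \(AC\).
\item \textbf{Vertex contact plus a double zero on a flat side and a double zero on \(\Gamma\).} This gives \(g^1_{v,w}\) and \(g^2_{v,w}\). Here I solve the linear conditions explicitly to recover \(\alpha_{v,w}\) and \(\beta_{v,w}\). The constraint \(w<(2+3v)/4\) should be exactly where the restriction to the flat side remains nonnegative and the ray stays distinct from \(f_v\).
\item \textbf{\(\ell_\Gamma\).} It vanishes identically on \(\Gamma\).
\item \textbf{Products \(\ell_{AB}\ell_{BC}\) etc.} These vanish on whole sides.
\end{itemize}
For each listed candidate, extremality is checked by showing that the contact conditions cut out a one-dimensional space of quadratics. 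Any summand of an element of \(\Ncone(\Gset)\) inherits its boundary zeros with at least the same multiplicity, which is what makes the one-dimensional count sufficient.

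\textbf{Exhaustiveness (main obstacle).} I must show that every other contact pattern either gives fewer than 5 independent conditions, in which case I perturb within the cone to decompose, or forces a quadratic that is negative somewhere on \(\partial\Gset\). The plan is to parametrize by the number \(k\in\{0,1,2\}\) of interior double roots of the quartic \(P\) on \((\tfrac12,2)\), and by which of \(A,B,C\) are zeros and with what order. Then I show the linear conditions are independent, by evaluating a Vandermonde-like determinant, except in the listed degenerate configurations.

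The delicate subcases are these:
\begin{itemize}
\item two double roots on \(\Gamma\), where I show positivity fails on \(AB\) or \(AC\) unless \(q=\ell_\Gamma\) up to scaling;
\item higher-order contacts at \(B\) or \(C\), which should be limits \(u,v\to2\) and are either already included or excluded by the half-open ranges.
\end{itemize}
I expect the sign verification of \(g^{1}_{v,w}\) on \(\Gamma\) across the whole parameter region, together with ruling out the two-double-roots-on-\(\Gamma\) configurations, to be the hardest computational steps. For these I would factor \(P(u)\) as \((u-t)^2\) times a quadratic and track that quadratic's discriminant in terms of \(v,w\).
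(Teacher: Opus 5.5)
\textbf{Genuine gap in the PSD case.} Your PSD case rests on a false step. Convexity of $q$ does not let you pass from nonnegativity on $\Gset$ to nonnegativity on $\conv(\Gset)$. That transfer holds for concave or affine functions, whose minimum over a convex hull is attained on the generating set, but not for convex ones.

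For instance, take $q(\vx)=(x_1-\tfrac65)^2+(x_2-\tfrac65)^2-\tfrac{2}{25}$, which has identity quadratic part. It is nonnegative on $\Gset$, because every point of $\{x_1x_2\le1\}$ in the positive quadrant is at distance at least $\sqrt2/5$ from $(\tfrac65,\tfrac65)$, with equality at $(1,1)$. Yet $(\tfrac65,\tfrac65)\in\conv(\Gset)$ and $q(\tfrac65,\tfrac65)=-\tfrac{2}{25}$.

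Consequently, the simplex decomposition you cite from \cite{AnstreicherBurer2010} only covers the PSD members of $\Ncone(\Gset)$ that happen to be nonnegative on the triangle. Your argument says nothing about a PSD extreme ray that dips below zero on $\conv(\Gset)\setminus\Gset$. Ruling this out is the nontrivial content of Proposition~\ref{prop:psd}:
\begin{enumerate}
\item A zero with vanishing gradient makes $q$ a sum of affine squares.
\item A zero in $AB^\circ$, in $AC^\circ$, or at $A$ with nonzero gradient lets one split off a positive multiple of $\ell_{AB}$ or $\ell_{AC}$. These are not extreme by \eqref{eq:affine-decomposition}.
\item If all zeros lie on $\Gamma$, the KKT multiplier of the constraint $\ell_\Gamma\ge0$ is strictly positive at every zero. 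This yields $\ell_\Gamma\le c\,q$ on $\Gset$, hence the decomposition $q=\eps\ell_\Gamma+(q-\eps\ell_\Gamma)$, which is impossible for a PSD extreme ray.
\end{enumerate}
Some argument of this kind, bringing in $\ell_\Gamma$ and not only the three triangle sides, is needed to close your PSD case.

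\textbf{The non-PSD half.} This is a reasonable alternative to the paper's multiplicity-dominance route. For finitely many contacts and $R\not\succeq0$, the span of the minimal face of $q$ is indeed the space of quadratics whose boundary restrictions vanish to at least $q$'s orders. So your rank-five criterion is valid. Your squeeze $0\le q_i|_{\partial\Gset}\le q|_{\partial\Gset}$ is also a cleaner way to get inherited multiplicities than the paper's derivative arguments.

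However, exhaustiveness is only announced, and the sketch misplaces the delicate cases:
\begin{enumerate}
\item Higher-order contact at $B$ occurs at $u=\tfrac12$ for $f_u$ and at $v=\tfrac12$ for $g^1_{v,w}$. It does not arise as $u,v\to2$; $v\to2$ is where $\alpha_{v,w}$ blows up.
\item These higher-order contacts produce genuine extreme rays with only two contacts, namely $g^1_{1/2,w}$ with $[\vnu_q(B)]_\Gamma=3$ (Lemma~\ref{lem:two-contact-profiles}). Your enumeration must find these.
\item The case $[\vnu_q(B)]_\Gamma=4$ must be excluded separately, via $q=\lambda(x_1+\tfrac14x_2-1)^2+\eta\ell_\Gamma$.
\item Two double roots on $\Gamma$ are not always killed by a sign failure on a flat side. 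For the pattern $\{X_s,G_u,G_v\}$, Lemma~\ref{lem:mixed-flat} makes $b_{q\mid AC}$ a nonnegative square. The pattern is eliminated only through the decomposition $q=\lambda\ell_{s,w}^2+\eta\ell_{AB}\ell_{AC}$, not through negativity.
\item Equality in $w\le(2+3v)/4$ gives the affine square \eqref{eq:g-overlap}, not $f_v$.
\end{enumerate}
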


\begin{figure}[!htbp]
	\centering
	\begingroup
	\newcommand{\Gbase}{%
		\coordinate (A) at (0.5,0.5);
		\coordinate (B) at (0.5,2);
		\coordinate (C) at (2,0.5);
		\path[fill=blue!12]
		(A) -- (B)
		-- plot[domain=0.5:2,samples=120,variable=\x]
		({\x},{1/\x})
		-- (C) -- cycle;
		\draw[->,gray!70] (-0.5,0) -- (3,0)
		node[right] {\scriptsize \(x_1\)};
		\draw[->,gray!70] (0,-0.5) -- (0,3)
		node[above] {\scriptsize \(x_2\)};
		\draw[boundary] (A) -- (B);
		\draw[boundary] (A) -- (C);
		\draw[boundary,domain=0.5:2,samples=120,smooth,variable=\x]
		plot ({\x},{1/\x});
		\foreach \P in {A,B,C}
		\node[vertex] at (\P) {};
	}
	
	\begin{minipage}[t]{0.48\textwidth}
		\centering
		\begin{tikzpicture}[
			x=1.45cm,y=1.45cm,
			boundary/.style={thick,blue!65!black},
			vertex/.style={circle,fill=black,inner sep=1.15pt},
			rayzero/.style={thick,red!75!black},
			contact/.style={circle,fill=red!75!black,inner sep=1.35pt}
			]
			\Gbase
			\node[below left]  at (A) {\scriptsize \(A\)};
			\node[above left]  at (B) {\scriptsize \(B\)};
			\node[below right] at (C) {\scriptsize \(C\)};
			\node at (1.15,1.15)
			{\scriptsize \(\Gamma\)};
			\node[blue!65!black] at (1.0,0.7)
			{\scriptsize \(\Gset\)};
		\end{tikzpicture}
		
	\end{minipage}\hfill
	\begin{minipage}[t]{0.48\textwidth}
		\centering
		\begin{tikzpicture}[
			x=1.45cm,y=1.45cm,
			boundary/.style={thick,blue!65!black},
			vertex/.style={circle,fill=black,inner sep=1.15pt},
			rayzero/.style={thick,red!75!black},
			contact/.style={circle,fill=red!75!black,inner sep=1.35pt}
			]
			\Gbase
			\begin{scope}
				\clip (-0.5,-0.5) rectangle (3,3);
				\draw[rayzero,domain=-0.5:3,samples=280,smooth,variable=\t]
				plot ({\t},{(1/28)*(51-32*\t+sqrt(768*\t*\t-1728*\t+1113))});
				\draw[rayzero,domain=-0.5:3,samples=280,smooth,variable=\t]
				plot ({\t},{(1/28)*(51-32*\t-sqrt(768*\t*\t-1728*\t+1113))});
			\end{scope}
			\node[contact] at (A) {};
			\node[contact] at (B) {};
			\node[contact] at (C) {};
			\node[contact] at (1.75,{4/7}) {};
			\node[red!75!black] at (2,0.95)
			{\scriptsize \(f_{7/4}(\vx)=0\)};
			\node[below left]  at (A) {\scriptsize \(A\)};
			\node[above left]  at (B) {\scriptsize \(B\)};
			\node[below right] at (C) {\scriptsize \(C\)};
		\end{tikzpicture}
		
	\end{minipage}
	
	\par\medskip
	
	\begin{minipage}[t]{0.48\textwidth}
		\centering
		\begin{tikzpicture}[
			x=1.45cm,y=1.45cm,
			boundary/.style={thick,blue!65!black},
			vertex/.style={circle,fill=black,inner sep=1.15pt},
			rayzero/.style={thick,red!75!black},
			contact/.style={circle,fill=red!75!black,inner sep=1.35pt}
			]
			\Gbase
			\begin{scope}
				\clip (-0.5,-0.5) rectangle (3,3);
				\draw[rayzero,domain=-0.5:3,samples=280,smooth,variable=\t]
				plot ({\t},{(6/13)*((-7*\t)/3+23/6-sqrt((88*\t*\t-174*\t+90)/9))});
				\draw[rayzero,domain=-0.5:3,samples=280,smooth,variable=\t]
				plot ({\t},{(6/13)*((-7*\t)/3+23/6+sqrt((88*\t*\t-174*\t+90)/9))});
			\end{scope}
			\node[contact] at (B) {};
			\node[contact] at (0.75,0.5) {};
			\node[contact] at (1,1) {};
			\node[red!75!black] at (1.55,1.35)
			{\scriptsize \(g^{1}_{1,\,3/4}(\vx)=0\)};
			\node[below left]  at (A) {\scriptsize \(A\)};
			\node[above left]  at (B) {\scriptsize \(B\)};
			\node[below right] at (C) {\scriptsize \(C\)};
		\end{tikzpicture}
		
	\end{minipage}\hfill
	\begin{minipage}[t]{0.48\textwidth}
		\centering
		\begin{tikzpicture}[
			x=1.45cm,y=1.45cm,
			boundary/.style={thick,blue!65!black},
			vertex/.style={circle,fill=black,inner sep=1.15pt},
			rayzero/.style={thick,red!75!black},
			contact/.style={circle,fill=red!75!black,inner sep=1.35pt}
			]
			\Gbase
			\begin{scope}
				\clip (-0.5,-0.5) rectangle (3,3);
				\draw[rayzero,domain=-0.5:3,samples=280,smooth,variable=\t]
				plot ({(6/13)*((-7*\t)/3+23/6-sqrt((88*\t*\t-174*\t+90)/9))},{\t});
				\draw[rayzero,domain=-0.5:3,samples=280,smooth,variable=\t]
				plot ({(6/13)*((-7*\t)/3+23/6+sqrt((88*\t*\t-174*\t+90)/9))},{\t});
			\end{scope}
			\node[contact] at (C) {};
			\node[contact] at (0.5,0.75) {};
			\node[contact] at (1,1) {};
			\node[red!75!black] at (1.75,1.25)
			{\scriptsize \(g^{2}_{1,\,3/4}(\vx)=0\)};
			\node[below left]  at (A) {\scriptsize \(A\)};
			\node[above left]  at (B) {\scriptsize \(B\)};
			\node[below right] at (C) {\scriptsize \(C\)};
		\end{tikzpicture}
		
	\end{minipage}

	\par\medskip

	\begin{minipage}[t]{0.48\textwidth}
		\centering
		\begin{tikzpicture}[
			x=1.45cm,y=1.45cm,
			boundary/.style={thick,blue!65!black},
			vertex/.style={circle,fill=black,inner sep=1.15pt},
			rayzero/.style={thick,red!75!black},
			contact/.style={circle,fill=red!75!black,inner sep=1.35pt}
			]
			\Gbase
			\begin{scope}
				\clip (-0.5,-0.5) rectangle (3,3);
				\draw[rayzero,domain=-0.5:3,samples=280,smooth,variable=\t]
				plot ({\t},{-2*\t+9/4-(1/4)*sqrt(256*\t*\t-304*\t+97)});
				\draw[rayzero,domain=-0.5:3,samples=280,smooth,variable=\t]
				plot ({\t},{-2*\t+9/4+(1/4)*sqrt(256*\t*\t-304*\t+97)});
			\end{scope}
			\node[contact] at (A) {};
			\node[contact] at (B) {};
			\node[below left] at (A) {\scriptsize \(A\)};
			\node[above left] at (B) {\scriptsize \(B\)};
			\node[below right] at (C) {\scriptsize \(C\)};
			\node[red!75!black,align=center] at (1.45,2.02)
			{\scriptsize \(g^1_{1/2,\,1/2}(\vx)=0\)};
		\end{tikzpicture}

	\end{minipage}
	\hfill
	\begin{minipage}[t]{0.48\textwidth}
		\centering
		\begin{tikzpicture}[
			x=1.45cm,y=1.45cm,
			boundary/.style={thick,blue!65!black},
			vertex/.style={circle,fill=black,inner sep=1.15pt},
			rayzero/.style={thick,red!75!black},
			contact/.style={circle,fill=red!75!black,inner sep=1.35pt}
			]
			\Gbase
			\begin{scope}
				\clip (-0.5,-0.5) rectangle (3,3);
				\pgfmathsetmacro{\xturnleft}{(85-3*sqrt(33))/128}
				\pgfmathsetmacro{\xturnright}{(85+3*sqrt(33))/128}
				\draw[rayzero]
				plot[domain=-0.5:\xturnleft,samples=150,smooth,variable=\t]
				({\t},{-58*\t/11+207/44-sqrt(max(0,1024*\t*\t-1360*\t+433))/44})
				plot[domain=\xturnleft:-0.5,samples=150,smooth,variable=\t]
				({\t},{-58*\t/11+207/44+sqrt(max(0,1024*\t*\t-1360*\t+433))/44});
				\draw[rayzero]
				plot[domain=3:\xturnright,samples=200,smooth,variable=\t]
				({\t},{-58*\t/11+207/44-sqrt(max(0,1024*\t*\t-1360*\t+433))/44})
				plot[domain=\xturnright:3,samples=200,smooth,variable=\t]
				({\t},{-58*\t/11+207/44+sqrt(max(0,1024*\t*\t-1360*\t+433))/44});
			\end{scope}
			\node[contact] at (B) {};
			\node[contact] at (0.8,0.5) {};
			\node[below left] at (A) {\scriptsize \(A\)};
			\node[above left] at (B) {\scriptsize \(B\)};
			\node[below right] at (C) {\scriptsize \(C\)};
			\node[red!75!black,align=center] at (1.35,1.82)
			{\scriptsize \(g^1_{1/2,\,4/5}(\vx)=0\)};
		\end{tikzpicture}

	\end{minipage}
	\endgroup
	\caption{\label{fig:rays}
		The bounded region \(\Gset\) and the zero set of some
		extreme rays.  Top left: the region \(\Gset\) with its three
		vertices \(A,B,C\) and the hyperbolic arc \(\Gamma\).
		Top right: the zero set of the bounded lifted tangent ray
		\(f_{7/4}\).  Middle left: the zero
		set of the bounded bitangent ray \(g^{1}_{1,\,3/4}\). Middle right: the zero set of the bounded bitangent 
		ray \(g^{2}_{1,\,3/4}\).
		Bottom left: the zero set of the bounded bitangent ray \(g^{1}_{1/2,\,1/2}\). Bottom right: the zero set of the bounded bitangent 
		ray \(g^{1}_{1/2,\,4/5}\).
	}
\end{figure}

A few remarks are in place.
First, 
The zero sets of bounded lifted tangent rays $f_u$ always pass through all vertices $A$, $B$, and $C$, and is tangent to a relative interior point $(u,1/u)$ of the hyperbolic arc $\Gamma$. 
The term \emph{bounded lifted tangent ray} adapts the terminology
\emph{lifted tangent} from \cite{belotti2010valid} (see also \cite{AnstreicherBurerPark2021} and \cite{zhang2026nonnegative}). 
In their setting, the zero set of a
lifted tangent ray is tangent to the hyperbola \(x_1x_2=1\) at one point.
Here, the zero set meets that hyperbola in the nonnegative quadrant at \(B\), \(C\), and one relative interior point of the hyperbolic arc $\Gamma$. The first two intersections are
transverse and the third is tangent. Thus the word \emph{bounded}
records that the tangency lies on the bounded hyperbolic side \(\Gamma\).
The top-right panel of Figure~\ref{fig:rays} illustrates
\(f_{7/4}\).
Second,
the zero set of a bounded bitangent ray contains a distinguished vertex (either $B$ or $C$) and is tangential to two contact points on the boundary, one of which lies on the hyperbolic side
\(\Gamma\).  The tangential contact points
may coincide with vertices.
The middle panels of Figure~\ref{fig:rays} give examples of $g_{1,3/4}^1$ and $g_{1,3/4}^2$.
The term \emph{bounded bitangent ray} adapts the terminology
\emph{bitangent} from our previous work \cite{zhang2026nonnegative}. Note that one tangential contact point on the hyperbolic side $\Gamma$ might coincide with a vertex (either $B$ or $C$) that the zero set contains when the parameter $v=1/2$; see the bottom panels of Figure \ref{fig:rays} for examples of $g_{1/2,1/2}^1$ and $g_{1/2,4/5}^1$. 
Third,
all extreme rays are genuinely quadratic; no nonzero affine or constant
function spans an extreme ray. For example,
\begin{equation}\label{eq:affine-decomposition}
	\ell_{AB}
	=
	\frac23\left(
	\ell_{AB}^2
	+\ell_{AB}\ell_{AC}
	+\ell_{AB}\ell_{BC}
	\right)
\end{equation}
is a conic combination of affine square and convex hull boundary-product rays. 
Fourth,
in the definition of bounded bitangent rays we exclude some end points in parameters $v$ and $w$, since they represent rays that already appeared in the list. For example, for any $v\in [1/2,2)$, when $w=(2+3v)/4$ we have
\begin{equation}\label{eq:g-overlap}
	g^1_{v,{(2+3v)}/{4}}(\vx) 
	=
	\left(x_1+\frac v2x_2-v-\frac12\right)^2.
\end{equation}
This is an extreme affine squares ray whose associated affine function passes through $(v+1/2, 0)$ and $(v,1/v)$.

\subsection{Road map of the proof}
\label{sec:roadmap}

The proof of Theorem~\ref{thm:main} is organized according to the
quadratic-part matrix \(R\) and the zeros of extreme rays. We first verify that all the listed quadratics belong to \(\Ncone(\Gset)\) and span extreme rays. We then classify the extreme rays with \(R\succeq0\). The main part of the argument treats the remaining case
\(R\not\succeq0\), where extremality can be analyzed through the locations and multiplicities of boundary zeros. These steps establish that the list in Theorem~\ref{thm:main} is exhaustive. A compact-base argument then shows that the listed rays generate \(\Ncone(\Gset)\).

The separation of the PSD and non-PSD cases is related to the locations of the zeros of extreme rays. If a quadratic \(q_{R,\vr,p}\in\Ncone(\Gset)\) has a zero in \(\operatorname{int}(\Gset)\), then that zero is an unconstrained local minimizer, and the second-order necessary condition gives \(R\succeq0\). Thus, when \(R\not\succeq0\), every zero of \(q_{R,\vr,p}\) lies on \(\partial\Gset\). In this case, nonnegativity on \(\Gset\) is
completely determined by nonnegativity on its boundary, as stated in
Lemma~\ref{lem:boundary-reduction-bounded}. The non-PSD analysis can therefore be reduced to three one-variable polynomials associated with the boundary pieces \(AB\), \(AC\), and \(\Gamma\).

For \(1/2\leq t\leq2\), define the three boundary restrictions
\begin{equation}\label{eq:Pq}
	\begin{aligned}
		b_{q\mid AB}(t)
		&:=q(\tfrac12,t)
		=R_{22}t^2+(R_{12}+2r_2)t+\tfrac14R_{11}+r_1+p,
		\\
		b_{q\mid AC}(t)
		&:=q(t,\tfrac12)
		=R_{11}t^2+(R_{12}+2r_1)t+\tfrac14R_{22}+r_2+p,
		\\
		P_q(t)
		&:=t^2q(t,t^{-1})
		=R_{11}t^4+2r_1t^3+(2R_{12}+p)t^2+2r_2t+R_{22}.
	\end{aligned}
\end{equation}
The boundary restrictions above includes two quadratic $b_{q\mid AB}$, $b_{q\mid AC}$, and one quartic polynomial $P_q$. Here, the polynomial \(P_q\) is a denominator-cleared expression for the restriction to $\Gamma$. Multiplication by \(t^2\) does not change its zeros or its sign.

\begin{lemma}[Boundary property lemma]\label{lem:boundary-reduction-bounded}
	Consider any $q:=q_{R,\vr,p}$ with $R\not\succeq 0$.  Then the followings are equivalent:
	\begin{enumerate}
		\item\label{itm:q_in_NG} \(q\) is nonnegative
		on \(\Gset\);
		\item \label{itm:q_in_NpG} \(q\) is nonnegative on
		\(AB\cup AC\cup\Gamma\);
		\item \label{itm:q_in_NpG_ineq} For any $t\in [1/2,2]$, $b_{q\mid AB}(t)\ge 0$, $b_{q\mid AC}(t)\ge 0$, and $P_q(t)\ge 0$.
	\end{enumerate}
\end{lemma}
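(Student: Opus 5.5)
The plan is to prove the cycle (i)$\Rightarrow$(ii)$\Rightarrow$(iii)$\Rightarrow$(ii)$\Rightarrow$(i). The first two steps and the equivalence (ii)$\Leftrightarrow$(iii) are bookkeeping, and all the substance sits in (ii)$\Rightarrow$(i).

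\emph{Easy directions.} Since $AB\cup AC\cup\Gamma=\partial\Gset\subseteq\Gset$ ($\Gset$ is closed), (i) implies (ii) trivially. For (ii)$\Leftrightarrow$(iii), I will use that the three parametrizations are onto their boundary pieces:
\begin{itemize}
\item $t\mapsto(\tfrac12,t)$ maps $[\tfrac12,2]$ onto $AB$;
\item $t\mapsto(t,\tfrac12)$ maps $[\tfrac12,2]$ onto $AC$;
\item $t\mapsto(t,t^{-1})$ maps $[\tfrac12,2]$ onto $\Gamma$.
\end{itemize}
Moreover $P_q(t)=t^2q(t,t^{-1})$ with $t^2>0$, so it has the same sign as $q$ on $\Gamma$. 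Nonnegativity of $q$ on each piece is therefore exactly nonnegativity of $b_{q\mid AB}$, $b_{q\mid AC}$, $P_q$ on $[\tfrac12,2]$.

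\emph{Main step (ii)$\Rightarrow$(i).} I argue by contradiction using compactness of $\Gset$. Suppose $q\ge 0$ on $\partial\Gset$ but $\min_{\Gset}q<0$. By compactness the minimum is attained at some $\vx^*\in\Gset$, and $\vx^*\notin\partial\Gset$ because $q\ge0$ there. So $\vx^*\in\operatorname{int}(\Gset)$ is an unconstrained local minimizer of $q$. The second-order necessary condition then gives $\nabla^2 q(\vx^*)=2R\succeq 0$, contradicting $R\not\succeq0$.

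A more explicit alternative avoids the minimizer argument. Pick $\vd$ with $\vd^\top R\vd<0$ and, for any interior $\vx$, restrict $q$ to the line $\vx+s\vd$. This restriction is a strictly concave quadratic in $s$. The line meets the compact set $\Gset$ in a set containing a segment through $\vx$ whose endpoints lie on $\partial\Gset$: take the maximal interval of $s$ around $0$ for which $\vx+s\vd$ stays in $\Gset$. By concavity, the value at $\vx$ is at least the minimum of the endpoint values, which is $\ge0$. I prefer this version since it also shows directly that every point of $\Gset$ is dominated by boundary values.

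\emph{Main obstacle.} Which version I use, I need to justify that $\operatorname{int}(\Gset)$ together with $\partial\Gset=AB\cup AC\cup\Gamma$ exhausts $\Gset$, and, for the line argument, that the maximal interval endpoints really lie on these three pieces. Both follow from closedness and boundedness of $\Gset$ and from the stated description of $\partial\Gset$. Since $\Gset$ is nonconvex, the intersection of the line with $\Gset$ may be disconnected, so I will only use the connected component containing $\vx$. Its endpoints are boundary points of $\Gset$, and hence lie in $AB\cup AC\cup\Gamma$.
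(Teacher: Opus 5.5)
Your proposal is correct, and the line-restriction argument you prefer is the paper's proof: choose $d$ with $d^\top R d<0$, restrict $q$ to the connected component of $\{s \mid \vz+sd\in\Gset\}$ containing $0$, note that its endpoints lie on $\partial\Gset$, and use strict concavity. Your alternative compactness/second-order-minimizer argument is also valid, and it matches the reasoning the paper sketches in its road map.
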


\begin{proof}
	Only the direction \ref{itm:q_in_NpG}$\implies$\ref{itm:q_in_NG} requires proof. Suppose that $q$ is nonnegative on the boundary $\partial \Gset=AB\cup AC\cup\Gamma$. Since $R\not\succeq 0$, we may choose \(d\neq0\) with
	\(d^\top Rd<0\). Fix any interior point \(\vz\in\Gset\).  The connected component of 
	\[
	\Set{s\in\R\mid \vz+sd\in\Gset}
	\]
	containing $s=0$ 
	is a compact interval, and its two endpoints map to
	\(\partial\Gset\).  The function \(s\mapsto q(\vz+sd)\) is strictly
	concave with nonnegative values at the end points. Thus its value at $s=0$ is also nonnegative, yielding \(q(\vz)\geq0\). We can now conclude that $q$ is nonnegative on $\Gset$. 
\end{proof}

Note that the above boundary property lemma does not hold in the PSD case; an example is 
$q(\vx):=
\left(x_1-3/4\right)^2
+
\left(x_2-3/4\right)^2
-1/{64}.
$
Direct computation could verify that it is nonnegative on $\partial \Gset$ but is negative in an interior point $(3/4,3/4)$. 
The geometry of $\Gset$ does allow a stronger boundary property lemma that applies to both PSD and non-PSD cases with one additional vanishing condition. Specifically, If $q$ vanishes at $B$, then
\begin{equation}
	\label{eq:vanishB}
	q(B)
	=\tfrac14R_{11}+2R_{12}+4R_{22}+r_1+4r_2+p
	=0.
\end{equation}
In particular,
$b_{q\mid AB}(2)=0$ and $P_q(\tfrac12)=0$.  Consequently,
\begin{equation}\label{eq:B-forced-factors}
	\begin{aligned}
		b_{q\mid AB}(t)
		&=(2-t)\widehat b_{q\mid AB}(t), \text{ where }
		&
		\widehat b_{q\mid AB}(t)
		&:=-R_{22}t-R_{12}-2R_{22}-2r_2,\text{ and }
		\\
		P_q(t)
		&=(t-\tfrac12)\widehat P_q(t),\text{ where }
		&
		\widehat P_q(t)
		&:=R_{11}t^3+(2r_1+\tfrac12R_{11})t^2\\
		&&&\quad +(2R_{12}+p+r_1+\tfrac14R_{11})t\\
		&&&\quad +2r_2+R_{12}+\tfrac12p
		+\tfrac12r_1+\tfrac18R_{11}.
	\end{aligned}
\end{equation}
In this paper we call $\widehat b_{q\mid AB}(t)$ and $\widehat P_q(t)$ \emph{reduced boundary restrictions} of $q$.  
Both of them are defined on $[1/2,2]$. In the following lemma, we show that the special structure of $\Gset$ also allows a stronger boundary property lemma. 

\begin{lemma}\label{lem:strong-bound-property-B}
	Consider any $q:=q_{R,\vr,p}$ that vanishes at the vertex $B$, i.e., $q(1/2,2)=0$. The following statements are equivalent:
	\begin{enumerate}
		\item\label{itm:q_in_NB} \(q\) is nonnegative
		on \(\Gset\);
		\item\label{itm:q_in_Npf_B} \(q\) is nonnegative on
		\(AB\cup AC\cup\Gamma\);
		\item\label{itm:q_in_NB_analytical} For any $t\in [1/2,2]$, $
		\widehat b_{q\mid AB}(t)\geq0,$
		 $
		b_{q\mid AC}(t)\geq0,$ and 
		$\widehat P_q(t)\geq0$.
	\end{enumerate}
	\end{lemma}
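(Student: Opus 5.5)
The implication \ref{itm:q_in_NB}$\Rightarrow$\ref{itm:q_in_Npf_B} is immediate. For \ref{itm:q_in_Npf_B}$\Leftrightarrow$\ref{itm:q_in_NB_analytical} I would use the factorizations \eqref{eq:B-forced-factors}. Nonnegativity of $q$ on $AB$, $AC$, $\Gamma$ is the same as nonnegativity of $b_{q\mid AB}$, $b_{q\mid AC}$, $P_q$ on $[\tfrac12,2]$, because multiplying by $t^2>0$ does not change signs. Since $2-t>0$ on $[\tfrac12,2)$ and $t-\tfrac12>0$ on $(\tfrac12,2]$, we get $b_{q\mid AB}\ge0$ iff $\widehat b_{q\mid AB}\ge0$ on $[\tfrac12,2)$, and $P_q\ge0$ iff $\widehat P_q\ge0$ on $(\tfrac12,2]$. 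Continuity of the reduced restrictions then extends their nonnegativity to the closed interval. The reverse direction is just multiplication by nonnegative factors.

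The substantive step is \ref{itm:q_in_Npf_B}$\Rightarrow$\ref{itm:q_in_NB}. If $R\not\succeq0$, this is Lemma~\ref{lem:boundary-reduction-bounded}. So assume $R\succeq0$, i.e.\ $q$ is convex, and argue by contradiction. Suppose $q$ is negative somewhere on $\Gset$, and let $\vz^*$ minimize $q$ over the compact set $\Gset$. Then $q(\vz^*)<0$, so $\vz^*\in\operatorname{int}\Gset$ because $q\ge0$ on $\partial\Gset$.

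Consider the open convex set $U:=\{\vx\mid q(\vx)<0\}$. It is connected, contains the interior point $\vz^*$, and does not meet $\partial\Gset$. Hence $U\subseteq\operatorname{int}\Gset$. Because $q$ is convex and attains negative values, the closed sublevel set $\{q\le0\}$ is the closure of $U$ (Slater-type fact). Since $q(B)=0$, we get $B\in\overline U$. Convexity of $U$ then places the half-open segment $[\vz^*,B)$ inside $U\subseteq\operatorname{int}\Gset$. In particular, the direction $d:=\vz^*-B$ points from $B$ into $\Gset$. I would then check that it lies in the tangent cone of $\Gset$ at $B$:
\[
T_B=\{d\mid d_1\ge0,\ d_2\le-4d_1\}=\cone\{(0,-1),(1,-4)\},
\]
whose generators are the tangent directions of $AB$ and of $\Gamma$ at $B$; the latter is the derivative of $(u,u^{-1})$ at $u=\tfrac12$.

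The key estimate comes next. Since $q(B)=0$ and $q\ge0$ along $AB$ and along $\Gamma$ leaving $B$, the one-sided derivatives give $\nabla q(B)^\top(0,-1)\ge0$ and $\nabla q(B)^\top(1,-4)\ge0$. By linearity, $\nabla q(B)^\top d\ge0$ for every $d\in T_B$. Now set $\varphi(s):=q(B+sd)$. This is a convex quadratic with $\varphi(0)=0$ and $\varphi'(0)=\nabla q(B)^\top d\ge0$, so $\varphi(s)\ge0$ for all $s\ge0$. But $\varphi(1)=q(\vz^*)<0$, a contradiction.

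I expect the main obstacle to be the geometric step. The naive idea of applying convexity along the ray from $B$ through an arbitrary point fails, because $\Gset$ is not star-shaped with respect to $B$ inside $T_B$: the point $C$ lies outside $B+T_B$. The sublevel-set argument avoids this. It forces the minimizer $\vz^*$ to see $B$ through $\operatorname{int}\Gset$, and that places $\vz^*-B$ in $T_B$. The care needed is in justifying $B\in\overline U$ and in verifying that $T_B$ is exactly the cone generated by the two boundary tangents at $B$.
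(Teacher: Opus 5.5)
Your proposal is correct and follows essentially the same route as the paper. Both arguments take sign conditions on $\nabla q(B)$ from the restrictions to $AB$ and $\Gamma$, show that any point where $q$ is negative lies in $B+\cone\{(0,-1),(1,-4)\}$, and then apply the first-order inequality for the convex $q$. The only difference is how you justify the cone membership: you note that the convex open set $\{q<0\}$ avoids $\partial\Gset$ and hence lies in $\operatorname{int}\Gset$, whereas the paper applies an intermediate-value argument to $x_1x_2$ along the segment from $B$ to show that the segment cannot cross $\Gamma$.
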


	\begin{proof}
	Here \ref{itm:q_in_Npf_B} and
	\ref{itm:q_in_NB_analytical} are equivalent by
	\eqref{eq:B-forced-factors}, and
	\ref{itm:q_in_NB} immediately implies
	\ref{itm:q_in_Npf_B}.  It remains to prove the reverse direction that \ref{itm:q_in_Npf_B} implies \ref{itm:q_in_NB}. Our proof strategy is illustrated in Figure \ref{fig:stronger_boundary_prop}.
	
	Suppose that \(q\) is nonnegative on \(\partial\Gset\) and satisfies
	\(q(B)=0\).  If \(R\not\succeq0\), the conclusion follows from
	Lemma~\ref{lem:boundary-reduction-bounded}.  We therefore assume that
	\(R\succeq0\), so that \(q\) is convex. 
	
	Write
	\[
	\nabla q(B)=(\gamma_1,\gamma_2)^\top.
	\]
	Since \(b_{q\mid AB}\) is nonnegative on
	\([\tfrac12,2]\) and vanishes at its right endpoint \(t=2\), we have
	\[
	\gamma_2=b_{q\mid AB}'(2)\leq0.
	\]
	Similarly, \(P_q\) is nonnegative on \([\tfrac12,2]\) and satisfies
	\(P_q(\tfrac12)=0\).  Hence
	\[
	P_q'(\tfrac12)\geq0.
	\]
	Using \(q(B)=0\) in the derivative of
	\(P_q(t)=t^2q(t,t^{-1})\), we obtain
	\[
	P_q'(\tfrac12)
	=\frac14\gamma_1-\gamma_2.
	\]
	Consequently,
	\begin{equation}\label{eq:NB-gradient-signs}
		\gamma_2\leq0,
		\qquad
		\gamma_1-4\gamma_2\geq0.
	\end{equation}
	
	Suppose, toward a contradiction, that \(q(\vx)<0\) for some
	\(\vx=(x_1,x_2)\in\Gset\).  Consider the segment from \(B\) to \(\vx\),
	parameterized by
	\[
	\vz(s):=(1-s)B+s\vx,
	\qquad 0\leq s\leq1.
	\]
	Convexity and \(q(B)=0\) give
	\begin{equation}\label{eq:NB-negative-segment}
		q(\vz(s))
		\leq (1-s)q(B)+s q(\vx)
		=s q(\vx)<0
		\qquad
		\forall\,s\in(0,1].
	\end{equation}
	
	Now define
	\[
	\rho(s):=z_1(s)z_2(s).
	\]
	We have
	\[
	\rho(0)=1,
	\qquad
	\rho(1)=x_1x_2\leq1,
	\]
	and direct differentiation gives
	\[
	\rho'(0)
	=
	2\left(x_1-\tfrac12\right)
	+\tfrac12(x_2-2)
	=
	\frac12(4x_1+x_2-4).
	\]
	If \(\rho'(0)>0\), then \(\rho(s)>1\) for all sufficiently small
	\(s>0\).  Since \(\rho(1)\leq1\), continuity would give some
	\(\theta\in(0,1]\) such that
	\[
	\rho(\theta)=1.
	\]
	The coordinates of \(\vz(\theta)\) lie in
	\([\tfrac12,2]\), so \(\vz(\theta)\in\Gamma\).  But
	\eqref{eq:NB-negative-segment} would then imply
	\(q(\vz(\theta))<0\), contradicting nonnegativity of \(q\) on
	\(\Gamma\).  Therefore,
	\begin{equation}\label{eq:NB-product-direction}
		4x_1+x_2\leq4.
	\end{equation}
	
	Set
	\[
	\lambda:=\gamma_1-4\gamma_2,
	\qquad
	\mu:=-\gamma_2.
	\]
	By \eqref{eq:NB-gradient-signs}, both \(\lambda\) and \(\mu\) are
	nonnegative.  A direct calculation gives
	\begin{align*}
		\left\langle\nabla q(B),\vx-B\right\rangle
		&=
		\lambda\left(x_1-\tfrac12\right)
		+\mu(4-4x_1-x_2).
	\end{align*}
	Both terms on the right are nonnegative because
	\(x_1\geq\tfrac12\) and \eqref{eq:NB-product-direction} holds.
	The first-order inequality for the convex function \(q\) therefore yields
	\[
	q(\vx)
	\geq q(B)
	+\left\langle\nabla q(B),\vx-B\right\rangle
	\geq0,
	\]
	contradicting \(q(\vx)<0\).
	
	Thus \(q\) is nonnegative on all of \(\Gset\), and hence
	\(q\in\Ncone_B\).
	
	\begin{figure}
		\centering
		\begin{tikzpicture}[
			x=2.5cm,
			y=2.5cm,
			boundary/.style={very thick,blue!65!black},
			divider/.style={thick,densely dashed,violet!80!black},
			vertex/.style={circle,fill=black,inner sep=1.45pt},
			marked/.style={circle,fill=black,inner sep=1.25pt}
			]
			\coordinate (A) at (0.5,0.5);
			\coordinate (B) at (0.5,2);
			\coordinate (C) at (2,0.5);
			
			\coordinate (S) at ({7/8},{1/2});
			\coordinate (Xgrad)  at ({3/5},1);
			\coordinate (Xcross) at ({6/5},{7/10});
			\coordinate (Z)      at ({14/13},{13/14});
			
			\path[fill=blue!14]
			(A) -- (B) -- (S) -- cycle;
			
			\path[fill=orange!18]
			(S) -- (B)
			-- plot[
			domain=0.5:2,
			samples=200,
			variable=\t
			] ({\t},{1/\t})
			-- (C) -- cycle;
			
			\draw[->,gray!70]
			(-0.15,0) -- (2.22,0) node[right] {$x_1$};
			\draw[->,gray!70]
			(0,-0.15) -- (0,2.22) node[above] {$x_2$};
			
			\draw[boundary] (A) -- (B);
			\draw[boundary] (A) -- (C);
			\draw[
			boundary,
			domain=0.5:2,
			samples=200,
			smooth,
			variable=\t
			] plot ({\t},{1/\t});
			
			\draw[divider] (B) -- (S);
			
			\draw[->,thick,blue!70!black] (B) -- (Xgrad);
			\node[marked] at (Xgrad) {};
			
			\draw[->,thick,dashed,orange!80!black]
			(B) -- (Xcross);
			\node[marked] at (Xcross) {};
			
			\node[
			circle,
			fill=orange!85!black,
			inner sep=1.45pt
			] at (Z) {};
			
			\foreach \P in {A,B,C}
			\node[vertex] at (\P) {};
			
			\node[below left]  at (A) {$A$};
			\node[above]       at (B) {$B$};
			\node[below right] at (C) {$C$};
		\end{tikzpicture}
		\caption{The two regions used in the proof of
			Lemma~\ref{lem:strong-bound-property-B}.}
		\label{fig:stronger_boundary_prop}
	\end{figure}

\end{proof}

The two boundary properties above serve different purposes. 
Lemma~\ref{lem:boundary-reduction-bounded} applies to every quadratic
whose quadratic-part matrix has a negative direction, and it will be the
main boundary-reduction tool in the non-PSD extreme-ray classification.
Lemma~\ref{lem:strong-bound-property-B} imposes the additional condition
\(q(B)=0\), but in return it applies without any assumption on the
quadratic-part matrix. We use this stronger endpoint property below to
verify the nonnegativity of the bounded tangent and bitangent families.
More importantly, it will be used in Section~\ref{sec:G} to characterize
the endpoint-vanishing cone \(\Ncone_B\), which is the key step in deriving
the finite semidefinite representation of \(\cC(\Gset)\).

We next record some basic properties needed for the extreme-ray analysis.
\begin{proposition}\label{prop:has-zero}
	Every extreme ray of \(\Ncone(\Gset)\) has a zero on \(\Gset\).
\end{proposition}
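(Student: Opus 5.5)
The plan is a standard compactness-and-subtraction argument. Let \(q=q_{R,\vr,p}\) span an extreme ray of \(\Ncone(\Gset)\), and suppose toward a contradiction that \(q\) has no zero on \(\Gset\). Since \(\Gset\) is compact and \(q\) is continuous and positive on \(\Gset\), the minimum
\[
m:=\min_{\vx\in\Gset}q(\vx)
\]
is attained and satisfies \(m>0\).

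Next I decompose \(q\) using a small positive constant. The constant function \(1\) belongs to \(\Ncone(\Gset)\): its coefficient matrix is \(\mathrm{diag}(1,0,0)\). For \(0<\eps\le m\), the function \(q-\eps\) is nonnegative on \(\Gset\), so
\[
q=(q-\eps)+\eps\cdot 1
\]
writes \(q\) as a sum of two members of \(\Ncone(\Gset)\). Extremality of the ray spanned by \(q\) then forces both summands to be nonnegative multiples of \(q\). In particular the constant \(1\) must be a positive multiple of \(q\), so \(q\) is itself a positive constant.

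It remains to show that a positive constant cannot span an extreme ray. This follows from the decomposition \eqref{eq:affine-decomposition}. Using \(\ell_{AB}+\ell_{AC}+\ell_{BC}=\tfrac32\), we can write
\[
1=\tfrac23(\ell_{AB}+\ell_{AC}+\ell_{BC}),
\]
and each \(\ell\) in this sum is nonnegative on \(\Gset\). None of these affine functions is a constant multiple of \(1\), so \(1\) is a nontrivial sum of non-proportional elements of \(\Ncone(\Gset)\) and hence not extreme. This contradicts the conclusion of the previous step.

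Alternatively, one can verify directly that \(1=\ell_{AB}\cdot\tfrac23+\dots\), or more simply take \(\eps\) strictly less than \(m\), so that \(q-\eps\) is still not identically zero. In either form the argument is routine. The only point needing care is checking that the summands are non-proportional to \(q\), and the affine decomposition of the constant handles that.
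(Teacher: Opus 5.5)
Your argument is correct and is essentially the paper's compactness-and-subtraction proof. The paper subtracts \(\eps\ell_{AB}^2\) with \(0<\eps<m/\max_{\Gset}\ell_{AB}^2\), which vanishes on \(AB\) while \(q\) does not, so non-proportionality is immediate; subtracting the constant instead forces you to rule out constant \(q\) separately via \(1=\tfrac23(\ell_{AB}+\ell_{AC}+\ell_{BC})\), which you do correctly. The aside in your last paragraph is wrong, though: choosing \(\eps<m\) so that \(q-\eps\not\equiv0\) does not by itself give non-proportionality, because for constant \(q\) the difference \(q-\eps\) is still a positive multiple of \(q\), so the affine decomposition (or a subtracted element with a zero on \(\Gset\)) is genuinely needed.
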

\begin{proof}
	If \(q>0\) on the compact set \(\Gset\), let
	\(m:=\min_{\vx\in\Gset}q(\vx)>0\) and \(M:=\max_{\vx\in\Gset}\ell_{AB}^2(\vx)>0\).  For
	any \(0<\eps<m/M\), both \(\eps\ell_{AB}^2\) and
	\(q-\eps\ell_{AB}^2\) are nonnegative on \(\Gset\).  The affine square \(\eps\ell_{AB}^2\) has
	zeros whereas \(q\) does not, so this decomposition is nontrivial. Hence $q$ is not an extreme ray.
\end{proof}

\begin{lemma}\label{lem:contact-uniqueness}
	Let \(h\) be a quadratic that vanishes identically on \(AC\).
	\begin{enumerate}
		\item If there exists \(v \in (\frac{1}{2}, 2)\), such that
		\(h(t,t^{-1})\) has a double zero at \(t=v\) and \(h(B)=0\), then \(h=0\) .
		\item If \(P_h\) has a zero of multiplicity at least three at
		\(t=\tfrac12\), then \(h=0\).
	\end{enumerate}
\end{lemma}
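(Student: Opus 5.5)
The plan is to factor \(h\) through \(\ell_{AC}\) and reduce both statements to root counting for a quadratic in one variable. First I will show that \(h\) vanishing identically on the segment \(AC\) forces \(h=\ell_{AC}\,m\) for some affine function \(m\). The restriction \(t\mapsto h(t,\tfrac12)=b_{h\mid AC}(t)\) is a univariate polynomial of degree at most two. It vanishes on the whole interval \([\tfrac12,2]\), so it is identically zero and \(h\) vanishes on the entire line \(x_2=\tfrac12\). Writing \(h\) as a polynomial in \(x_2-\tfrac12\) with coefficients polynomial in \(x_1\), the constant term is \(h(x_1,\tfrac12)\equiv0\). Hence
\[
h=\ell_{AC}\,(a x_1+b x_2+c)
\]
for some \(a,b,c\in\R\). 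It therefore suffices to show \(a=b=c=0\) in each case.

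Next I compute the restriction to \(\Gamma\) in this factored form:
\[
P_h(t)=t^2h(t,t^{-1})=t\left(t^{-1}-\tfrac12\right)\cdot t\left(a t+b t^{-1}+c\right)=\left(1-\tfrac t2\right)\left(a t^2+c t+b\right).
\]
The linear factor \(1-\tfrac t2\) vanishes only at \(t=2\). So at any point \(t\in[\tfrac12,2)\), the multiplicity of a zero of \(P_h\) equals the multiplicity of that zero for the quadratic \(Q(t):=a t^2+ct+b\). Likewise, for \(t>0\) multiplication by \(t^2\) does not change zero multiplicities, so a double zero of \(h(t,t^{-1})\) at \(t=v\) is a double zero of \(P_h\) at \(v\).

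For part (ii), a zero of multiplicity at least three of \(P_h\) at \(t=\tfrac12\) gives a triple zero of \(Q\). Since \(Q\) has degree at most two, \(Q\equiv0\), so \(a=b=c=0\) and \(h=0\).

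For part (i), a double zero at \(v\in(\tfrac12,2)\) forces \(Q(t)=a(t-v)^2\), that is, \(b=av^2\) and \(c=-2av\). Now I use \(h(B)=0\). Since \(\ell_{AC}(B)=\tfrac32\neq0\), the affine factor must vanish at \(B=(\tfrac12,2)\), giving \(\tfrac a2+2b+c=0\). Substituting yields
\[
a\left(\tfrac12+2v^2-2v\right)=2a\left(v-\tfrac12\right)^2=0.
\]
Because \(v\neq\tfrac12\), this gives \(a=0\), hence \(b=c=0\) and \(h=0\).

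The only step needing care is the divisibility \(h=\ell_{AC}m\) from vanishing on the segment; everything after it is elementary root counting. I should also make sure the endpoint \(t=2\), where the factor \(1-\tfrac t2\) vanishes, never interferes. This holds because \(v<2\) in part (i) and the zero in part (ii) is at \(t=\tfrac12\).
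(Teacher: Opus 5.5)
Your proof is correct and is essentially the paper's argument. The paper also factors $h$ as $\ell_{AC}$ times an affine function, settles (ii) by noting that $t$ times the affine factor restricted to $\Gamma$ has degree at most two yet a triple zero at $t=\tfrac12$, and in (i) identifies the zero line of the affine factor as the tangent line $x_1+v^2x_2-2v=0$ (exactly your conditions $b=av^2$, $c=-2av$), whose value $2(v-\tfrac12)^2$ at $B$ is nonzero; your identity $P_h(t)=(1-\tfrac t2)(at^2+ct+b)$ simply makes explicit the transfer of multiplicities that the paper leaves implicit.
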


\begin{proof}
	Write \(h=\ell_{AC}a\) for an affine function \(a\).  In the first case,
	\(a(B)=0\), while \(a\) vanishes at \((v,v^{-1})\) and its derivative
	along \(\Gamma\) also vanishes there.  If \(a\neq0\), its zero line must
	therefore be the tangent line
	\[
	x_1+v^2x_2-2v=0.
	\]
	Its value at \(B\) is \(2(v-\tfrac12)^2>0\), a contradiction.
	
	For the second case, \(\ell_{AC}\) does not vanish at \(B\).  Hence
	\(a(t,t^{-1})\) has a zero of multiplicity at least three at
	\(t=\tfrac12\).  Since \(t\,a(t,t^{-1})\) has degree at most two, it is
	the zero polynomial.  Thus \(a=0\), and hence \(h=0\).
\end{proof}

\begin{proposition}\label{prop:candidates}
	Every ray listed in Theorem~\ref{thm:main} belongs to
	\(\Ncone(\Gset)\) and is extreme.
\end{proposition}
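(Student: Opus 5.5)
The plan is to check each family against two requirements: membership in \(\Ncone(\Gset)\) and extremality. For extremality I use one criterion throughout. Suppose \(q=q_1+q_2\) with \(q_1,q_2\in\Ncone(\Gset)\). Each \(q_i\) vanishes wherever \(q\) vanishes on \(\Gset\). Moreover, at a boundary zero of \(q\) at which some restriction (\(b_{q\mid AB}\), \(b_{q\mid AC}\) or \(P_q\)) has an interior double zero, the same restriction of \(q_i\) must have a double zero there as well, since it is nonnegative and vanishes at that point. The same holds for first-order vanishing at an endpoint zero when the corresponding derivative inequality is tight. These conditions are linear in the six coefficients of \(q_i\). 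Extremality follows once they cut out a one-dimensional space, which is spanned by \(q\) itself.

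\textbf{Affine squares and product rays.} For (i), take \(\ell^2\) with \(\ell\) vanishing at distinct points \(a,b\in\Gset\). Any summand \(q_1\) vanishes at \(a\) and \(b\) and is nonnegative on \(\Gset\). The segment \([a,b]\) lies in \(\conv(\Gset)\) but possibly not in \(\Gset\), so I instead use that the zero line of \(\ell\) meets \(\Gset\) in a set containing \(a,b\). I then argue that \(q_1\) vanishes to second order across that line at those points: the gradient of \(q_1\) at an interior zero is zero, and at a boundary zero the tangential zero is double. This forces \(q_1=c\,\ell^2\). When both points lie on a flat side, I instead use the standard result that a quadratic vanishing at two points of a line and nonnegative in a neighborhood on both sides is a multiple of \(\ell^2\).

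For (ii), the products are nonnegative because each factor is nonnegative on the triangle. For \(\ell_{AB}\ell_{BC}\), any summand vanishes on the segments \(AB\) and \(BC\cap\Gset=\{B,C\}\), and I need second-order information at \(C\). Restricted to \(AC\), \(q_1\) is a quadratic vanishing at \(C\) and dominated by \(q\), whose restriction has zeros only at \(t=2\). This, together with vanishing on the line \(AB\), leaves a two-parameter space \(\ell_{AB}\cdot a\) with \(a(B)=0\) and \(a(C)=0\). That space is one-dimensional, so \(q_1\propto\ell_{AB}\ell_{BC}\). The case \(\ell_{AB}\ell_{AC}\) is identical: the quadratic vanishes on both lines, hence is a multiple of their product. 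For (iii), any summand vanishes on the whole arc \(\Gamma\). A quadratic vanishing on infinitely many points of the conic \(x_1x_2=1\) is a multiple of \(1-x_1x_2\) plus zero, by Bézout.

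\textbf{Tangent and bitangent families.} For \(f_u\), compute \(f_u(B)=0\), after which Lemma~\ref{lem:strong-bound-property-B} reduces nonnegativity to three univariate checks. First, \(b_{f_u\mid AC}(t)=u^2\cdot\tfrac32\cdot\dots\); explicitly \(\ell_{AB}=0\) there gives \(u^2\cdot 0\) plus the term \((t-\tfrac12)(2-t)\ge0\). Second, \(\widehat b_{f_u\mid AB}\) is linear and nonnegative at both endpoints. Third, \(\widehat P_{f_u}\) should factor as \((2-t)(t-u)^2\) times a positive constant. For extremality, a summand \(h\) must vanish at \(A,B,C\) and have a double zero of \(P_h\) at \(u\). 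That gives five linear conditions on six coefficients, so I verify that they have rank five.

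For \(g^1_{v,w}\), I check \(g(B)=0\) using \(f_v(B)=0\), \(\ell_\Gamma(B)=0\), and the square vanishing at \(B\) (since \(\tfrac12+v-v-\tfrac12=0\)). Then, with Lemma~\ref{lem:strong-bound-property-B}, I aim to show three things: \(b_{g\mid AC}\) is a nonnegative multiple of \((t-w')^2\) with a double zero at the flat-side contact point \(t=w\); \(\widehat b_{g\mid AB}\ge0\) at both endpoints; and \(\widehat P_g\) is \((t-v)^2\) times a linear factor that is positive on \([\tfrac12,2]\). The constraint \(w<(2+3v)/4\) will be exactly what guarantees \(\alpha>\)\ldots\ and the sign of that linear factor. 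For extremality, a summand \(h\) satisfies \(h(B)=0\), a double zero of \(b_{h\mid AC}\) at \(w\), and a double zero of \(P_h\) at \(v\); when \(v=\tfrac12\), the latter becomes a triple zero at \(\tfrac12\). That is five linear conditions. Subtracting the multiple of \(g\) that matches one coefficient leaves \(h\) vanishing doubly on \(AC\) at a point; restricted to the line it vanishes identically if a further vanishing is forced, and then Lemma~\ref{lem:contact-uniqueness} (part (i) for \(v>\tfrac12\), part (ii) for \(v=\tfrac12\)) gives \(h=0\). Symmetry handles \(g^2\).

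The main obstacle I expect is the explicit factorization of \(\widehat P_{g}\) and \(b_{g\mid AC}\) in terms of \(\alpha_{v,w}\) and \(\beta_{v,w}\), and checking the sign of the residual linear factor over the whole parameter range. I would first do this symbolically at \(w\) and then verify positivity at \(t=\tfrac12\) and \(t=2\).
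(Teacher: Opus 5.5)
Your overall route is the paper's: nonnegativity of \(f_u\) and \(g^1_{v,w}\) via Lemma~\ref{lem:strong-bound-property-B}, and extremality by forcing each summand to inherit the zeros and multiplicities of \(q\), closed off with Lemma~\ref{lem:contact-uniqueness}. Your extremality argument for \(g^1_{v,w}\) is essentially the paper's.

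\textbf{The gap: affine-square and boundary-product rays.} Here you argue only from \(q_1\ge0\) and the vanishing of \(q_1\). Your interior-zero branch is fine. But among admissible \(\ell\), the zero line misses \(\operatorname{int}\Gset\) exactly for \(\ell_{AB},\ell_{AC},\ell_{BC}\), and there the argument fails.
\begin{enumerate}
\item A tangential double zero says nothing about the normal derivative.
\item The ``standard result'' you quote needs nonnegativity on both sides of the line, but \(\Gset\) lies on one side of \(AB\) and \(AC\). Indeed \(q_1=\ell_{AB}\ell_{AC}\) vanishes on all of \(AB\) and is nonnegative on \(\Gset\), yet is not a multiple of \(\ell_{AB}^2\). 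It is excluded only because \(\ell_{AB}^2-q_1\) takes negative values near \(A\).
\item For \(\ell_{BC}^2\), whose zero line meets \(\Gset\) only at the vertices \(B\) and \(C\), neither of your branches applies.
\end{enumerate}
The missing idea, which the paper uses, is to exploit both summands. At a zero \(\vx\in\Gset\) of \(\ell\), the derivatives \(\langle\nabla q_i(\vx),\vd\rangle\) along feasible directions \(\vd\) are nonnegative and sum to \(2\ell(\vx)\langle\nabla\ell,\vd\rangle=0\), so each vanishes. Feasible directions span \(\R^2\) at every point of \(\Gset\), vertices included, so \(\nabla q_i=0\) at both zeros. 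Hence \(q_i(\vz)=(\vz-\vx)^\top R_i(\vz-\vx)\) with \(R_i(\vy-\vx)=0\), and \(q_i\propto\ell^2\). Your own endpoint-tightness criterion, applied at the vertices along the boundary pieces through them, would also work, but your paragraph does not do this.

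The same defect appears for \(\ell_{AB}\ell_{BC}\). Writing \(q_1=\ell_{AB}a\), the condition \(a(B)=0\) does not follow from \(q_1(B)=0\), which holds automatically, nor from the \(AC\)-restriction, which only gives \(a(C)=0\). Use \(0\le q_1\le\ell_{AB}\ell_{BC}\) instead. Dividing by \(\ell_{AB}>0\) off \(AB\) and passing to limits gives \(0\le a\le\ell_{BC}\) on \(\Gset\), so \(a(B)=a(C)=0\). The paper's own sentence that summands vanish on the segment \(BC\) glosses over the same point, since only \(B\) and \(C\) of that segment lie in \(\Gset\).

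\textbf{Endpoint tangent rays.} For \(f_u\) with \(u\in\{\tfrac12,2\}\), your five conditions collapse to four. At \(u=2\), vanishing at \(C\) is the same condition as \(P_h(2)=0\), which leaves a two-dimensional solution space. You need the third-order contact \(P_h''(2)=0\). The paper obtains it by running the tightness argument twice: first \(P_{q_i}'(2)\le0\) with the two derivatives summing to \(0\), then \(P_{q_i}''(2)\ge0\) with the two summing to \(0\). The case \(u=\tfrac12\) is analogous. The same second-order step is what justifies the triple zero you assert for \(g^1_{1/2,w}\); your stated criterion covers only first-order endpoint vanishing.

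\textbf{Membership of \(g^1_{v,w}\).} This is only planned, and the required computation is what the paper supplies:
\begin{itemize}
\item \(b_{g^1_{v,w}\mid AC}(t)=(t-w)^2\);
\item \(\widehat P_{g^1_{v,w}}(t)=(t-v)^2\bigl(t+\tfrac32\alpha_{v,w}-2\bigr)\);
\item the affine \(\widehat b_{g^1_{v,w}\mid AB}\) takes the value \(\tfrac16(2w-1)^2\) at \(t=\tfrac12\) and \(\tfrac32(\sqrt{\alpha_{v,w}}-1)\bigl(v(2v-1)+(v^2-v+1)(\sqrt{\alpha_{v,w}}-1)\bigr)\) at \(t=2\).
\end{itemize}
These are nonnegative because \(w\le(2+3v)/4\iff\alpha_{v,w}\ge1\), together with \(v\ge\tfrac12\), which the value at \(t=2\) also requires.
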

\begin{proof}
	We start by showing that all rays listed in Theorem \ref{thm:main} are nonnegative over $\Gset$. The nonnegativity of extreme affine squares, convex hull boundary-product, and hyperbola rays are straightforward. 	
	For \(f_u\), direct computation yields
$\widehat b_{f_u\mid AB}(t)=u^2(t-\tfrac12)$,
		$b_{f_u\mid AC}(t)=(t-\tfrac12)(2-t)$, and $\widehat P_{f_u}(t)=(2-t)(t-u)^2$. 
	These three (reduced) restrictions are nonnegative on
	\([1/2,2]\), and $f_u$ vanishes at $B$.
	Lemma~\ref{lem:strong-bound-property-B} therefore guarantees that
	\(f_u\in\Ncone(\Gset)\). For \(g^1_{v,w}\), direct computation shows that it vanishes at the vertex $B$ and that two of the (reduced) boundary restrictions are
	\begin{equation}\label{eq:bounded:g-bottom}
		\begin{aligned}
			b_{g^1_{v,w}\mid AC}(t)=(t-w)^2\text{ and }
\widehat P_{g^1_{v,w}}(t)
			=(t-v)^2
			\left(t+\frac32\alpha_{v,w}-2\right).
		\end{aligned}
	\end{equation}
	For the remaining reduced boundary restriction $\widehat b_{g^1_{v,w}\mid AB}$, recall from \eqref{eq:B-forced-factors} that it is affine. Direct computation shows that its endpoint values at $[1/2,2]$ are
	\begin{align*}
		\widehat b_{g^1_{v,w}\mid AB}(\tfrac12)
		&= \frac{(2w-1)^2}{6},\\
		\widehat b_{g^1_{v,w}\mid AB}(2)
		&=\frac32\left(\sqrt{\alpha_{v,w}}-1\right)
		\left(
		v(2v-1)
		+(v^2-v+1)\left(\sqrt{\alpha_{v,w}}-1\right)
		\right).
	\end{align*}
	With $1/2\le t\le 2$, $1/2\le v<2$, $1/2\le w<(2+3v)/4$ as stated in Theorem \ref{thm:main}, we have $\alpha_{v,w}\ge 1$. Thus all the (reduced) restrictions are nonnegative over $t\in [1/2,2]$ and by Lemma~\ref{lem:strong-bound-property-B} we conclude that $g_{v,w}^1$ is nonnegative over $\Gset$. The nonnegativity of $g_{v,w}^2$ follows immediately by symmetry.
	
	It remains to prove extremality. Let
	$
	q=q_1+q_2$ where $q_1,q_2\in\Ncone(\Gset).
	$
	We will show that, if $q$ is any ray listed in Theorem~\ref{thm:main}, then $q_1$ and $q_2$ are both nonnegative multiples of $q$. 
	
	Suppose first that $q=\ell^2$ is an affine square ray with two distinct points \(\vx,\vy\in\Gset\) satisfying 
	\(\ell(\vx)=\ell(\vy)=0\).  Observe that we then have $q_i(\vx)=q_i(\vy)=0$ for $i\in\{1,2\}$. Let $\vd$ be any feasible direction at $\vx$, then we have $\frac{d}{dt}q_i(\vx + t\vd)\!\mid_{t=0}\ge 0$ for all $i$, while $\sum_{i\in\{1,2\}}\frac{d}{dt}q_i(\vx + t\vd)\!\mid_{t=0} = \frac{d}{dt}\ell^2(\vx + t\vd)\!\mid_{t=0} = 2\ell(\vx)\langle \nabla \ell(\vx), \vd\rangle = 0$. Thus the directional derivatives $\frac{d}{dt}q_i(\vx + t\vd)\!\mid_{t=0} = \langle \nabla q_i(\vx), \vd\rangle= 0$ for all $i$. For any $\vx\in \Gset$, there always exist two linearly independent feasible directions, so we have $\nabla q_i(\vx) = 0$ for all $i$. Similarly we can also show that $\nabla q_i(\vy) = 0$ for all $i$. Consequently, $2R_i(\vy - \vx) = \nabla q_i(\vy) - \nabla q_i(\vx) = 0 $ and thus $R_i$ has nonzero kernel. Since \(q_i(\vx)=0\) and \(\nabla q_i(\vx)=0\), we have \(q_i(\vz)=(\vz-\vx)^\top R_i(\vz-\vx)\). Since $\vy-\vx$ is in the kernel of $R_i$, the matrix $R_i$ has rank at most one, and its nonzero eigenvalue must be nonnegative since \(q_i\geq0\) on \(\Gset\). Therefore, \(q_i\) is a nonnegative multiple of the square of an affine function whose zero line passes through \(\vx\) and \(\vy\), and hence \(q_i\) is a nonnegative multiple of \(\ell^2\).
	
	For the hyperbola ray case with $q=\ell_\Gamma$, note that both $q_i$'s
	vanish on the hyperbolic arc \(\Gamma\). A quadratic that vanishes on $\Gamma$ is a scalar multiple of \(\ell_\Gamma\), and nonnegativity guarantee that $q_i$'s are both nonnegative multiples of $\ell_\Gamma$.
	
	For the convex hull boundary-product case with $q=\ell_j\ell_k$ where $j$ and $k$ are distinct elements in $\{AB, AC, BC\}$, note that $q$ (and hence $q_i$'s) vanishes on the line segment that $j$ and $k$ represents. But the only ray that vanishes on both line segments $j$ and $k$ is a nonnegative multiple of $\ell_j\ell_k$, so $q_i$'s are both spanned by $q$. 
	
For the bounded lifted tangent rays, direct calculation yields
\begin{equation}\label{eq:f-curve}
	P_{f_u}(t)
	=
	(t-\tfrac12)(2-t)(t-u)^2.
\end{equation}
Consider \(f_u=q_1+q_2\), where
\(q_1,q_2\in\Ncone(\Gset)\). We claim that for all $i\in\{1,2\}$, $P_{q_i}= \theta_i P_{f_u}$ for some $\theta_i\ge 0$. With the claim, we know that $P_{ q_i- \theta_i f_u}$ is a zero polynomial, and by observing its coefficients in \eqref{eq:Pq} we have $q_i- \theta_i f_u = \delta_i \ell_\Gamma$ for some $\delta_i$. Observing that $f_u$ vanishes at $A$, by nonnegativity we know that $q_i$ both vanishes at $A$, and hence $\delta_i \ell_\Gamma$ vanishes at $A$. But $\ell_\Gamma$ is nonzero at $A$, hence $\delta_i=0$ and thus $q_i$'s are both nonnegative multiples of $f_u$. It suffices to prove the claim. 
Suppose first that \(1/2<u<2\).  Since \(P_{f_u}\) vanishes at
\(1/2\), \(u\), and \(2\) in which $u$ is a minimizer in the interior, the nonnegativity of \(P_{q_1}\) and
\(P_{q_2}\) implies that
\(P_{q_i}(1/2)=P_{q_i}(u)=P_{q_i}(2)=0\) for all $i\in\{1,2\}$.  Therefore,
\((t-1/2)(2-t)(t-u)^2\) divides \(P_{q_i}\). Noting that $P_{q_i}$'s are both  polynomials with degree at most four, we conclude that both of them are nonnegative multiples of $P_{f_u}$. 
When \(u=2\), since $P_{f_u}$ vanishes at $1/2$ and $2$, 
nonnegativity of $P_{q_i}$'s and the optimal condition at $2$ implies that \(P_{q_i}'(2)\leq0\). But by \eqref{eq:f-curve} we have $P_{f_2}'(2)=0$, so $P_{q_i}'(2)=0$. With this result, using second-order optimality condition at $2$ we also have \(P_{q_i}''(2)\geq0\). But by \eqref{eq:f-curve} we also have $P_{f_2}''(2)=0$, so $P_{q_i}''(2)=0$. With $P_{q_i}(2) = P_{q_i}'(2) = P_{q_i}''(2)=0$ we have that 
\((t-1/2)(2-t)^3\) divides \(P_{q_i}\), thus both of them are nonnegative multiples of $P_{f_2}$. Similar analysis can be applied to the case $u=1/2$ and hence the claim is proven.

	Finally, let \(g^1_{v,w}=q_1+q_2\), where
	\(w<(2+3v)/4\).  Recall from the first relation of \eqref{eq:bounded:g-bottom} that $
	b_{g^1_{v,w}\mid AC}(t)=(t-w)^2.$
	When \(w>1/2\), $t=w$ is a double zero of $b_{g^1_{v,w}\mid AC}(t)$ in $[1/2,2]$. Nonnegativity of the restrictions of $q_i$'s requires that $t=w$ is also a zero there. Any zero on the quadratic boundary restriction is always a double zero. Consequently, 
	\[
	b_{q_i\mid AC}(t)=\lambda_i(t-w)^2,
	\qquad
	\lambda_i\geq0,\ \forall i\in \{1,2\}.
	\]
	This conclusion also holds when \(w=1/2\). Indeed, since $t=1/2$ is a zero of $b_{g^1_{v,w}\mid AC}(t)$, nonnegativity of the restrictions of $q_i$'s requires that $t=1/2$ is also a zero of $b_{q_i\mid AC}(t)$ and hence a minimizer in interval $[1/2,2]$.  Thus we have \(b_{q_i\mid AC}'(1/2)\geq0\) and
	\[
	b_{q_1\mid AC}'(\tfrac12)+b_{q_2\mid AC}'(\tfrac12)=b'_{g^1_{v,w}\mid AC}(\tfrac12) =0.
	\]
	Hence both derivatives vanish.  Since each \(b_{q_i\mid AC}\) has degree at most
	two, it follows that
	\[
	b_{q_i\mid AC}(t)=\lambda_i(t-\tfrac12)^2,
	\qquad
	\lambda_i\geq0.
	\]
	The difference \(q_i-\lambda_i g^1_{v,w}\) vanishes on \(AC\),
	at \(B\), and to second order at \((v,v^{-1})\).  If
	\(v>1/2\), Lemma~\ref{lem:contact-uniqueness}\textup{(i)}
	makes the difference zero.  If \(v=1/2\), the curved restriction of
	\(g^1_{v,w}\) has multiplicity three there because
	\(w<(2+3v)/4\) implies
	\(\alpha_{v,w}>1\).  The same derivative
	argument as for \(f_{1/2}\) yields multiplicity at least three for every summand.
	Lemma~\ref{lem:contact-uniqueness}\textup{(ii)} again makes the
	difference zero.  Thus \(q_i=\lambda_i g^1_{v,w}\).  Coordinate exchange
	treats \(g^2_{v,w}\), completing the proof.
\end{proof}

\subsection{The PSD case}

In this subsection, we study extreme rays $q_{R,\vr,p}$ with $R\succeq 0$. Theorem \ref{thm:main} states all such extreme rays are affine squares $\ell^2$ where $\ell$ is a nonzero affine function whose zero set contains two
distinct points of \(\Gset\). In the following lemma, we show that an affine squares $\ell^2$ is extremal if and only if the such condition holds. 

\begin{lemma}\label{lem:square-criterion}
Let \(\ell\) be a nonzero affine function.  Then \(\ell^2\) spans an
extreme ray of \(\Ncone(\Gset)\) if and only if the zero set of $\ell$ contains two
distinct points of \(\Gset\).
\end{lemma}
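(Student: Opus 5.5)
The "if" direction is already contained in the proof of Proposition~\ref{prop:candidates}. If \(\ell(\vx)=\ell(\vy)=0\) for distinct \(\vx,\vy\in\Gset\), then any decomposition \(\ell^2=q_1+q_2\) with \(q_i\in\Ncone(\Gset)\) has the following properties:
\begin{itemize}
\item each \(q_i\) vanishes at \(\vx\) and \(\vy\);
\item since \(\Gset\) has two linearly independent feasible directions at every point, \(\nabla q_i(\vx)=\nabla q_i(\vy)=0\);
\item \(R_i(\vy-\vx)=0\), so \(q_i(\vz)=(\vz-\vx)^\top R_i(\vz-\vx)\) with \(R_i\) of rank at most one.
\end{itemize}
Hence \(q_i\) is a nonnegative multiple of \(\ell^2\). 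We simply cite that argument. The content of the lemma is therefore the converse.

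For the ``only if'' direction, assume the zero line \(L=\{\ell=0\}\) meets \(\Gset\) in at most one point, and exhibit a nontrivial decomposition of \(\ell^2\) in \(\Ncone(\Gset)\). Write \(\ell=\vnu^\top\vx+c\).

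\textbf{Case \(\ell\) constant.} Then \(\ell^2\) is a positive constant and Proposition~\ref{prop:has-zero} already rules it out. More generally, if \(L\cap\Gset=\emptyset\), then \(\ell^2>0\) on the compact set \(\Gset\), and Proposition~\ref{prop:has-zero} applies directly.

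\textbf{Case \(L\cap\Gset=\{\vx_0\}\).} Since \(\Gset\) is connected and \(\ell\) vanishes on it only at \(\vx_0\), \(\ell\) does not change sign on \(\Gset\). After replacing \(\ell\) by \(-\ell\), assume \(\ell\ge0\) on \(\Gset\). By \eqref{eq:affine-cone}, \(\ell=a\ell_{AB}+b\ell_{AC}+c'\ell_{BC}\) with \(a,b,c'\ge0\). Then
\[
\ell^2=a\,\ell\,\ell_{AB}+b\,\ell\,\ell_{AC}+c'\,\ell\,\ell_{BC},
\]
and each term is a product of affine functions nonnegative on \(\Gset\), hence lies in \(\Ncone(\Gset)\). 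This decomposition is nontrivial unless exactly one coefficient is nonzero, i.e.\ unless \(\ell\) is a positive multiple of one of \(\ell_{AB},\ell_{AC},\ell_{BC}\). Those lines meet \(\Gset\) in a segment or (for \(\ell_{BC}\)) in the two points \(B,C\), so they contain two points of \(\Gset\) and are excluded.

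The point to verify is that \(\ell\ell_{AB}\), for example, is not proportional to \(\ell^2\). This holds because \(\ell\) is not proportional to \(\ell_{AB}\), and affine functions form a unique-factorization setting: \(\ell\ell_{AB}=\lambda\ell^2\) would force \(\ell_{AB}=\lambda\ell\). Since at least two of \(a,b,c'\) are nonzero, \(\ell^2\) is a sum of two non-proportional elements of \(\Ncone(\Gset)\) and is not extreme.

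The main subtlety is this single-touch case; in particular one must confirm that \(\ell\ge0\) on \(\Gset\) follows (via connectedness), so that \eqref{eq:affine-cone} applies. Once that is established, the argument closes. The no-zero case needs nothing beyond Proposition~\ref{prop:has-zero}.
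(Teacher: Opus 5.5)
Your proof is correct and follows the paper's argument. You cite Proposition~\ref{prop:candidates} for the ``if'' direction. For the converse, you write $\ell\ge 0$ as a conic combination of $\ell_{AB},\ell_{AC},\ell_{BC}$ with at least two positive coefficients, which gives a nontrivial conic decomposition of $\ell^2$. Factoring out one copy of $\ell$ instead of expanding the square, and sending the no-contact case to Proposition~\ref{prop:has-zero}, are only cosmetic differences.

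One small precision: the sign claim needs connectedness of $\Gset\setminus\{\vx_0\}$, not merely of $\Gset$. This holds because $\Gset$ is a closed topological disk. Equivalently, as the paper puts it, $\ell$ cannot vanish at an interior point, and $\operatorname{int}(\Gset)$ is connected and dense in $\Gset$.
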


\begin{proof}
	The extremality of affine squares \(\ell^2\) whose zero set of $\ell$ contains two
	distinct points of \(\Gset\) has been proven in Lemma \ref{prop:candidates}.		
	Suppose that $\ell$ is a nonzero affine function whose zero set contains at most one point of
	\(\Gset\). To finish the proof it suffices to show that $\ell^2$ is not an extreme ray.
	
	With at most one zero in $\Gset$, the zero set of $\ell$ cannot pass through the interior of \(\Gset\). 
	Up to exchange $\ell$ and $-\ell$, we have 
	\(\ell\geq0\) on \(\Gset\).  By
	\eqref{eq:affine-cone}, we have
	\[
	\ell=a\ell_{AB}+b\ell_{AC}+c\ell_{BC} \text{ for some }
	a,b,c\geq0.
	\]
	At least two coefficients are positive; otherwise $\ell$ is a nonnegative multiple of $\ell_{AB}$, $\ell_{AC}$, or $\ell_{BC}$ and the zero set contains at least two distinct points in $\Gset$. We then have a nontrivial conic decomposition
	\[
	\begin{aligned}
		\ell^2={}&a^2\ell_{AB}^2+b^2\ell_{AC}^2+c^2\ell_{BC}^2
		+2ab\ell_{AB}\ell_{AC}+2ac\ell_{AB}\ell_{BC}+2bc\ell_{AC}\ell_{BC}.
	\end{aligned}
	\]
	Hence
	\(\ell^2\) is not an extreme ray.
\end{proof}

We are now ready to finish the classification of extreme rays in the PSD case. 

\begin{proposition}\label{prop:psd}
Every extreme ray \(q_{R,\vr,p}\in\Ncone(\Gset)\) with \(R\succeq0\) is
an extreme affine-square ray listed Theorem~\ref{thm:main}\textup{(i)}.
\end{proposition}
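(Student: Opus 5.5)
Let \(q=q_{R,\vr,p}\) span an extreme ray of \(\Ncone(\Gset)\) with \(R\succeq0\). The plan has three steps: show that \(q\) is a sum of affine squares, use extremality to reduce to a single square, and then invoke Lemma~\ref{lem:square-criterion}.

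\textbf{Step 1: a zero and a vanishing gradient.} By Proposition~\ref{prop:has-zero}, \(q\) vanishes at some \(\vx_0\in\Gset\). Since \(R\succeq0\), \(q\) is convex on \(\R^2\). If \(\nabla q(\vx_0)=0\), then \(\vx_0\) is a global minimizer, so
\[
q(\vx)=(\vx-\vx_0)^\top R(\vx-\vx_0).
\]
Diagonalizing \(R\) writes \(q\) as a sum of at most two squares of affine functions, each vanishing at \(\vx_0\), and each square lies in \(\Ncone(\Gset)\).

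\textbf{Step 2: reduce to a single square.} By extremality each summand is a nonnegative multiple of \(q\). Since the two squares \((\mathbf e_i^\top(\vx-\vx_0))^2\) in an eigenbasis are linearly independent when \(R\) has rank two, \(R\) must have rank one. Hence \(q=\ell^2\) for an affine \(\ell\). Lemma~\ref{lem:square-criterion} then forces the zero set of \(\ell\) to contain two distinct points of \(\Gset\), which is exactly item (i).

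\textbf{Step 3: the main obstacle, \(\nabla q(\vx_0)\neq0\).} In this case the convexity of \(q\) and \(q\ge0\) on \(\Gset\) mean that \(\vx_0\) is a boundary point and that \(\ell_0(\vx):=\langle\nabla q(\vx_0),\vx-\vx_0\rangle\) is a nonzero affine function nonnegative on \(\Gset\). Indeed, the first-order optimality of \(\vx_0\) over \(\Gset\) gives nonnegativity along feasible directions. Since \(\Gset\) is star-shaped with respect to boundary points in the relevant sense, this holds on all of \(\Gset\) after checking cone conditions; the safer route is to use \(\conv(\Gset)\), because a minimizer of the convex function \(q\) over \(\Gset\) with value zero need not minimize over the triangle.

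I would try the decomposition
\[
q=\bigl(q-\ell_0\bigr)+\ell_0 .
\]
Here \(q-\ell_0=(\vx-\vx_0)^\top R(\vx-\vx_0)\) is PSD, vanishes at \(\vx_0\), and lies in \(\Ncone(\Gset)\). To place \(\ell_0\) in \(\Ncone(\Gset)\), I would show \(\ell_0\ge0\) on \(\Gset\). Because \(\nabla q(\vx_0)\) lies in the normal cone of \(\Gset\) at \(\vx_0\), this holds automatically when \(\vx_0\) lies on \(AB\), on \(AC\), or at a vertex: there the tangent cone contains \(\Gset-\vx_0\), since \(\Gset\) lies on one side of the supporting lines \(x_i=\tfrac12\). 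For \(\vx_0\) in the relative interior of \(\Gamma\), the feasible directions form a half-plane bounded by the tangent line, and \(\Gset\) lies on the concave side of that tangent. The gradient condition would then only give \(\ell_0\ge0\) on that side, which is the wrong side. I expect to rule this case out directly: convexity gives
\[
q(\vx)\ge\ell_0(\vx)
\]
for all \(\vx\), but the tangent half-plane at \((u,u^{-1})\) containing \(\Gset\)\dots does not help. Instead I would note that for a convex \(q\), the set \(\{q\le0\}\) is convex and touches \(\Gset\) only at boundary points. It must therefore be separated from \(\operatorname{int}\Gset\) by a line through \(\vx_0\), and near an interior point of the strictly curved arc \(\Gamma\) no such line exists, since the tangent line enters \(\Gset\). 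Hence \(\{q<0\}\) is empty near \(\vx_0\), so \(\nabla q(\vx_0)=0\), contradicting the case assumption.

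\textbf{Closing the case.} Once \(\ell_0\in\Ncone(\Gset)\), write \(\ell_0\) as a conic combination of non-multiples of \(q\) via \eqref{eq:affine-decomposition}. The resulting decomposition of \(q\) is nontrivial, because \(q-\ell_0\) is a nonnegative multiple of \(q\) only if \(\ell_0\) is as well, which is impossible since \(\ell_0\) is affine and nonconstant while \(q\) is genuinely quadratic or affine. The degenerate subcase \(R=0\), where \(q\) is affine, is handled directly by \eqref{eq:affine-decomposition}. This contradicts extremality, so \(\nabla q(\vx_0)=0\) and Steps 1 and 2 apply.
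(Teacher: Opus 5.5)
\textbf{Gap: contacts on $\Gamma$, including $B$ and $C$.} Your Steps 1--2 are sound and match the paper. So is the part of Step 3 for a contact on $AB^\circ$, $AC^\circ$ or at $A$: there the KKT conditions give $\ell_0=\lambda\ell_{AB}$ or $\ell_0=\lambda_1\ell_{AB}+\lambda_2\ell_{AC}$, and $q=(q-\ell_0)+\ell_0$ contradicts extremality. The argument breaks for every contact on $\Gamma$.

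At $B$ and $C$, your claim that $\Gset-\vx_0$ lies in the tangent cone is false. The constraint $x_1x_2\le1$ is active there, and the tangent line to the hyperbola cuts through $\Gset$. For example, $q=(x_1+\tfrac14x_2-1)^2+\tfrac12\ell_\Gamma$ has diagonal $R$ with entries $1,\tfrac1{16}$, is nonnegative on $\Gset$, and vanishes at $B$. Its linearization is $\ell_0=1-x_1-\tfrac14x_2$, which equals $-\tfrac98$ at $C$.

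On $\Gamma^\circ$ your separation argument fails for the same reason. The convex region $\{x_1x_2>1,\ x_1,x_2>0\}$ is disjoint from $\Gset$, yet its closure contains $\Gamma$. So a convex set $\{q<0\}$ can touch $\Gset$ at an interior point of the arc with no separating line. Concretely, $q=(x_1-x_2)^2+4(2-x_1-x_2)=(x_1+x_2-2)^2+4\ell_\Gamma$ is convex and nonnegative on $\Gset$. It vanishes at $(1,1)$ with gradient $(-4,-4)$, and $q(1+s,1+s)=-8s<0$ for $s>0$. Thus your conclusion $\nabla q(\vx_0)=0$ is false, and no affine $\ell_0$ can be split off at such contacts.

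\textbf{Missing idea: subtract the hyperbola ray.} Both examples point to subtracting a small multiple of $\ell_\Gamma$ rather than an affine function. The paper proceeds as follows.
\begin{enumerate}
\item It removes contacts on $AB^\circ$, $AC^\circ$ and at $A$ as you do, so all zeros lie on $\Gamma$.
\item It notes $P_q\not\equiv0$, since otherwise $q$ would be a multiple of $\ell_\Gamma$, which is not PSD. Hence there are finitely many zeros.
\item It shows the KKT multiplier of $\ell_\Gamma$ is strictly positive at each zero. At $B$, a zero multiplier would give $q=\lambda_{AB}\ell_{AB}+(\vx-B)^\top R(\vx-B)$ with $\lambda_{AB}>0$, contradicting extremality; $C$ is analogous.
\item It deduces $\partial_2q\le-c<0$ near each zero in $\Gamma^\circ\cup\{B\}$, and $\partial_1q\le-c$ near $C$.
\item It integrates from $(x_1,x_2)$ up to $(x_1,x_1^{-1})$ to get $q\ge\tfrac c2\ell_\Gamma$ locally.
\end{enumerate}
Compactness then gives a global bound $\ell_\Gamma\le c'q$ on $\Gset$. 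Hence $q=\eps\ell_\Gamma+(q-\eps\ell_\Gamma)$ is a decomposition inside $\Ncone(\Gset)$, which contradicts extremality because $\ell_\Gamma$ does not have a PSD quadratic part.
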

\begin{proof}
	Consider any extreme ray \(q:=q_{R,\vr,p}\in\Ncone(\Gset)\) with \(R\succeq0\).
	By Proposition~\ref{prop:has-zero}, the quadratic \(q\) has at least one
	zero.
	We will proceed the proof in the following steps.  We first show that if \(q\) has a zero at which its gradient vanishes,
	then \(q\) spans an affine-square ray.  In particular, this conclusion
	holds whenever \(q\) has a zero in the interior of \(\Gset\).  We may
	therefore assume that every zero of \(q\) has nonzero gradient.  Under
	this assumption, we next show that extremality precludes zeros in
	\(AB^\circ\), \(AC^\circ\), at \(A\), or on $\Gamma$, finishing the proof.
	
	Suppose first that \(q(\vz)=0\) and \(\nabla q(\vz)=0\) for some
	\(\vz\in\Gset\).  By Taylor expansion we must have
	\(
	q(\vx)=(\vx-\vz)^\top R(\vx-\vz).
	\)
	Rank-one decomposition of the \(R\) yields $q(\vx) = \ell^2(\vx) + {\widetilde\ell}^2(\vx)$ where $\ell^2$ and ${\widetilde\ell}^2$ are affine square rays. Extremality forces $q$ to be exactly one of the affine square rays.
	
	From now on we may
	assume that every zero of \(q\) has nonzero gradient. In particular, $q$ does not have any zero in the interior of $\Gset$. Next, suppose that \(q\) has a zero \(\vz\) in \(AB^\circ\). In such case, by KKT condition there exists nonnegative $\lambda$ such that 
	\(\nabla q(\vz)=\lambda\nabla\ell_{AB}\). Indeed, recalling that $q$ has nonzero gradient at $\vz$, we have $\lambda>0$. Moreover, note that $q - \lambda \ell_{AB}\in \Ncone(\Gset)$ since it is convex and has a stationary point $\vz$ with zero function value. 
	Now we have conic decomposition $q = (q - \lambda\ell_{AB}) + \lambda\ell_{AB}$, in which $\ell_{AB}$ is not an extreme ray as stated in \eqref{eq:affine-decomposition}. Such decomposition contradicts with the assumption that $q$ is an extreme ray. The argument applies  by symmetry to the case when a
	zero of $q$ is on $AC^\circ$. The argument is also applicable to the case when a zero of $q$ is the vertex $A$ (by subtracting a conic combination of $\ell_{AB}$ and $\ell_{AC}$ from $q$ instead of only subtracting $\lambda \ell_{AB}$), in which case by KKT condition the gradient of $q$ at $A$ is a nonnegative combination of \(\nabla\ell_{AB}\) and
	\(\nabla\ell_{AC}\).
	
	It remains to consider the case when all zeros of \(q\) lie on
	\(\Gamma\).  The polynomial \(P_q\) cannot vanish identically.  Indeed,
	coefficient comparison in \eqref{eq:Pq} would then show that \(q\) is
	proportional to \(\ell_\Gamma\), whose nonzero multiples do not have
	positive-semidefinite quadratic parts.  Thus \(q\) has only finitely many
	zeros on \(\Gamma\). Note that any zero on $\Gamma$ is a minimizer of $q$ on $\Gset$, and hence by KKT condition has a Lagrange multiplier $\lambda$ associated to the constraint $-\ell_\Gamma\le 0$. We will show that for any zero on $\Gamma$, such Lagrange multiplier is strictly positive. For the case when \(\vz\in\Gamma^\circ\), 
	\(\ell_\Gamma\geq0\) is the only active constraint at \(\vz\) and the KKT
	condition yields
	\(
		\nabla q(\vz)
		=
		\lambda_\Gamma\nabla\ell_\Gamma(\vz),\) in which the gradient of \(q\) is nonzero at \(\vz\), so
	\(\lambda_\Gamma>0\).
	For the case when \(B\) is a zero of \(q\), the KKT condition yields $
		\nabla q(B)
		=
		\lambda_{AB}\nabla\ell_{AB}(B)
		+\lambda_\Gamma\nabla\ell_\Gamma(B)$ with multipliers $
		\lambda_{AB},\lambda_\Gamma\geq0.
	$
	If \(\lambda_\Gamma=0\), then \(\lambda_{AB}>0\) because
	\(\nabla q(B)\neq0\).  Performing Taylor expansion of $q$ at \(B\) we have a conic decomposition $
		q(\vx)
		=
		\lambda_{AB}\ell_{AB}(\vx)
		+(\vx-B)^\top R(\vx-B)
	$ in which $\lambda_{AB}>0$. Extremality would force \(q\) 
	to be proportional to \(\ell_{AB}\), which does not span an extreme ray
	due to \eqref{eq:affine-decomposition}, a contradiction. Therefore
	\(\lambda_\Gamma>0\).  A similar argument holds at \(C\).  Consequently, the KKT multiplier associated with
	\(-\ell_\Gamma\le 0\) is positive at every zero of \(q\).

	We will now use those positive multipliers to compare \(\ell_\Gamma\) with
	\(q\) near its zeros.  Fix any zero
	\(\vz=(u,u^{-1})\in\Gamma^\circ\cup\{B\}\), where $u\in [1/2,2)$.  The KKT relation yields
	\(
		\partial_2q(\vz)
		=
		\lambda_\Gamma\partial_2\ell_\Gamma(\vz)
		=
		-u\lambda_\Gamma
		<0.
	\)
	Note that the above also holds at \(B\) since \(\partial_2 \ell_{AB}(\vz) = 0\). By continuity, there exist a
	neighborhood of \(\vz\) in $\Gset$ and \(c_{\vz}>0\) such that
	\(\partial_2q\leq-c_{\vz}\) throughout the neighborhood.  For any nearby
	\((x_1,x_2)\in\Gset\), we have \(x_2\leq x_1^{-1}\) and 
	\[
	\begin{aligned}
		q(x_1,x_2)
		&=
		q(x_1,x_1^{-1})
		-\int_{x_2}^{x_1^{-1}}\partial_2q(x_1,t)\,dt\ge c_{\vz}\int_{x_2}^{x_1^{-1}}\,dt
		=
		\frac{c_{\vz}}{x_1}\ell_\Gamma(x_1,x_2) \ge \frac{c_{\vz}}{2} \ell_\Gamma(x_1,x_2),
	\end{aligned}
	\]
	where we use \(q(x_1,x_1^{-1})\geq0\) and \(\partial_2q\leq-c_{\vz}\) in the first inequality.  Thus \(\ell_\Gamma(\vx)\leq2q(\vx)/c_{\vz}\) near
	\(\vz\).
	At the vertex \(C\), the KKT relation
	yields
	\(
		\partial_1q(C)
		=
		\lambda_\Gamma\partial_1\ell_\Gamma(C)
		=
		-\lambda_\Gamma/2
		<0.
	\)
	By a similar argument, we can show that for some \(c_C>0\), every nearby point
	\((x_1,x_2)\in\Gset\) in a small neighborhood of $C$ satisfies $\partial_1q\le -c_C$ and hence
	\[
	\begin{aligned}
		q(x_1,x_2)
		&=
		q(x_2^{-1},x_2)
		-\int_{x_1}^{x_2^{-1}}\partial_1q(t,x_2)\,dt
		\geq
		c_C(x_2^{-1}-x_1)
		=
		\frac{c_C}{x_2}\ell_\Gamma(x_1,x_2) \ge \frac{c_C}{2}\ell_\Gamma(x_1,x_2)
	\end{aligned}
	\]
	Since \(x_2\leq2\), this proves
	\(\ell_\Gamma(\vx)\leq2q(\vx)/c_C\) near the vertex \(C\).

	Outside the above small neighborhoods near the finite number of zeros, on the
	remaining compact subset of \(\Gset\), the function $\ell_\Gamma/q$ has a maximum. Overall, there exists a constant $c>0$ such that \(
		\ell_\Gamma(\vx)\leq cq(\vx)
	\) for all $\vx\in\Gset$. Choosing \(0<\varepsilon\leq1/c\) we then have conic combination
	\(
		q
		=
		\varepsilon\ell_\Gamma
		+\bigl(q-\varepsilon\ell_\Gamma\bigr),
	\) where $\ell_\Gamma, q-\varepsilon\ell_\Gamma\in\Ncone(\Gset)$. Extremality would force
	\(q\) to be proportional to \(\ell_\Gamma\), contradicting
	\(R\succeq0\).  This completes the proof.
\end{proof}

\subsection{The non-PSD case: boundary contacts and multiplicity-dominance principle}
\label{sec:tools}

We move to the non-PSD case and start classifying all extreme rays
$q_{R,\vr,p}$ with $R\not\succeq0$.  As stated in the road map in Subsection~\ref{sec:roadmap},
all zeros of $q_{R,\vr,p}$ have to lie on the boundary $\partial\Gset$: an
interior zero would be a local minimizer, and the second-order necessary
condition would give $R\succeq0$.  From here onward, we call a point
\(\vz\in\partial\Gset\) a \emph{contact} of a quadratic \(q\) with
\(\Gset\) if \(q(\vz)=0\).  Proposition~\ref{prop:has-zero} establishes
that every non-PSD extreme ray has at least one zero.  Thus, in the non-PSD case,
the zeros of an extreme ray are precisely its contacts.

The purpose of this subsection is to convert this geometric contact
information into a sufficient certificate for non-extremality.  By
Lemma~\ref{lem:boundary-reduction-bounded}, global nonnegativity for a
quadratic $q_{R,\vr,p}$ with $R\not\succeq 0$ is determined by the three
univariate boundary restrictions in \eqref{eq:Pq}.  In this
subsection, we establish a stronger conclusion: the zeros and
multiplicities of these restrictions form a boundary-contact profile of
$q$.
After isolating the infinite-contact case, we encode this profile by
defining contact multiplicity vectors and using them to formulate the multiplicity-dominance principle in Lemma~\ref{lem:subtraction}.  This principle turns contact profile into a conic
decomposition and drives the complete non-PSD classification in the next
subsection.

We first treat the case of infinitely many contacts.  Separating this
case is necessary because the multiplicity-dominance principle relies on the multiplicities of 
nonzero boundary restrictions; an infinite contact set instead forces at
least one of those restrictions to vanish identically.

\begin{proposition}\label{prop:infinite-zeros}
Let \(q=q_{R,\vr,p}\) span an extreme ray of \(\Ncone(\Gset)\), and
suppose that \(R\not\succeq0\).  If \(q\) has infinitely many contacts,
then it is either a convex hull boundary-product ray or the hyperbola ray listed in
Theorem~\ref{thm:main}.
\end{proposition}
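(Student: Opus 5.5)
Since \(R\not\succeq0\), every zero of \(q\) lies on \(\partial\Gset=AB\cup AC\cup\Gamma\). Infinitely many contacts therefore means that one of the three boundary pieces carries infinitely many zeros. The corresponding restriction \(b_{q\mid AB}\), \(b_{q\mid AC}\), or \(P_q\) from \eqref{eq:Pq} is a nonzero polynomial of degree at most four unless it vanishes identically, so one of them must be the zero polynomial. I would split into these cases and, in each, factor out the known vanishing and exhibit a decomposition that extremality forces to be trivial.

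\emph{Case \(P_q\equiv0\).} Comparing coefficients in \eqref{eq:Pq} gives \(R_{11}=R_{22}=r_1=r_2=0\) and \(p=-2R_{12}\). Hence \(q=-2R_{12}(1-x_1x_2)=-2R_{12}\ell_\Gamma\). Nonnegativity at \(A\), where \(\ell_\Gamma>0\), gives \(-2R_{12}\ge0\), and \(q\neq0\). So \(q\) is a positive multiple of the hyperbola ray.

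\emph{Case \(b_{q\mid AB}\equiv0\).} Then \(q\) vanishes on the line \(x_1=\tfrac12\), so \(q=\ell_{AB}\,a\) for an affine function \(a\). Since \(\ell_{AB}>0\) on \(\Gset\setminus AB\), continuity shows that \(a\ge0\) on \(\Gset\). By \eqref{eq:affine-cone}, \(a=\alpha\ell_{AB}+\beta\ell_{AC}+\gamma\ell_{BC}\) with \(\alpha,\beta,\gamma\ge0\), which gives
\[
q=\alpha\ell_{AB}^2+\beta\ell_{AB}\ell_{AC}+\gamma\ell_{AB}\ell_{BC},
\]
a sum of members of \(\Ncone(\Gset)\). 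The term \(\ell_{AB}^2\) has PSD quadratic part, so it cannot be the surviving term for a non-PSD \(q\). Extremality therefore forces \(\alpha=0\) and exactly one of \(\beta,\gamma\) to be positive. Thus \(q\) is a multiple of \(\ell_{AB}\ell_{AC}\) or of \(\ell_{AB}\ell_{BC}\), both of which are convex hull boundary-product rays. The case \(b_{q\mid AC}\equiv0\) follows by the coordinate exchange \(x_1\leftrightarrow x_2\).

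The only delicate point is the step showing that \(a\ge0\) on all of \(\Gset\), including on \(AB\) itself; continuity handles it, because \(\Gset\setminus AB\) is dense in \(\Gset\). I do not expect a real obstacle beyond checking that each summand in the decomposition lies in \(\Ncone(\Gset)\), and this is immediate because products of functions nonnegative on \(\Gset\) are nonnegative on \(\Gset\).
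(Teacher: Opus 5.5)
Your proposal is correct and follows essentially the same route as the paper. One boundary restriction must vanish identically. In the \(\Gamma\) case you obtain \(q=-2R_{12}\ell_\Gamma=p\,\ell_\Gamma\). In the flat-side case you factor \(q=\ell_{AB}a\), expand \(a\) via \eqref{eq:affine-cone}, and let extremality together with \(R\not\succeq0\) single out a boundary-product ray. The only cosmetic difference is that you get \(a\ge0\) from \(q\ge0\) and \(\ell_{AB}>0\) off \(AB\), while the paper uses strict positivity of \(q\) at interior points; both then conclude by continuity.
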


\begin{proof}
The boundary $\partial \Gset=AB\cup AC\cup\Gamma$ consists of three pieces, so at least one of them contains infinitely many
contacts.  If $AB$ contains infinitely many contacts, then \(b_{q\mid AB}(t) = q(1/2, t)\) has infinitely many zeros and hence vanishes
identically for all $t$. By direct computation we have $q(\vx) = q(\vx) - q(1/2,x_2) = \ell_{AB}(\vx)\ell(\vx)$ where $\ell(\vx)$ is an affine function. At any interior point $\vx$, we have $q(\vx)>0$, hence $\ell(\vx)>0$. By continuity we have $\ell(\vx)\ge 0$ for all $\vx\in\Gset$. 
Consequently, \eqref{eq:affine-cone} yields $\ell\in \cone\{\ell_{AB},\ell_{AC},\ell_{BC}\}$ and hence
\[
	q\in
	\cone\Set{
		\ell_{AB}^2,
		\ell_{AB}\ell_{AC},
		\ell_{AB}\ell_{BC}
	}.
\]
Extremality and the assumption
\(R\not\succeq0\) force $q$ to be spanned by one of the two convex hull boundary-product rays above. By symmetry we have a similar result for the case when \(AC\) contains infinitely many contacts.  Finally, if $\Gamma$ contains infinitely many contacts, i.e., if \(P_q\) has infinitely many zeros, then $P_q$ vanishes for any $t$ and hence is the
zero polynomial.  Noting that all the coefficients of $P_q$ in \eqref{eq:Pq} are $0$, we can observe that $q(\vx) = p(1-x_1x_2)$. Nonnegativity of $q$ guarantees that $q$ is spanned by the hyperbola ray.
\end{proof}

With the help of Proposition~\ref{prop:infinite-zeros}, it suffices to study only extreme rays with finite number of contacts in the non-PSD case. Let us fix one such extreme ray.  We
call
\begin{equation}\label{eq:zero-set}
	Z(q):=\Set{\vz\in\partial\Gset\mid q(\vz)=0}
\end{equation}
the \emph{contact set} of \(q\).  Its cardinality, denoted by \(\card(Z(q))\), is exactly
the number of contacts of \(q\).  Since \(Z(q)\) is finite, none of the
three boundary restrictions in \eqref{eq:Pq} vanishes
identically. For each contact \(\vz=(z_1,z_2)\in Z(q)\), at least one of the following happens: \(z_2\) is a root of \(b_{q\mid AB}\) when \(\vz\in AB\),
\(z_1\) is a root of \(b_{q\mid AC}\) when \(\vz\in AC\), or \(z_1\) is a root
of \(P_q\) when \(\vz\in\Gamma\). Recalling that any root of a univariate function has a multiplicity associated to it, we denote by \(\boldsymbol{\nu}_q(\vz)=([\vnu_q(\vz)]_{AB}, [\vnu_q(\vz)]_{AC}, [\vnu_q(\vz)]_{\Gamma})^\top\) the \emph{contact
	multiplicity vector} of \(q\) at \(\vz\), in which $[\vnu_q(\vz)]_{AB}$, $[\vnu_q(\vz)]_{AC}$, and $[\vnu_q(\vz)]_{\Gamma}$ are the multiplicities of $z_2$, $z_1$, and $z_1$ as roots of \(b_{q\mid AB}\), \(b_{q\mid AC}\), and  \(P_q\), respectively. 
	We set the multiplicity to $0$ if the associated value is not a root or if $\vz$ is not on the boundary piece. 
	
	As an example of contact set and contact multiplicity vector, consider the bounded tangent extreme ray 
	\[
	q(\vx):=f_1(\vx) = (x_1-\tfrac12)(2-x_1)
	+(x_2-\tfrac12)(2-x_2)
	-4(x_1-\tfrac12)(x_2-\tfrac12).
	\]
	By direct computation we know that $q$ is nonconvex and 
	\[
	\begin{aligned}
	b_{q\mid AB}(t)=(t-\tfrac12)(2-t),\
	b_{q\mid AC}(t)=(t-\tfrac12)(2-t),\
	P_q(t)=(t-\tfrac12)(2-t)(t-1)^2.
	\end{aligned}
	\]
	Therefore,
	\(
	Z(q)=\{A,B,C,G_1\}
	\) has cardinality $4$ and includes the three vertices $A$, $B$, $C$, and a point $G_1=(1,1)^\top$ on $\Gamma$. The contact multiplicity vector at contact $A$ is $\vnu_q(A) = (1,1,0)^\top$, in which the last component is $0$ since $A\not\in\Gamma$. Note that $A$ is on both $AB$ and $AC$ and hence contributes to two nonzero components in the contact multiplicity vector. 
	
It should be noted that the above definition of contact multiplicity vector can be applied to any quadratic \(h\) and any point \(\vz\in\partial\Gset\). A component is zero if
either \(\vz\) does not lie on the corresponding boundary piece or if the
corresponding restriction does not vanish at the associated value fo $\vz$, and is $\infty$ if $h$ vanishes at the associated boundary piece. For example, with $h(\vx) = -x_1^2$ and $\vz = (1,1)^\top$ we have $\vnu_h(\vz) = (0,0,0)^\top$, and with $\ell_\Gamma(\vx) = 1 - x_1x_2$ we have $\vnu_{\ell_\Gamma}(B) = (1, 0, \infty)^\top$.

In the following multiplicity-dominance principle lemma, we show that the relationship between contact multiplicity vectors contains important information that can be used to certify non-extremality. 
\begin{lemma}[Multiplicity-dominance principle]\label{lem:subtraction}
Suppose that \(q:=q_{R,\vr,p}\in\Ncone(\Gset)\) satisfies $R\not \succeq 0$ and $\card(Z(q))<\infty$, and that $h\in\Ncone(\Gset)$ is any nonzero ray. If
\begin{align}
	\label{eq:multiplicity-dominance}
	\boldsymbol{\nu}_h(\vz)
	\geq\boldsymbol{\nu}_q(\vz)
	\qquad
	\text{componentwise for every }\vz\in Z(q),
\end{align}
then \(q-\eps h\in\Ncone(\Gset)\) for all sufficiently small
\(\eps>0\).  Consequently, if \(q\) spans an extreme ray, then \(h\) is
spanned by \(q\).
\end{lemma}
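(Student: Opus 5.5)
The plan is to reduce everything to the three univariate boundary restrictions and use Lemma~\ref{lem:boundary-reduction-bounded}. Note first that \(q-\eps h\) has quadratic part \(R-\eps R_h\), which still has a negative direction for all sufficiently small \(\eps>0\): if \(d^\top Rd<0\), then \(d^\top(R-\eps R_h)d<0\) for small \(\eps\). Hence it suffices to show that \(b_{q-\eps h\mid AB}\), \(b_{q-\eps h\mid AC}\), and \(P_{q-\eps h}\) are nonnegative on \([\tfrac12,2]\). These restrictions are linear in the quadratic, so \(b_{q-\eps h\mid AB}=b_{q\mid AB}-\eps b_{h\mid AB}\), and similarly for the other two. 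The problem therefore splits into three independent univariate statements of the following form. Let \(\phi\ge0\) be a nonzero polynomial on \([\tfrac12,2]\) with finitely many zeros, and let \(\psi\ge0\) on \([\tfrac12,2]\). Suppose that at each zero \(t_0\) of \(\phi\) in \([\tfrac12,2]\), either \(\psi\) vanishes identically or \(\psi\) vanishes at \(t_0\) to order at least \(\operatorname{mult}_{t_0}\phi\). Then \(\phi-\eps\psi\ge0\) on the interval for small \(\eps\).

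To apply this, I will check that the hypothesis \eqref{eq:multiplicity-dominance} gives exactly the needed condition for each piece. Take \(AB\) as an example. Each zero \(t_0\) of \(b_{q\mid AB}\) in \([\tfrac12,2]\) corresponds to the contact \(\vz=(\tfrac12,t_0)\in Z(q)\), with \([\vnu_q(\vz)]_{AB}=\operatorname{mult}_{t_0}b_{q\mid AB}\). Dominance then says that \(b_{h\mid AB}\) vanishes at \(t_0\) to at least that order, or is identically zero (multiplicity \(\infty\)). The same holds for \(AC\) and for \(\Gamma\) via \(P_q\), since \(t^2>0\) on the interval. The vertices \(A\), \(B\), \(C\) are handled automatically, because the multiplicity vector carries a separate component for each boundary piece containing the vertex. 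Nonnegativity of \(b_{h\mid\cdot}\) and \(P_h\) follows from \(h\in\Ncone(\Gset)\).

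The univariate lemma is the core step; I will prove it by a division-and-compactness argument. Write \(\phi(t)=\prod_j (t-t_j)^{m_j}\,\tilde\phi(t)\) over the zeros \(t_j\) in the closed interval, so that \(\tilde\phi\) has no zeros on \([\tfrac12,2]\). Then \(\tilde\phi\) has constant sign there, and since \(\phi\ge0\) is nonzero on the interval, \(|\tilde\phi|\) is bounded below by some \(c>0\). By dominance, \(\psi=\prod_j (t-t_j)^{m_j}\,\tilde\psi\) for a polynomial \(\tilde\psi\), so the ratio \(\psi/\phi=\tilde\psi/\tilde\phi\) extends continuously to \([\tfrac12,2]\) and is bounded by some \(M\). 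Then \(\phi-\eps\psi=\phi\,(1-\eps\,\psi/\phi)\ge0\) for \(\eps\le 1/M\). This works at every point of the interval, the zeros \(t_j\) included, by continuity.

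The main subtlety is sign bookkeeping in the factorization. At an interior zero, \(m_j\) is even. At an endpoint, \((t-t_j)^{m_j}\) has fixed sign on the interval, so the ratio argument is sign-agnostic as long as it is applied to \(\psi/\phi\) itself. The pieces with \(\psi\equiv0\) are trivial. This proves the first claim.

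For the consequence, suppose \(q\) is extreme and fix such an \(\eps\). Then \(q=\eps h+(q-\eps h)\) is a decomposition into two members of \(\Ncone(\Gset)\), so extremality forces \(\eps h\) to be a nonnegative multiple of \(q\). Since \(h\neq0\), \(h\) is spanned by \(q\).
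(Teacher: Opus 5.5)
Your proposal is correct and takes essentially the same route as the paper. You bound the ratio of the boundary restrictions of \(h\) and \(q\), using multiplicity dominance at the finitely many contacts and compactness elsewhere. You keep \(R-\eps R_h\not\succeq0\) for small \(\eps\), and you then invoke Lemma~\ref{lem:boundary-reduction-bounded}. Your global factorization \(\phi=\prod_j(t-t_j)^{m_j}\tilde\phi\) simply packages the paper's local boundedness argument at each root more cleanly.
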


\begin{proof}
No boundary restriction of \(q\) is identically zero because its contact
set is finite.  Consider one boundary piece $\partial\in\{AB, AC, \Gamma\}$ that contains a contact of $q$, denote the corresponding boundary 
restrictions by \(q_{\partial}\) and \(h_{\partial}\), and fix a root
\(t_0\) of \(q_{\partial}\). If \(h_{\partial}\) is not identically
zero and \(t_0\) has multiplicity \(m\) for \(q_{\partial}\), then by \eqref{eq:multiplicity-dominance} we have
\begin{align}
	q_{\partial}(t)=(t-t_0)^m a(t)
	\text{ and }
	h_{\partial}(t)=(t-t_0)^n b(t),
	\text{ where }
	n\geq m.
\end{align}
Here $a(t)$ and $b(t)$ are polynomials and \(a(t_0)\neq0\).  Hence
\(h_{\partial}/q_{\partial}=(t-t_0)^{n-m}b/a\) is bounded near
\(t_0\). Note that the above discussion implicitly assumes that \(h_{\partial}\) is not a zero polynomial, but even if it is, the boundedness of quotient \(h_{\partial}/q_{\partial}\) still holds. 
The contacts are the only points when the quotient $h_{\partial}/q_{\partial}$ needs special treatment; away from the finitely many contact, the
quotient is bounded due to compactness of $\partial \Gset$. Applying this argument on the finite number of contacts in all three
boundary pieces yields a common constant \(M>0\) such that
\[
	0\leq h(\vx)\leq Mq(\vx),
	\qquad
	\vx\in\partial\Gset.
\]
For \(0<\eps<M^{-1}\), the quadratic \(q-\eps h\) is nonnegative on the
boundary. We may also choose $\eps$ small enough so that the Hessian of $q - \eps h$ is not PSD. Thus
Lemma~\ref{lem:boundary-reduction-bounded} promotes boundary
nonnegativity to \(q-\eps h\in\Ncone(\Gset)\).  Finally,
\[
	q=(q-\eps h)+\eps h
\]
is a decomposition into elements of \(\Ncone(\Gset)\).  If the first
summand is zero, then \(h\) is already proportional to \(q\).  Otherwise,
extremality places both nonzero summands on the ray spanned by \(q\), and
again \(h\) is proportional to \(q\).
\end{proof}

Here and throughout the paper, any inequality between any two contact multiplicity vectors is always a componentwise inequality. The above multiplicity-dominance principle lemma gives a practical certificate of non-extremality:
it is enough to construct a nonnegative \(h\), not proportional to
\(q\), such that
\(\boldsymbol{\nu}_h(\vz)\geq\boldsymbol{\nu}_q(\vz)\) for every
\(\vz\in Z(q)\). 
We will use this lemma extensively in the following subsection. To demonstrate its use, we apply it to rule out the extremality of all rays with exactly one contact in the non-PSD case. 

\begin{proposition}\label{prop:one-contact-non-psd}
	Suppose that $q:=q_{R,\vr,p}\in\Ncone(\Gset)$ satisfies $R\not\succeq 0$ and $\card(Z(q))=1$. Then $q$ is not an extreme ray. 
\end{proposition}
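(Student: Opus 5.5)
The plan is to apply the multiplicity-dominance principle (Lemma~\ref{lem:subtraction}) once, with a single well-chosen comparison ray \(h\). Let \(Z(q)=\{\vz\}\). Since \(R\not\succeq0\) and \(\card(Z(q))=1<\infty\), the lemma applies. It therefore suffices to exhibit a nonzero \(h\in\Ncone(\Gset)\), not proportional to \(q\), such that \(\vnu_h(\vz)\geq\vnu_q(\vz)\) componentwise. The lemma then forces \(h\) to be spanned by \(q\) if \(q\) were extreme, which is a contradiction.

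To choose \(h\), I will split according to the position of \(\vz\) on \(\partial\Gset=AB\cup AC\cup\Gamma\). In each case \(h\) will either vanish identically on every boundary piece containing \(\vz\), giving multiplicity \(\infty\), or have the maximal possible multiplicity on the relevant restriction.

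\emph{Case \(\vz\in AB^\circ\).} Take \(h=\ell_{AB}^2\). Then \([\vnu_h(\vz)]_{AB}=\infty\), and the other two components of \(\vnu_q(\vz)\) are \(0\).

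\emph{Case \(\vz\in AC^\circ\).} Symmetrically, take \(h=\ell_{AC}^2\).

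\emph{Case \(\vz=A\).} Take \(h=\ell_{AB}\ell_{AC}\), which vanishes identically on both \(AB\) and \(AC\). Since \(A\notin\Gamma\), the \(\Gamma\)-component of \(\vnu_q(A)\) is \(0\).

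\emph{Case \(\vz=(u,u^{-1})\in\Gamma^\circ\).} Take \(h=\ell_u^2\), where \(\ell_u:=x_1+u^2x_2-2u\) is the tangent line to the hyperbola at \(\vz\). A direct computation gives \(t\,\ell_u(t,t^{-1})=(t-u)^2\). Hence \(P_h(t)=(t-u)^4\), so \([\vnu_h(\vz)]_\Gamma=4\). This dominates \([\vnu_q(\vz)]_\Gamma\), because \(P_q\) has degree at most four and is not identically zero. The other components of \(\vnu_q(\vz)\) vanish.

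\emph{Case \(\vz=B\), the delicate one.} Here two components of \(\vnu_q(B)\) may be positive: \([\vnu_q(B)]_{AB}\leq2\), since \(b_{q\mid AB}\) is a nonzero quadratic, and \([\vnu_q(B)]_\Gamma\leq4\). Take \(h=\ell_{1/2}^2\) with \(\ell_{1/2}=x_1+\tfrac14x_2-1\), the tangent line to the hyperbola at \(B\). As above, \(P_h(t)=(t-\tfrac12)^4\), giving \(\Gamma\)-multiplicity \(4\). On \(AB\), \(\ell_{1/2}(\tfrac12,t)=\tfrac14(t-2)\), so \(b_{h\mid AB}(t)=\tfrac1{16}(t-2)^2\), giving \(AB\)-multiplicity \(2\). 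Thus \(\vnu_h(B)\geq\vnu_q(B)\).

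\emph{Case \(\vz=C\).} This follows from the case \(\vz=B\) by the coordinate exchange, using \(h=(\tfrac14x_1+x_2-1)^2\).

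In every case \(h\) is nonnegative on \(\Gset\): it is either an affine square or the product \(\ell_{AB}\ell_{AC}\) of two functions nonnegative on \(\Gset\). It also cannot be proportional to \(q\), for two reasons. First, \(q\) has a non-PSD quadratic part, whereas affine squares have PSD quadratic parts. Second, \(\ell_{AB}\ell_{AC}\) has infinitely many contacts, while \(q\) has exactly one.

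The only step I expect to require care is the vertex case, \(\vz=B\) or \(C\). There both the flat and the curved multiplicities can be positive, and the naive choices (\(\ell_{AB}\ell_{BC}\) or \(\ell_\Gamma\)) each dominate only one component. The tangent-line square resolves this because it simultaneously attains the degree bounds on both restrictions. Everything else is a routine multiplicity check.
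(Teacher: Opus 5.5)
Your proof is correct and follows the paper's approach: a single application of Lemma~\ref{lem:subtraction} after a case split on the location of the unique contact, using the same comparison quadratics on $AB^\circ$, $AC^\circ$, $B$, and $C$. The differences are minor and valid. At $A$ you use $\ell_{AB}\ell_{AC}$, and at $G_u\in\Gamma^\circ$ you use the tangent-line square $\ell_u^2$, where the paper uses $(x_1+x_2-1)^2$ and $\ell_\Gamma$. You also rule out proportionality via the PSD quadratic part or the contact count, rather than via non-extremality of $h$.
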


\begin{proof}
	Let the unique contact of \(q\) be \(\vz\).  We will choose $h$ based on the location of $\vz$:
	\begin{itemize}
		\item If $\vz=A$, we set $h(\vx):=(x_1+x_2-1)^2$, so $\vnu_h(A) = (2,2,0)^\top$;
		\item If $\vz = B$, we set $h(\vx):=(x_1+\tfrac14x_2-1)^2$, so $\vnu_h(B) = (2,0,4)^\top$;
		\item If $\vz = C$, we set $h(\vx):=(x_1+4x_2-4)^2$, so $\vnu_h(C) = (0,2,4)^\top$;
		\item If $\vz\in AB^\circ$, we set $h(\vx):=\ell_{AB}^2$, so $\vnu_h(\vz) = (\infty,0,0)^\top$;
		\item If $\vz\in AC^\circ$, we set $h(\vx):=\ell_{AC}^2$, so $\vnu_h(\vz) = (0,\infty,0)^\top$;
		\item If $\vz = (u,1/u)\in \Gamma^\circ$ where $u\in(1/2, 2)$, we set  $h(\vx):=1-x_1x_2$, so $\vnu_h(\vz) = (0,0,\infty)^\top$.
	\end{itemize}
	
	Noting that the degrees of boundary restrictions $b_{q\mid AB}$, $b_{q\mid AC}$, and $P_q$ are at most $2$, $2$, and $4$ respectively, for each of the above cases we have
	\[
	\vnu_h(\vz)\geq\vnu_q(\vz).
	\]
	If $q$ is an extreme ray, then by Lemma~\ref{lem:subtraction} $h$ is spanned by $q$. However, for the listed $h$, either they are not extremal, or they have more than one contacts and hence is not spanned by $q$. Hence $q$ could not be extremal.
\end{proof}

We have now reduced the non-PSD problem to finite algebraic data.  The
contact set specifies the locations, the contact multiplicity vectors
specify the required boundary factors, and
Lemma~\ref{lem:subtraction} converts the componentwise domination
\(\boldsymbol{\nu}_h\geq\boldsymbol{\nu}_q\) into a conic
decomposition.  The next subsection follows this sequence: it enumerates the
patterns allowed by contact multiplicity vectors, constructs a multiplicity-dominance candidate
whenever possible, and performs coefficient calculations only for the
patterns that survive.

\subsection{The non-PSD case: complete extreme ray classification}
\label{sec:non-psd}

Proposition~\ref{prop:infinite-zeros} treats non-PSD extreme rays with
infinitely many contacts.  Proposition~\ref{prop:has-zero} excludes the
zero-contact case, and Proposition~\ref{prop:one-contact-non-psd} excludes the
one-contact case.  It remains to classify finite contact sets of cardinality
at least two.  In this subsection, we classify all remaining finite contact
sets.  For convenience, we write the three families of interior boundary
points as
\begin{equation}\label{eq:zero-points}
	X_t=(\tfrac12,t)\in AB^\circ,
	\qquad
	Y_t=(t,\tfrac12)\in AC^\circ,
	\qquad
	G_t=(t,t^{-1})\in\Gamma^\circ,
	\qquad
	t\in(\tfrac12,2).
\end{equation}
We will use the above notations to describe the contact set $Z(q)$ of a ray. For example, $Z(q)= \{A, X_s, Y_w, G_u\}$ means that $q$ has one contact at the vertex $A$ and three contacts on $AB^\circ$, $AC^\circ$, and $\Gamma^\circ$ respectively. When we use such notations to describe $Z(q)$, our convention is that the contacts described in the set are assumed distinct. For example, if we have $Z(q) = \{A, X_s, X_t, G_u, G_v\}$, then our convention is that $s\not =t$ and $u\not =v$.

We start with exactly two contacts. Let \(q=q_{R,\vr,p}\) be an extreme
ray with \(R\not\succeq0\) and
\(Z(q)=\{\vx,\vy\}\).  We compare \(q\) with the affine square
\(h=\ell^2\), where \(\ell=0\) is the line through \(\vx\) and
\(\vy\).  If
\[
\vnu_h(\vx)\geq\vnu_q(\vx)
\ \text{and}\ 
\vnu_h(\vy)\geq\vnu_q(\vy),
\]
then Lemma~\ref{lem:subtraction} forces \(h\) to be proportional to
\(q\), contrary to \(R\not\succeq0\).  As Lemma~\ref{lem:two-contact-profiles} will show, this comparison can fail only specific contact patterns. The following technical lemma will be used in the analysis, which is useful in eliminating some invalid contact patterns.

\begin{lemma}\label{lem:mixed-flat}
Let \(q:=q_{R,\vr,p}\in\Ncone(\Gset)\) be a ray with finite contact set.  Suppose that
\(X_s\in Z(q)\) and \(P_q(t)=\theta(t-u)^2(t-v)^2\), where
\(\theta>0\) and \(u,v\in(1/2,2)\) ($u=v$ is possible).  Then
\begin{equation}\label{eq:flat-forces-flat}
	s=\frac{u+v-1/2}{uv}\text{ and }
	b_{q\mid AC}(t)
	=\theta\left(t-u-v+\frac{uv}{2}\right)^2.
\end{equation}
In particular, \(q\) has a contact in \(AC^\circ\).
\end{lemma}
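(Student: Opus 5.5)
The plan is to use the fact that the quartic $P_q$ fixes almost all coefficients of $q$. Comparing coefficients of $P_q(t)=\theta(t-u)^2(t-v)^2$ with \eqref{eq:Pq} gives
\[
R_{11}=\theta,\quad r_1=-\theta(u+v),\quad 2R_{12}+p=\theta\bigl((u+v)^2+2uv\bigr),\quad r_2=-\theta uv(u+v),\quad R_{22}=\theta u^2v^2 .
\]
Only the split of $2R_{12}+p$ between $R_{12}$ and $p$ is free, so $q=q^{(0)}+\delta\ell_\Gamma$ with $q^{(0)}$ fixed by $(\theta,u,v)$ and a single unknown $\delta$. I will therefore treat $p$ as the one remaining unknown, with $R_{12}=\tfrac12\bigl(\theta((u+v)^2+2uv)-p\bigr)$.

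Next I use the contact $X_s$. Here $s\in(\tfrac12,2)$, the quadratic $b_{q\mid AB}$ is nonnegative on $[\tfrac12,2]$, and it vanishes at the interior point $s$. So $s$ is a double root, and since the leading coefficient $R_{22}=\theta u^2v^2$ is positive,
\[
b_{q\mid AB}(t)=\theta u^2v^2(t-s)^2 .
\]
Matching the constant and linear coefficients with \eqref{eq:Pq} gives two equations in the unknowns $p$ and $s$. The constant term gives
\[
p=\theta\bigl(u^2v^2s^2-\tfrac14+u+v\bigr).
\]
Substituting this into the linear coefficient and setting $x:=uvs$ leads to a quadratic equation in $x$. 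Its roots should be
\[
x=u+v-\tfrac12\qquad\text{and}\qquad x'=4uv-\bigl(u+v-\tfrac12\bigr),
\]
which I will confirm by checking that the sum of the roots is $4uv$ and verifying the product directly. The first root gives the claimed value $s=(u+v-\tfrac12)/(uv)$.

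The main obstacle is excluding the spurious root $s'=4-(u+v-\tfrac12)/(uv)$ when it also lies in $(\tfrac12,2)$. For that root, $p$ and $R_{12}$ are determined, and I would compute $b_{q\mid AC}$ explicitly. I expect it to have negative discriminant impossible to reconcile, or more likely a negative value at some point of $[\tfrac12,2]$, contradicting $q\in\Ncone(\Gset)$. Failing that, I would show that the resulting quadratic is negative at an interior point of $\Gset$ near $X_{s'}$, using the gradient direction at the double root. The first thing to try is to evaluate $b_{q\mid AC}(\tfrac12)=b_{q\mid AB}(\tfrac12)$ at $A$, or the quadratic at a convenient point, with $s=s'$.

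With the correct root, I substitute $p$ and $R_{12}$ into
\[
b_{q\mid AC}(t)=R_{11}t^2+(R_{12}+2r_1)t+\tfrac14R_{22}+r_2+p
\]
and check, by routine algebra, that it equals $\theta\bigl(t-u-v+\tfrac{uv}{2}\bigr)^2$. For the final claim, let $w:=u+v-\tfrac{uv}{2}$. Then
\[
2-w=\tfrac12(2-u)(2-v)>0,\qquad w-\tfrac12=u\bigl(1-\tfrac v2\bigr)+v-\tfrac12>\tfrac{3v}{4}>0,
\]
so $w\in(\tfrac12,2)$. Hence $Y_w\in AC^\circ$ is a contact of $q$.
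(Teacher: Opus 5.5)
Your route is the paper's own: coefficient matching from $P_q$, the double root of $b_{q\mid AB}$ at $s$, a quadratic in $uvs$ with roots $u+v-\tfrac12$ and $4uv-(u+v-\tfrac12)$, and substitution into $b_{q\mid AC}$. Your check that $w=u+v-\tfrac{uv}{2}\in(\tfrac12,2)$ is correct, and it supplies a detail the paper leaves implicit.

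\textbf{The gap.} The problem is the step you call the main obstacle. You never actually exclude the spurious root $s'=4-(u+v-\tfrac12)/(uv)$. What you offer there is a list of hopes, not an argument, and the first remedy you propose cannot work. Evaluating at $A$ gives
$b_{q\mid AC}(\tfrac12)=q(A)=b_{q\mid AB}(\tfrac12)=\theta u^2v^2(\tfrac12-s)^2\ge 0$
for either root, so no contradiction is possible there. The other suggestions, a negative value of $b_{q\mid AC}$ or a negative interior value near $X_{s'}$, are stated only as expectations.

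\textbf{The missing observation.} The case you worry about never occurs. The two roots in $s$ sum to $4$, since the roots in $uvs$ sum to $4uv$. The good root satisfies
$(u+v-\tfrac12)/(uv)<2$,
because
$2uv-u-v+\tfrac12=\tfrac12(2u-1)(2v-1)>0$
for $u,v>\tfrac12$. Hence $s'>2$, which is incompatible with $X_{s'}\in AB^\circ$, so $s$ must be the first root. This is exactly how the paper disposes of the second root. Put this root-location argument in place of your speculative paragraph and the rest of your proof goes through as written.
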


\begin{proof}
Comparing $P_q$ in the assumption with the coefficients described in \eqref{eq:Pq} yields
$
	R_{11}=\theta,$ $
	r_1=-\theta(u+v),$ $
	R_{22}=\theta u^2v^2,$ $
	r_2=-\theta uv(u+v),$, and $
	2R_{12}+p=\theta(u^2+4uv+v^2)$.
Since \(X_s\) is associated with an interior zero of the nonnegative quadratic
\(b_{q\mid AB}\), observing the description of \(b_{q\mid AB}\) in \eqref{eq:Pq} we also have
\(b_{q\mid AB}(t)=R_{22}(t-s)^2 = \theta u^2v^2(t-s)^2\).  Comparing its remaining
coefficients and eliminating \(R_{12}\) and \(p\) yields
\[
	(uvs-u-v+1/2)(uvs-4uv+u+v-1/2)=0.
\]
There are two possible roots of $s$ to the above equation; by observing the coefficient of $s$ we know that the two root sum to \(4\).  By the fact that \(2uv-u-v+1/2=(2u-1)(2v-1)/2>0\), one of the possible root
\((u+v-1/2)/(uv)\) is less than \(2\). Therefore the other possible root exceeds \(2\) and should be excluded since $s\in (1/2, 2)$.
Substitution yields \eqref{eq:flat-forces-flat}. 
\end{proof}

With the help of the above lemma, we are able to characterize the contact pattern when there are exactly two contacts, as stated below. 

\begin{lemma}
	\label{lem:two-contact-reduction}
	\label{lem:two-contact-profiles}
	Suppose that \(q:=q_{R,\vr,p}\in\Ncone(\Gset)\) spans an extreme ray,
	\(R\not\succeq0\), and \(\card(Z(q))=2\).  Then, up to exchange of the
	coordinates, either \(Z(q)=\{A,B\}\), or
	\(Z(q)=\{B,Y_w\}\) for some \(w\in(1/2,2)\).  In either case,
	\([\vnu_q(B)]_\Gamma=3\).
\end{lemma}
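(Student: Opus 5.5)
The plan is to run the affine-square comparison described just before Lemma~\ref{lem:mixed-flat}. Let \(Z(q)=\{\vx,\vy\}\), let \(\ell\) vanish on the line through \(\vx,\vy\), and let \(h=\ell^2\). By Lemma~\ref{lem:subtraction} and \(R\not\succeq0\), the dominance \(\vnu_h\geq\vnu_q\) must fail at \(\vx\) or at \(\vy\). I will list the contact multiplicities that \(q\) can have and check when \(h\) fails to dominate them.

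\textbf{Multiplicity bookkeeping.} An interior zero of a nonnegative restriction has even multiplicity. So at \(X_s\) or \(Y_w\), the flat restriction of \(q\) has multiplicity exactly \(2\), since \(b_{q\mid AB},b_{q\mid AC}\) have degree at most two and are not identically zero. At \(G_u\), the multiplicity of \(P_q\) is \(2\) or \(4\). At \(A\), each flat restriction of \(q\) has multiplicity at most \(2\). At \(B\), \([\vnu_q(B)]_{AB}\leq2\) and \([\vnu_q(B)]_\Gamma\leq4\), and symmetrically at \(C\).

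For \(h=\ell^2\), every transversal zero has multiplicity \(2\) in each relevant restriction. A zero on \(\Gamma\) has multiplicity \(4\) when \(\ell\) is tangent to \(\Gamma\), and a restriction is \(\infty\) when \(\ell\) is the supporting line of \(AB\) or \(AC\).

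\textbf{Locating the failure.} From this bookkeeping, dominance can fail only in two situations:
\begin{enumerate}
\item some \(G_u\in Z(q)\) has \([\vnu_q(G_u)]_\Gamma=4\) while \(\ell\) is not tangent at \(G_u\);
\item a vertex \(B\) or \(C\) has \(\Gamma\)-multiplicity \(3\) or \(4\).
\end{enumerate}

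In situation (i), \(P_q=\theta(t-u)^4\) because \(\deg P_q\leq4\). If the other contact is \(X_s\), Lemma~\ref{lem:mixed-flat} with \(u=v\) produces a third contact in \(AC^\circ\), which is a contradiction; the case \(Y_w\) is symmetric. If the other contact is a vertex on \(\Gamma\), then \(P_q\) would need a fifth root, which is impossible. If the other contact is \(A\), I compare coefficients of \(P_q=\theta(t-u)^4\) with \(b_{q\mid AB}(\tfrac12)=b_{q\mid AC}(\tfrac12)=0\) and expect a contradiction with nonnegativity, or with \(\card Z(q)=2\).

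In situation (ii), the degree bound forbids any further root of \(P_q\) in \(\Gamma^\circ\), because \(3+2>4\). So the other contact is \(A\), \(C\), some \(X_s\), or some \(Y_w\). The case \(X_s\) with \(B\) is impossible: \(b_{q\mid AB}\) would have a double root at \(s\) and a root at \(2\), giving degree \(3\). The pair \(\{B,C\}\) must be excluded by a separate nonnegative \(h\). For this I would try \(h=\ell_{BC}\ell_\Gamma\)-type products or \(f_{1/2}\), checking \(\vnu_h\geq\vnu_q\) at \(B\) and \(C\) using \(P_{f_{1/2}}=(t-\tfrac12)^3(2-t)\). This leaves, up to coordinate exchange, \(\{A,B\}\) and \(\{B,Y_w\}\), each with \([\vnu_q(B)]_\Gamma\in\{3,4\}\).

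\textbf{Main obstacle.} The expected hardest step is excluding \([\vnu_q(B)]_\Gamma=4\), i.e.\ \(P_q=\theta(t-\tfrac12)^4\). Here I would first try the tangent-at-\(B\) square \(h=(x_1+\tfrac14x_2-1)^2\), which has \(\vnu_h(B)=(2,0,4)\). Failing that, I would solve the coefficient equations directly: \(P_q\) fixes \(R_{11},r_1,R_{22},r_2\) and \(2R_{12}+p\). Then I impose \(b_{q\mid AB}(2)=0\) together with either a double root of \(b_{q\mid AC}\) at \(w\) or a zero at \(A\). I expect this to force \(b_{q\mid AB}\) or \(b_{q\mid AC}\) negative somewhere on \([\tfrac12,2]\), or to give \(R\succeq0\), either of which contradicts the hypotheses and leaves exactly \([\vnu_q(B)]_\Gamma=3\).
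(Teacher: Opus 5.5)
Your outline follows the paper's proof step for step. It has the same affine-square comparison and the same two failure locations. It uses Lemma~\ref{lem:mixed-flat} when $P_q=\theta(t-u)^4$, and your expected contradiction at $A$ does materialize: $q(A)=0$ forces $b_{q\mid AC}(t)=\theta(t-\tfrac12)(t-t_*)$ with $t_*=2-(2-u)^4/6\in(\tfrac12,2)$. It also uses the same degree count to restrict the second contact when $[\vnu_q(B)]_\Gamma\geq3$.

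The gap is in the two exclusions that actually need an idea. There, the certificates you propose do not work as stated.
\begin{itemize}
\item The product $\ell_{BC}\ell_\Gamma$ is cubic, so it is not in $\Ncone(\Gset)$.
\item The domination $\vnu_{f_{1/2}}\geq\vnu_q$ at $B$ and $C$ requires $[\vnu_q(B)]_{AB}=[\vnu_q(C)]_{AC}=1$, which you never check. It is true: $P_q=\lambda(t-\tfrac12)^3(2-t)$ gives $R_{22}=-\lambda/4<0$ and $R_{11}=-\lambda<0$. A double root of $b_{q\mid AB}$ or $b_{q\mid AC}$ at $t=2$, together with $q(A)>0$, would force these leading coefficients to be positive.
\item For $[\vnu_q(B)]_\Gamma=4$, the tangent square $(x_1+\tfrac14x_2-1)^2$ cannot dominate $q$. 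It is nonzero at $A$ and vanishes at $Y_w$ only for $w=7/8$, so dominance fails at the second contact.
\end{itemize}

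The missing idea is that $P_q$ determines $q$ modulo $\ell_\Gamma$: the kernel of $q\mapsto P_q$ is exactly $\operatorname{span}\{\ell_\Gamma\}$. Hence $P_q=\lambda P_h$ for a known ray $h$ gives $q=\lambda h+\eta\ell_\Gamma$. A single evaluation then fixes the sign of $\eta$, without knowing where the second contact is.
\begin{itemize}
\item For $\{B,C\}$, take $h=f_{1/2}$. Then $q(A)>0=f_{1/2}(A)$ and $\ell_\Gamma(A)>0$ give $\eta>0$, a nontrivial decomposition.
\item For multiplicity four, take $h=(x_1+\tfrac14x_2-1)^2$. Then $0\leq q(Y_{7/8})=\tfrac{9}{16}\eta$, so $q=\lambda h+\eta\ell_\Gamma$ is a conic decomposition with $\lambda>0$. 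Extremality makes $q$ a multiple of $h$, contradicting $R\not\succeq0$.
\end{itemize}
This is exactly what the paper does.

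Your fallback coefficient computation amounts to the same parametrization $q=\theta h+\eta\ell_\Gamma$ and can be finished case by case. Its outcome is not quite what you predict, though. A contact at $A$ forces $\eta=-3\theta/16$, and then $q(Y_{7/8})<0$. A double root of $b_{q\mid AC}$ forces $\eta\in\{0,9\theta\}$. The value $\eta=9\theta$ yields neither negativity nor $R\succeq0$; instead it places the double root at $t=25/8\notin(\tfrac12,2)$, contradicting $Y_w\in AC^\circ$.
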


\begin{proof}
Write \(Z(q)=\{\vx,\vy\}\), let
\(\ell=0\) be the line through both points, and set \(h:=\ell^2\).
Comparing the contact multiplicity vectors $\vnu_q$ and $\vnu_h$ we have the following results:
\begin{itemize}
  	\item If \(\vz=A\), then
  	$
  		\vnu_q(A)\leq(2,2,0)^\top\leq\vnu_h(A).
  	$
  	\item If \(\vz=B\), then either
  	\(
  		\vnu_q(B)\leq(2,0,2)^\top\leq\vnu_h(B),
  	\)
  	or $[\vnu_q(B)]_\Gamma>[\vnu_h(B)]_\Gamma$. The latter case could only happen when $[\vnu_h(B)]_\Gamma=2$. 
  	\item If \(\vz=C\), then either
  	\(
  	\vnu_q(C)\leq(0,2,2)^\top\leq\vnu_h(C),
  	\)
  	or $[\vnu_q(C)]_\Gamma>[\vnu_h(C)]_\Gamma$. The latter case could only happen when $[\vnu_h(C)]_\Gamma=2$. 
  	\item If \(\vz\in AB^\circ\), then
  	$
  		\vnu_q(\vz)=(2,0,0)^\top\leq\vnu_h(\vz).
  	$
  	\item If \(\vz\in AC^\circ\), then
  	$
  		\vnu_q(\vz)=(0,2,0)^\top\leq\vnu_h(\vz).
  	$
  	\item If \(\vz\in\Gamma^\circ\), then either
  	\(
  		\vnu_q(\vz)=(0,0,2)^\top\leq\vnu_h(\vz),
  	\)
  	or
  	\(
  		\vnu_q(\vz)=(0,0,4)^\top\text{ and }
  		\vnu_h(\vz)=(0,0,2)^\top.
  	\)
\end{itemize}
If \(\vnu_h\geq\vnu_q\) at both contacts $\vx$ and $\vy$, the multiplicity-dominance principle in 
Lemma~\ref{lem:subtraction} and extremality force \(q\) to be proportional to the affine square \(h\), contrary to
\(R\not\succeq0\).  
As listed above, multiplicity dominance could only fail at a contact
\(G_u\in\Gamma^\circ\) (where \([\vnu_q(G_u)]_\Gamma=4\) and
\([\vnu_h(G_u)]_\Gamma=2\)), at a vertex $B$ (where 
\([\vnu_q(B)]_\Gamma\in\{3,4\}\) and
\([\vnu_h(B)]_\Gamma=2\)), or at the vertex $C$. The case at vertex $C$ is similar to that at $B$ up to exchange of coordinates. 

Suppose first that multiplicity dominance fails at a contact
\(G_u\in\Gamma^\circ\) with \([\vnu_q(G_u)]_\Gamma=4\) and
\([\vnu_h(G_u)]_\Gamma=2\). In this case, the multiplicity requires that 
\(P_q(t)=\theta(t-u)^4\) for some \(\theta>0\). Consequently, there could only be one contact on \(\Gamma\). If the second contact lies on $AB^\circ$, then
Lemma~\ref{lem:mixed-flat} (with \(u=v\)), yields a third contact on
$AC^\circ$, contradiction our assumption that $\card(Z(q))=2$. By symmetry the second contact could not lie on $AC^\circ$ either. If the second contact is \(A\), direct coefficient
comparison using \(P_q=\theta(t-u)^4\) and \(q(A)=0\) yields
\[
	b_{q\mid AC}(t)
	=
	\theta\left(t-\frac{1}{2}\right)
	\left(t-t_*\right),\text{ where }t_*:=2-\frac{(2-u)^4}{6}. 
\]
Since
\(0<(2-u)^4<(3/2)^4<9\), we have \(t_*\in(1/2,2)\), and hence $b_{q\mid AC}(t)$ is negative on \((1/2,t_*)\), contradicting nonnegativity. 

It remains to consider the case when multiplicity dominance fails at either $B$ or $C$. By symmetry, assume that the
failure happens at \(B\). 
If \([\vnu_q(B)]_\Gamma=4\), then
\(P_q(t)=\lambda(t-1/2)^4\) for some \(\lambda>0\).  Observing that $(t-1/2)^4 = P_{\ell^2}(t)$ where $\ell(\vx):=x_1+(1/4)x_2-1$, we have that $P_{q - \lambda\ell^2}$ is a zero polynomial. Comparing with the coefficients in \eqref{eq:Pq} we have  
\(
	q - \lambda\ell^2 
	=
	\eta\ell_\Gamma
\) for some $\eta$. 
Observing that
\(
\ell(Y_{7/8})=0\) and \( \ell_\Gamma(Y_{7/8})=9/{16}\), we obtain
\(
0\le q(Y_{7/8})=(9/16)\eta,
\)
so \(\eta\geq0\). Thus $q = \lambda \ell^2 + \eta\ell_\Gamma$ is a conic decomposition, in which $\lambda>0$. Extremality then forces $q$ to be a nonnegative multiple of $\ell^2$, contradicting to the assumption that $R\not\succeq 0$. Thus we are only left with
\([\vnu_q(B)]_\Gamma=3\).

Since the degree of \(b_{q\mid AB}\) and \(P_q\) are $2$ and $4$ respectively while \([\vnu_q(B)]_{AB}\ge 1\) and \([\vnu_q(B)]_\Gamma=3\), the second contact cannot lie on either  \(AB^\circ\), or \(\Gamma^\circ\).  Thus the second contact is \(A\), \(C\),
or a point \(Y_w\in AC^\circ\).  The choice \(C\) is also impossible.
Indeed, in such case we have 
\(P_q(t)=\lambda(t-1/2)^3(2-t)=\lambda P_{f_{1/2}}(t)\) for some
\(\lambda>0\). Since $P_{q - \lambda f_{1/2}}$ is a zero polynomial, comparing the coefficients in \eqref{eq:Pq} yields 
\(q=\lambda f_{1/2}+\eta\ell_\Gamma\) for some $\eta$.  Since \(Z(q)=\{B,C\}\), we
have \(q(A)>0\), while \(f_{1/2}(A)=0\) and
\(\ell_\Gamma(A)>0\).  Hence \(\eta>0\) and $q$ not extremal, leading to contradiction. 
The remaining possibilities of the second contact are \(A\) and \(Y_w\in AC^\circ\). 
\end{proof}

For the contact patterns described in the above lemma, we can associated them to extreme ray classifications below. 

\begin{lemma}\label{lem:two-contact-B}
Suppose that \(q:=q_{R,\vr,p}\in\Ncone(\Gset)\) is an extreme ray that satisfies 
\(R\not\succeq0\). If \(Z(q)=\{A,B\}\) or \(Z(q)=\{B,Y_w\}\), then $q$ is spanned by $g^1_{1/2,1/2}$ or $g^1_{1/2,w}$ respectively, where $1/2< w<7/8$. 
\end{lemma}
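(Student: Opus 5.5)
The plan is to pin $q$ down by the linear conditions its contact profile imposes, and then to identify the resulting low-dimensional space with the $g^1$ family. Lemma~\ref{lem:two-contact-profiles} already gives $[\vnu_q(B)]_\Gamma=3$. Because $P_q\ge0$ on $[\tfrac12,2]$ and $Z(q)\cap\Gamma=\{B\}$, we get $P_q(t)=(t-\tfrac12)^3\,m(t)$ with $m$ affine and $m>0$ on $(\tfrac12,2]$. Since $q(B)=0$ is the same as $P_q(\tfrac12)=0$, the data at $B$ give three linear conditions on the six coefficients $(R,\vr,p)$: $P_q(\tfrac12)=P_q'(\tfrac12)=P_q''(\tfrac12)=0$. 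In parallel, I record the identity \eqref{eq:bounded:g-bottom} specialised to $v=\tfrac12$:
\[
b_{g^1_{1/2,w}\mid AC}(t)=(t-w)^2,\qquad
P_{g^1_{1/2,w}}(t)=(t-\tfrac12)^3\bigl(t+\tfrac32\alpha_{1/2,w}-2\bigr),
\]
where $\alpha_{1/2,w}=\tfrac{64}{81}(2-w)^2>1$ exactly when $w<\tfrac78$.

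\emph{Case $Z(q)=\{B,Y_w\}$.} Here $Y_w$ is an interior zero of the nonnegative quadratic $b_{q\mid AC}$, so $b_{q\mid AC}(t)=\mu(t-w)^2$ with $\mu>0$. (If $\mu=0$, then $b_{q\mid AC}$ vanishes identically, giving infinitely many contacts.) Set $h:=q-\mu\,g^1_{1/2,w}$. Then $b_{h\mid AC}\equiv0$, so $h$ vanishes on $AC$, and $P_h$ has a zero of multiplicity at least three at $t=\tfrac12$. Lemma~\ref{lem:contact-uniqueness}\textup{(ii)} gives $h=0$, so $q=\mu g^1_{1/2,w}$. It remains to see $w<\tfrac78$, and this comes from nonnegativity of $q$:
\begin{itemize}
\item If $w>\tfrac78$, then $\alpha_{1/2,w}<1$, so the root $2-\tfrac32\alpha_{1/2,w}$ of the cofactor of $P_q$ lies in $(\tfrac12,2)$ and $P_q$ changes sign there, which is impossible.
\item If $w=\tfrac78$, then \eqref{eq:g-overlap} shows $q$ is an affine square, contradicting $R\not\succeq0$.
\end{itemize}

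\emph{Case $Z(q)=\{A,B\}$.} Now $b_{q\mid AC}(\tfrac12)=0$, and $b_{q\mid AC}$ has no other zero in $(\tfrac12,2]$. The conditions $q(A)=0$ together with the three conditions at $B$ are four linear conditions. I claim the solution space is exactly $\operatorname{span}\{g^1_{1/2,1/2},f_{1/2}\}$. Both quadratics satisfy the conditions: $P_{f_{1/2}}=(t-\tfrac12)^3(2-t)$ by \eqref{eq:f-curve}. They are independent, since $g^1_{1/2,1/2}(C)>0=f_{1/2}(C)$. For the upper bound on the dimension, take any solution $h$ and subtract suitable multiples of the two quadratics so that $b_{h\mid AC}\equiv0$. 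This is possible because $b_{g^1_{1/2,1/2}\mid AC}=(t-\tfrac12)^2$ and $b_{f_{1/2}\mid AC}=(t-\tfrac12)(2-t)$ span all quadratics vanishing at $\tfrac12$. Lemma~\ref{lem:contact-uniqueness}\textup{(ii)} then gives $h=0$.

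So $q=a\,g^1_{1/2,1/2}+b\,f_{1/2}$ for some reals $a,b$, and I rule out every sign pattern except $b=0$:
\begin{itemize}
\item $q(C)=a\,g^1_{1/2,1/2}(C)>0$ forces $a>0$.
\item If $b>0$, this is a nontrivial conic decomposition of $q$, contradicting extremality.
\item If $b<0$, then
\[
b_{q\mid AC}'(\tfrac12)=a\cdot0+b\cdot\tfrac32<0,
\]
so $b_{q\mid AC}$ is negative just to the right of $\tfrac12$, contradicting nonnegativity.
\end{itemize}
Hence $b=0$ and $q=a\,g^1_{1/2,1/2}$.

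\emph{Main obstacle.} The step I expect to need the most care is the dimension count in the $\{A,B\}$ case. One has to make sure that a quadratic vanishing on $AC$ with a triple $\Gamma$-zero at $B$ is zero, which is exactly Lemma~\ref{lem:contact-uniqueness}\textup{(ii)}. One also has to correctly identify a second generator that has the right profile at $A$ and $B$ but is excluded by the sign of its derivative along $AC$. Choosing $f_{1/2}$ does both jobs: it supplies the missing direction, and the derivative test handles the sign. The remaining checks are direct coefficient computations from \eqref{eq:Pq}.
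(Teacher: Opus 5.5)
Your proof is correct and follows essentially the same route as the paper: in both cases you subtract multiples of $g^1_{1/2,w}$ (and, in the $\{A,B\}$ case, of $f_{1/2}$) to kill the $AC$-restriction and then apply Lemma~\ref{lem:contact-uniqueness}\textup{(ii)}; the only cosmetic difference is that you fix the coefficient signs afterwards, via $q(C)>0$ and the derivative of $b_{q\mid AC}$ at $t=\tfrac12$, whereas the paper reads them off from the endpoint values of the nonnegative affine cofactor of $b_{q\mid AC}$. Your explicit derivation of $w<\tfrac78$, using the sign of the cofactor of $P_q$ and the affine-square overlap at $w=\tfrac78$, fills in a step that the paper only asserts.
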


\begin{proof}
Suppose first that \(Z(q)=\{B,Y_w\}\). Since $Y_w$ is associated with an interior zero of \(b_{q\mid AC}\), we have 
\(b_{q\mid AC}(t)=\lambda(t-w)^2\) for some \(\lambda>0\).  By
\eqref{eq:bounded:g-bottom}, the difference
\(q-\lambda g^1_{1/2,w}\) vanishes identically on \(AC\).  
Moreover, note from Lemma~\ref{lem:two-contact-reduction} that
\([\vnu_q(B)]_\Gamma=3\), and observe that
\[
P_{g^1_{1/2,w}}(t)
=
(t-1/2)^3
\left(t+\frac32\alpha_{1/2,w}-2\right),
\text{ where }
\alpha_{1/2,w}=\frac{64(2-w)^2}{81}.
\]
Therefore, $P_{q-\lambda g^1_{1/2,w}}$ has a zero of multiplicity at least three at \(B\). Applying 
Lemma~\ref{lem:contact-uniqueness}\textup{(ii)} we have
\(q=\lambda g^1_{1/2,w}\). The condition of $w$ in the definition of $g_{v,w}^1$ yields $1/2<w<7/8$. 

Next, suppose that \(Z(q)=\{A, B\}\). Since the vertex $A$ is a contact, looking at the boundary restriction on $AC$ we should have
\(
b_{q\mid AC}(t)=(t-1/2)\ell(t),
\)
where \(\ell\) is an affine function.  Using its value at $1/2$ and $2$ for description, we have
\[
\ell(t)
=\lambda (t-\tfrac12)
+\mu(2-t),\text{ where }\lambda:=\tfrac23\ell (2)\text{ and }\mu:=\tfrac23\ell(\tfrac12).
\]
Since \(t-1/2>0\) on
\((1/2,2]\) and \(b_{q\mid AC}\geq0\) on \([1/2,2]\), we
have \(\ell\geq0\) on this interval, including at \(t=1/2\) by
continuity.  Thus $\lambda,\mu\ge0$ and we have
\[
b_{q\mid AC}(t)
=(t-\tfrac12)
\left[\lambda(t-\tfrac12)+\mu(2-t)\right]. 
\]
The two terms in the bracket are the \(AC\)-boundary restrictions of
\(\lambda g^1_{1/2,1/2}\) and \(\mu f_{1/2}\).  Observe that
\(d:=q-\lambda g^1_{1/2,1/2}-\mu f_{1/2}\) vanishes on \(AC\), and
\(P_d\) has multiplicity at least three at \(B\).
Lemma~\ref{lem:contact-uniqueness}\textup{(ii)} yields
\(q=\lambda g^1_{1/2,1/2}+\mu f_{1/2}\).  Extremality forces $q$ to be spanned by either $g^1_{1/2,1/2}$ or $f_{1/2}$, while \(f_{1/2}\) also vanishes at \(C\).  Therefore $q$ could only be spanned by $g^1_{1/2,1/2}$. 
\end{proof}

\begin{corollary}\label{cor:two-zero-exceptions}
Suppose that \(q:=q_{R,\vr,p}\in\Ncone(\Gset)\) is an extreme ray with
\(R\not\succeq0\), and \(\card(Z(q))=2\).  Then \(q\) is a nonnegative
multiple of \(g^1_{1/2,w}\) or \(g^2_{1/2,w}\) for some
\(w\in[1/2,7/8)\).
\end{corollary}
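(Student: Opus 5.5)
This corollary follows by combining Lemma~\ref{lem:two-contact-profiles} with Lemma~\ref{lem:two-contact-B} and then applying the coordinate exchange. Let \(q\) be an extreme ray with \(R\not\succeq0\) and \(\card(Z(q))=2\). Lemma~\ref{lem:two-contact-profiles} shows that, possibly after swapping \(x_1\) and \(x_2\), the contact set is either \(\{A,B\}\) or \(\{B,Y_w\}\) with \(w\in(1/2,2)\).

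Next we apply Lemma~\ref{lem:two-contact-B} in the unswapped orientation. If \(Z(q)=\{A,B\}\), then \(q\) is a positive multiple of \(g^1_{1/2,1/2}\). If \(Z(q)=\{B,Y_w\}\), then \(q\) is a positive multiple of \(g^1_{1/2,w}\). The admissible range of \(w\) comes from the parameter constraint in \eqref{eq:g} at \(v=1/2\), namely \(1/2\le w<(2+3/2)/4=7/8\). This constraint is needed, and is not just a formality. At \(w=7/8\), \eqref{eq:g-overlap} shows that \(g^1_{1/2,7/8}\) is an affine square, which would contradict \(R\not\succeq0\). For \(w>7/8\), \(\alpha_{1/2,w}<1\), so \(\widehat b_{g^1\mid AB}(2)\) or \(\widehat P_{g^1}\) becomes negative. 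We therefore rely on the conclusion of Lemma~\ref{lem:two-contact-B}, which already gives \(w<7/8\). Taking the union of the two subcases yields \(w\in[1/2,7/8)\).

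Finally, we undo the swap. If the coordinate exchange was used, then \(q(x_2,x_1)\) is an extreme ray of the first type. The map \((x_1,x_2)\mapsto(x_2,x_1)\) preserves \(\Gset\) and hence also \(\Ncone(\Gset)\) and its extreme rays. Since \(g^2_{v,w}(x_1,x_2)=g^1_{v,w}(x_2,x_1)\) by definition, \(q\) is a nonnegative multiple of \(g^2_{1/2,w}\). The only step that needs care is checking that the symmetry sends contacts \(B\leftrightarrow C\) and \(Y_w\leftrightarrow X_w\) consistently with the ``up to exchange'' clause of Lemma~\ref{lem:two-contact-profiles}; this is immediate from the definitions. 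There is no real obstacle here.
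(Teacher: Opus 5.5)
Your proposal is correct and follows the paper's own argument: combine Lemma~\ref{lem:two-contact-profiles} with Lemma~\ref{lem:two-contact-B}, then use the coordinate exchange to obtain the \(g^2_{1/2,w}\) case. Your extra remarks also correctly justify the range \(w<7/8\), which Lemma~\ref{lem:two-contact-B} states only tersely: \(w=7/8\) gives the affine square \eqref{eq:g-overlap}, and \(w>7/8\) forces \(\alpha_{1/2,w}<1\), which makes \(\widehat P_{g^1_{1/2,w}}\) negative just to the right of \(t=1/2\).
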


\begin{proof}
This result is immediate from Lemmas~\ref{lem:two-contact-profiles} and 
\ref{lem:two-contact-B} (up to exchange of coordinates). 
\end{proof}

We next enumerate the contact sets of cardinality at least three before
performing any further coefficient calculations.

\begin{lemma}\label{lem:finite-possibilities}
Suppose that \(q:=q_{R,\vr,p}\in\Ncone(\Gset)\) spans an extreme ray and satisfies $R\not\succeq 0$ and \(3\le \card(Z(q))<\infty\).  Up to exchange of the coordinates, exactly one of
the following cases occurs:
\begin{enumerate}
	\item \(Z(q)=\{A,B,C\}\).
	\item \(Z(q)=\{A,B,G_v\}\).
	\item \(Z(q)=\{B,Y_w,G_v\}\).
	\item \(Z(q)=\{A,B,C,G_u\}\).
\end{enumerate}
In particular, we have \(\card(Z(q))\leq4\).
\end{lemma}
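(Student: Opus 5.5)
Our plan is to bound, piece by piece, the total multiplicity that the boundary restrictions of \(q\) can carry, and then to prune the surviving configurations with the multiplicity-dominance principle (Lemma~\ref{lem:subtraction}), Lemma~\ref{lem:mixed-flat}, and a few direct coefficient comparisons.

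\textbf{Step 1: multiplicity budget.} Since \(\card(Z(q))<\infty\), none of \(b_{q\mid AB}\), \(b_{q\mid AC}\), \(P_q\) vanishes identically. Their degrees are at most \(2\), \(2\), \(4\), so the total multiplicity on \(AB\), on \(AC\), and on \(\Gamma\) is at most \(2\), \(2\), \(4\) respectively. Nonnegativity gives the parity constraints. An interior contact \(X_s\) or \(Y_w\) uses a double root, so it consumes the whole budget of its flat side. Each \(G_t\) uses even multiplicity at least \(2\), so at most two points \(G_t\) occur. A vertex uses multiplicity at least one on each piece it touches. With this we can write down the finitely many contact types as multisets. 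For instance, \(AB\) carries either one \(X_s\), or a subset of \(\{A,B\}\). Likewise \(AC\) carries either one \(Y_w\) or a subset of \(\{A,C\}\), and \(\Gamma\) carries at most two interior points, with \(B\) or \(C\) costing at least one each.

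\textbf{Step 2: eliminate configurations.} First, \(X_s\) and \(A\) cannot coexist, and \(X_s\) and \(B\) cannot coexist, since \(X_s\) uses the whole \(AB\) budget. The same holds for \(Y_w\) with \(A\) or \(C\). Next, if two points \(G_u,G_v\) occur, then \(P_q=\theta(t-u)^2(t-v)^2\). In that case Lemma~\ref{lem:mixed-flat} applies whenever an \(X_s\) is present, and the symmetric version applies with \(Y_w\). We then expect that a contact set such as \(\{G_u,G_v,X_s,Y_w\}\) is dominated by a product \(\ell_1\ell_2\) or by a sum of two affine squares. For this we will look for \(h\) with \(\vnu_h\ge\vnu_q\), for example \(h=\ell^2\) with \(\ell\) through two of the contacts, or a suitable tangent-type ray \(f_u\), \(g\). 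Then Lemma~\ref{lem:subtraction} forces \(h\) to be proportional to \(q\). This is impossible because \(h\) has contacts that \(q\) lacks, or because \(h\) has PSD quadratic part.

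The general pruning device is this. Whenever \(Z(q)\) is contained in the zero set of a listed nonnegative ray \(h\) of strictly larger contact set, with dominating multiplicities, \(q\) cannot be extreme. The known rays \(f_u\), \(g^1_{v,w}\), affine squares through pairs of points, \(\ell_\Gamma\), and the boundary products furnish these \(h\). For example, \(\ell_{AB}\ell_{AC}\) has \(\vnu=(\infty,\infty,\cdot)\) at \(A\) and can dominate contact sets built from \(A\), \(X_s\), \(Y_w\). As another example, \(\ell_\Gamma\) dominates sets lying on \(\Gamma\) together with \(B\) or \(C\) whenever the flat multiplicities there are \(1\). After this pruning, and using the coordinate exchange, the surviving three-or-more-contact sets should be exactly \(\{A,B,C\}\), \(\{A,B,G_v\}\), \(\{B,Y_w,G_v\}\), and \(\{A,B,C,G_u\}\). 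The bound \(\card(Z(q))\le4\) then follows.

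\textbf{Main obstacle.} We expect the hardest part to be the configurations with two interior \(\Gamma\)-contacts combined with vertex or flat contacts, such as \(\{A,G_u,G_v\}\), \(\{B,G_u,G_v\}\), and \(\{G_u,G_v,Y_w\}\). Here no single affine square dominates, because it meets \(\Gamma\) with multiplicity only \(2\) in total at tangency. For these we will first try direct coefficient comparison, in the same way as the \(\{A,G_u\}\) computation in Lemma~\ref{lem:two-contact-profiles}. We will plug \(P_q=\theta(t-u)^2(t-v)^2\) into \eqref{eq:Pq} together with the vertex condition, derive the resulting flat restriction, and show that it changes sign on \([\tfrac12,2]\), or that it forces an extra contact on \(AC^\circ\) that exhausts that budget in a contradictory way. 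If sign-change arguments fail in some subcase, the fallback is to exhibit an explicit dominating \(h\), a combination like \(f_u\) or \(g^1_{v,w}\) through the same points, and apply Lemma~\ref{lem:subtraction}.
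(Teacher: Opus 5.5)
Your multiplicity budget matches the paper's, and several of your eliminations would go through.
\begin{itemize}
\item The coefficient comparison you plan for \(\{A,G_u,G_v\}\) is exactly the paper's: it gives \(b_{q\mid AC}(t)=\kappa(t-\tfrac12)(t-t_*)\) with \(t_*\in(\tfrac12,2)\), so the restriction changes sign.
\item Lemma~\ref{lem:mixed-flat} disposes of \(\{X_s,G_u,G_v\}\) and its mirror image, because it produces a contact outside the set.
\item \(\ell_\Gamma\) does dominate \(\{B,C,G_u\}\), but you must first check that \(P_q=\kappa(t-\tfrac12)(2-t)(t-u)^2\) forces \(R_{11}=-\kappa<0\) and \(R_{22}=-\kappa u^2<0\). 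Then the flat multiplicities at \(B\) and \(C\) are really \(1\). The paper instead writes \(q=\kappa f_u+\delta\ell_\Gamma\) with \(\delta>0\).
\end{itemize}
Also, \(\{B,G_u,G_v\}\), which you list as a main obstacle, is already excluded by the \(\Gamma\)-budget.

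The genuine gap is in the two patterns that contain both an \(AB^\circ\) and an \(AC^\circ\) contact: \(\{X_s,Y_w,G_v\}\) and \(\{X_s,Y_w,G_u,G_v\}\). The budget allows both, and the second is also where Lemma~\ref{lem:mixed-flat} leads. None of your proposed certificates can be used here:
\begin{itemize}
\item A sum of two affine squares has at most one zero.
\item A product \(\ell_1\ell_2\) that is nonnegative on \(\Gset\) and vanishes at \(X_s\) and \(Y_w\) must be a multiple of \(\ell_{AB}\ell_{AC}\) (or a square). But \(\ell_{AB}\ell_{AC}\) is positive on \(\Gamma^\circ\).
\item An affine square through \(X_s\) and \(Y_w\) is known to vanish at \(G_v\) only after the fact.
\item None of \(f_u\), \(g^1_{v,w}\), \(g^2_{v,w}\) has interior contacts on both flat sides.
\end{itemize}
So Lemma~\ref{lem:subtraction} gives you nothing here, and your claim that the survivors ``should be exactly'' the four listed sets is unsupported.

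The missing idea is a direct conic decomposition of \(q\) itself, not a domination certificate. Since \(b_{q\mid AB}=\alpha(t-s)^2\) and \(b_{q\mid AC}=\beta(t-w)^2\) both equal \(q(A)>0\) at \(t=\tfrac12\), you get \(\alpha(s-\tfrac12)^2=\beta(w-\tfrac12)^2\). Let \(\ell_{s,w}\) be the line through \(X_s\) and \(Y_w\), and set \(\lambda=\alpha/(w-\tfrac12)^2>0\). Then \(\lambda\ell_{s,w}^2\) has the same two flat restrictions as \(q\). Hence \(q-\lambda\ell_{s,w}^2\) vanishes on both lines \(x_1=\tfrac12\) and \(x_2=\tfrac12\), so it equals \(\eta\ell_{AB}\ell_{AC}\). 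The first-order condition at \(X_s\), namely \(\partial_1q(X_s)=\eta(s-\tfrac12)\geq0\), gives \(\eta\geq0\). Since \(\lambda>0\), extremality forces \(q\) to be a multiple of \(\ell_{s,w}^2\), contradicting \(R\not\succeq0\). This holds regardless of the \(\Gamma\)-contacts.
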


\begin{proof}
	Fix any extreme ray \(q:=q_{R,\vr,p}\in\Ncone(\Gset)\) that satisfies $R\not\succeq 0$ and \(3\le \card(Z(q))<\infty\). Focusing on its contacts,  observe that the total multiplicities at all contacts are bounded by
	\begin{align}
		\label{eq:budget}
		\sum_{\vz\in Z(q)}\vnu_q(\vz)
		\leq(2,2,4)^\top.
	\end{align}
	This is since the boundary restriction functions $b_{q\mid AB}$, $b_{q\mid AC}$, and $P_q$ are at most $2$, $2$, and $4$ respectively. 
	An interior contact of $q$ on either
	$AB$ or $AC$ has multiplicity two in the corresponding component, and thus by \eqref{eq:budget}
	the quadratic $q$ cannot also vanish at an
	endpoint of that side.  An interior contact in \(\vz\in\Gamma^\circ\)
	satisfies \([\vnu_q(\vz)]_{\Gamma}\geq2\), so by \eqref{eq:budget} we have at most two contacts in \(\Gamma^\circ\). We are now ready to enumerate all possibilities of contact patterns. 
	
	If both $AB^\circ$ and $AC^\circ$ contain an interior zero, no vertex is available and
	the only possible patterns of $Z(q)$ are either
	three contacts \(\{X_s,Y_w,G_v\}\) or four contacts \(\{X_s,Y_w,G_u,G_v\}\).  If only \(AC^\circ\)
	contains an interior zero, then vertices \(A,C\) are excluded from $Z(q)$. Thus the only possible patterns of $Z(q)$ are either
	\(\{B,Y_w,G_v\}\) or \(\{Y_w,G_u,G_v\}\); the latter becomes
	\(\{X_s,G_u,G_v\}\) after exchanging the coordinates.  The contact patterns in
	which only \(AB^\circ\) has an interior zero can be enumerated by symmetry.
	
	If neither $AB^\circ$ nor $AC^\circ$ has an interior zero, all contacts are among
	\(A,B,C\) and at most two points of \(\Gamma^\circ\). 
	If in addition no contact is on \(\Gamma^\circ\), then the only contact pattern we have is $\{A, B, C\}$ since $\card(Z(q))\ge 3$. 
	By \eqref{eq:budget}, two contacts on \(\Gamma^\circ\) will force \(B,C\) to be excluded from $Z(q)$, leaving only contact pattern
	\(\{A,G_u,G_v\}\).  One contact on $\Gamma^\circ$ can occur with two or three vertices included in $Z(q)$, yielding 
	\(\{A,B,G_v\}\), \(\{B,C,G_u\}\), or \(\{A,B,C,G_u\}\), up to coordinate
	exchange. Combining the cases above, we obtain the following nine patterns:
\[
\begin{gathered}
	\{A,B,C\},\quad
	\{A,B,G_v\},\quad
	\{B,C,G_u\},\quad
	\{B,Y_w,G_v\},\quad
	\{A,G_u,G_v\},\\
	\{X_s,G_u,G_v\},\quad
	\{X_s,Y_w,G_v\},\quad
	\{A,B,C,G_u\},\quad
	\{X_s,Y_w,G_u,G_v\}.
\end{gathered}
\]
We will finish the proof by showing that only the four listed contact patterns could survive. 

First suppose that \(X_s,Y_w,G_v\in Z(q)\).  In such case, the two quadratic boundary 
restrictions $b_{q\mid AB}$ and $b_{q\mid AC}$ both have interior contacts. Such interior contacts always have multiplicity two, so we have
\(
	b_{q\mid AB}(t)=\alpha(t-s)^2\) and \(
	b_{q\mid AC}(t)=\beta(t-w)^2\) for some $\alpha$ and $\beta$. Noting that both restrictions take the value \(q(A)>0\) at \(t=1/2\), and hence 
\(
	\alpha(s-1/2)^2=\beta(w-1/2)^2>0.
	\) In particular, $\alpha,\beta>0$.
Defining 
\(
	\ell_{s,w}(x_1,x_2)
	:=(s-1/2)(x_1-w)+(w-1/2)(x_2-1/2)
\) and 
\(
	\lambda
	:=\alpha/{(w-1/2)^2}>0,
\)
a direct coefficient computation yields 
\[
	q=\lambda \ell_{s,w}^2+\eta\ell_{AB}\ell_{AC}, \text{ where }
	\eta:=2R_{12}-2\lambda(s-\tfrac12)(w-\tfrac12)\text{ satisfies }\partial_1q(X_s)=\eta(s-\tfrac12).
\]
Here $\eta\ge 0$: the optimality condition at $X_s$ yields $\partial_1q(X_s) = \langle \nabla q(X_s), (1,0)^\top\rangle \ge 0$ since $(1,0)^\top$ is a feasible direction at $X_s$. Thus the above equality on $q$ is a conic combination. Extremality forces that $q$ is a nonnegative multiple of $\ell_{s,w}^2$, contradicting the assumption that $R\not\succeq 0$. Hence contact patterns \(\{X_s,Y_w,G_v\}\) and
\(\{X_s,Y_w,G_u,G_v\}\) should both be eliminated.

Next suppose that \(Z(q)=\{B,C,G_u\}\).  Since all contacts are on $\Gamma$ and $G_u$ is on $\Gamma^\circ$, the boundary restriction $P_q$ must satisfy
\(
	P_q(t)=\kappa(t-\tfrac12)(2-t)(t-u)^2
	       =\kappa P_{f_u}(t)
\) for some $\kappa>0$. Therefore $P_{q-\kappa f_u}$ is a zero quartic polynomial. Recalling the coefficients of $P_q$ in \eqref{eq:Pq} we have $q-\kappa f_u = \delta \ell_\Gamma$ for some $\delta$. 
Noting that \(A\notin Z(q)\), \(f_u(A)=0\) and
\(\ell_\Gamma(A)>0\), evaluating at \(A\) yields \(\delta>0\). Thus $q$ has a nontrivial conic decomposition $q = \kappa f_u + \delta \ell_\Gamma$, contradicting its extremality. Hence contact pattern \(\{B,C,G_u\}\) should be eliminated. 

Now suppose that \(Z(q)=\{A,G_u,G_v\}\).  The two
interior double roots of $P_q$ yields
\(
	P_q(t)=\kappa(t-u)^2(t-v)^2
\)
for some $\kappa>0$. 
Comparing coefficients with $P_q$ in \eqref{eq:Pq} yields 
$
	R_{11}=\kappa$, $
	r_1=-\kappa(u+v)$, $
	R_{22}=\kappa u^2v^2$, $
	r_2=-\kappa uv(u+v)$, and $
	2R_{12}+p=\kappa(u^2+4uv+v^2)$.
Together with \(q(A)=0\), these identities give
\[
	b_{q\mid AC}(t)
	=\kappa(t-\tfrac12)
	 \left(t-t_*\right)\text{ where }t_*:=2-\frac{(2-u)^2(2-v)^2}{6}.
\]
Because \(u,v\in(1/2,2)\), we have 
$
	0<(2-u)^2(2-v)^2<(3/2)^4<9,
$
so \(t_*\in(\tfrac12,2)\).  The restriction $b_{q\mid AC}(t)$ is now negative on
\((\tfrac12,t_*)\), contradicting \(q\geq0\) on \(AC\). Thus we should eliminate contact pattern \(\{A,G_u,G_v\}\).

Finally, if \(Z(q)=\{X_s,G_u,G_v\}\), then
Lemma~\ref{lem:mixed-flat} supplies an additional contact
\(Y_w\in AC^\circ\). Thus \(\{X_s,G_u,G_v\}\) should not be a valid contact pattern. Indeed, even with an additional $Y_w$, the contact pattern \(\{X_s,Y_w,G_u,G_v\}\) has already been eliminated previously.

The only remaining patterns are now the four listed in the statement. In particular, we can observe that \(\card(Z(q))\leq4\).
\end{proof}

We are now ready to enumerate all the possible contact patterns listed in Lemma~\ref{lem:finite-possibilities} and decide the possible extreme rays which such patterns. We have the following lemmas.

\begin{lemma}\label{lem:pattern_ABC}
	Suppose that \(q:=q_{R,\vr,p}\in\Ncone(\Gset)\) is an extreme ray that satisfies $R\not\succeq 0$ and \(Z(q)=\{A,B,C\}\). Then $q$ is spanned by either
	\(f_{1/2}\) or \(f_2\).
\end{lemma}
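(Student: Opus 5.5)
The plan is to use the three-dimensional space of quadratics that vanish at $A,B,C$. Since $A,B,C$ are not collinear, every quadratic vanishing at these three points is a linear combination
\[
q=a\,\ell_{AB}\ell_{AC}+b\,\ell_{AB}\ell_{BC}+c\,\ell_{AC}\ell_{BC}.
\]
This holds because the three products are linearly independent and there are three linear conditions on the six coefficients.

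\emph{Step 1: the flat sides.} Restricting to $AB$ kills the first two terms, so $b_{q\mid AB}(t)=c\,(t-\tfrac12)(2-t)$. Similarly $b_{q\mid AC}(t)=b\,(t-\tfrac12)(2-t)$. The contact set is finite and contains no interior point of $AB$ or $AC$, so $b,c>0$, and the multiplicities at $A$ are exactly $\vnu_q(A)=(1,1,0)^\top$.

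\emph{Step 2: the curved side.} Since $P_q$ vanishes at $\tfrac12$ and $2$, I will write $P_q(t)=(t-\tfrac12)(2-t)\,Q(t)$ with $\deg Q\le2$. I will compute $Q$ explicitly as a function of $(a,b,c)$. For example, the contribution of $\ell_{AB}\ell_{AC}$ is $t/2$, because $t^2\ell_{AB}\ell_{AC}(t,t^{-1})=\tfrac12t(t-\tfrac12)(2-t)$. Nonnegativity on $\Gamma$ means $Q\ge0$ on $[\tfrac12,2]$. Moreover, $Q$ has no zero in $(\tfrac12,2)$, since such a zero would be an extra contact $G_u$.

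\emph{Step 3: $Q$ must vanish at an endpoint.} If $Q>0$ on $[\tfrac12,2]$, I will set $\tilde q:=q-\eps\,\ell_{AB}\ell_{AC}$.
\begin{itemize}
\item $\tilde q$ has the same restrictions to $AB$ and $AC$ as $q$.
\item Its quotient equals $Q-\eps t/2$, which is still positive for small $\eps$.
\item Its Hessian stays non-PSD for small $\eps$.
\end{itemize}
Lemma~\ref{lem:boundary-reduction-bounded} then gives $\tilde q\in\Ncone(\Gset)$. Hence $q=\tilde q+\eps\ell_{AB}\ell_{AC}$ is a nontrivial conic decomposition, which contradicts extremality. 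So $Q(\tfrac12)=0$ or $Q(2)=0$.

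Let $m$ and $m'$ denote the multiplicities of $\tfrac12$ and $2$ as roots of $Q$, so $m,m'\le 2$. Then
\[
\vnu_q(B)=(1,0,1+m)^\top,\qquad \vnu_q(C)=(0,1,1+m')^\top.
\]
From the earlier computations, $f_{1/2}$ has $\vnu(A)=(1,1,0)^\top$, $\vnu(B)=(1,0,3)^\top$ and $\vnu(C)=(0,1,1)^\top$. The symmetric statement holds for $f_2$.

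\emph{Step 4: conclusion by dominance.}
\begin{itemize}
\item If $m'=0$, then $f_{1/2}$ dominates $q$ at every contact. Lemma~\ref{lem:subtraction} then forces $q$ to be spanned by $f_{1/2}$.
\item If $m=0$, then $f_2$ dominates $q$ symmetrically, and $q$ is spanned by $f_2$.
\end{itemize}

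\emph{Step 5: the remaining case $m,m'\ge1$ (the main obstacle).} Here $Q=k(t-\tfrac12)(2-t)$ with $k\ge0$, so $P_q=k(t-\tfrac12)^2(2-t)^2$, and neither $f_{1/2}$ nor $f_2$ dominates $q$. I will first try to show that this case is infeasible by direct coefficient comparison. Matching $Q$ against the explicit formula in $(a,b,c)$ gives two or three linear conditions. I expect these to force $b$ or $c$ to be $\le0$, or to force $k<0$, contradicting Step 1.

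If that computation does not close the case, I will instead look for a decomposition. I would try to write $q$ as a nonnegative combination of $f_{1/2}$, $f_2$ and $\ell_{AB}\ell_{AC}$ whose restrictions match $q$ on all three boundary pieces. Equality would then follow from Lemma~\ref{lem:contact-uniqueness}-type uniqueness, together with the fact that a quadratic with $P\equiv0$ is a multiple of $\ell_\Gamma$, evaluated at $A$ as in earlier proofs. Extremality would then again reduce $q$ to $f_{1/2}$ or $f_2$. This algebraic check of the doubly degenerate endpoint case is the step I expect to need the most care.
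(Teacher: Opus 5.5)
Your proposal follows the paper's proof. With \(m_B=1+m\) and \(m_C=1+m'\) in the paper's notation, you use the same case split: the cases \(m'=0\) and \(m=0\) are settled by the same dominance comparison with \(f_{1/2}\) and \(f_2\), and the doubly degenerate case \(m,m'\ge 1\) is the one the paper eliminates by coefficient comparison.

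The step you left as an expectation closes in one line, exactly as you predicted:
\begin{itemize}
\item \(P_q(0)=R_{22}\), so \(Q(0)=P_q(0)/\bigl((-\tfrac12)\cdot 2\bigr)=-R_{22}\).
\item From \(b_{q\mid AB}(t)=c(t-\tfrac12)(2-t)\) you get \(R_{22}=-c\), hence \(Q(0)=c>0\).
\item But \(k(t-\tfrac12)(2-t)\) equals \(-k\le 0\) at \(t=0\), a contradiction.
\end{itemize}
Equivalently, your explicit formula is \(Q(t)=bt^2+\tfrac{a-b-c}{2}t+c\), and matching it against \(k(t-\tfrac12)(2-t)\) forces \(b=c=-k\le 0\), contradicting \(b,c>0\). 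This is the paper's contradiction \(R_{22}=\kappa>0\) versus \(R_{22}=-\mu<0\), so your fallback decomposition is never needed.

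Your Step 3 is harmless but redundant. The dominance in Step 4 already applies whenever \(m'=0\) (respectively \(m=0\)), whatever the other multiplicity is.
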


\begin{proof}
	Since
	\(Z(q)=\{A,B,C\}\), the boundary restrictions of $q$ at $AB$ and $AC$ have to be
\(
	b_{q\mid AB}(t)=\mu(t-1/2)(2-t)\) and $
	b_{q\mid AC}(t)=\lambda(t-1/2)(2-t)$ for some $
	\lambda,\mu>0$. We will study the contact multiplicity vector components  \(m_B:=[\vnu_q(B)]_\Gamma\) and
\(m_C:=[\vnu_q(C)]_\Gamma\).  
The finiteness of \(Z(q)\) ensures that
\(P_q\) is nonzero.  Since \(P_q\) has degree at most four, we have
\(m_B,m_C\geq1\) and \(m_B+m_C\leq4\).

We first rule out the possibility that \(m_B,m_C\geq2\). In such case we have
\(m_B=m_C=2\), and hence the boundary restriction of $q$ on $\Gamma$ has to be 
\(
	P_q(t)=\kappa(t-1/2)^2(2-t)^2\) for some \(
	\kappa>0.
\) Comparing the coefficients of \(P_q\) and \(b_{q\mid AB}\) in \eqref{eq:Pq} yields
\(R_{22}=\kappa>0\) and \(R_{22}=-\mu<0\), a contradiction.  Therefore,
we should have either \(m_B=1\) or \(m_C=1\).

Suppose that \(m_C=1\). Then we have \(m_B\leq3\). 
Direct computation then yield
\[
\begin{aligned}
	\vnu_{f_{1/2}}(A)
	&=(1,1,0)^\top=\vnu_q(A),\\
	\vnu_{f_{1/2}}(B)
	&=(1,0,3)^\top\geq(1,0,m_B)^\top=\vnu_q(B),\\
	\vnu_{f_{1/2}}(C)
	&=(0,1,1)^\top=\vnu_q(C).
\end{aligned}
\]
Lemma~\ref{lem:subtraction} and extremality forces \(q\) to be spanned by 
\(f_{1/2}\). In the case when \(m_B=1\), the same argument with symmetry forces $q$ to be spanned by \(f_2\). Hence the claim is proven. 
\end{proof}

\begin{lemma}\label{lem:pattern_ABG}
	Suppose that \(q:=q_{R,\vr,p}\in\Ncone(\Gset)\) is an extreme ray that satisfies $R\not\succeq 0$ and \(Z(q)=\{A,B,G_v\}\). Then $q$ is spanned by
	\(g^1_{v,1/2}\).
\end{lemma}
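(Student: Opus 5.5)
The plan is to follow the template of the case \(Z(q)=\{A,B\}\) in Lemma~\ref{lem:two-contact-B}. First I match the \(AC\)-restriction of \(q\) with a conic combination of \(g^1_{v,1/2}\) and \(f_v\). Then I kill the difference using Lemma~\ref{lem:contact-uniqueness}\textup{(i)}. Finally, extremality together with the absence of the contact \(C\) discards \(f_v\). Since \(G_v\in\Gamma^\circ\), we have \(v\in(\tfrac12,2)\). Moreover \(w=\tfrac12<(2+3v)/4\) for every such \(v\), so \(g^1_{v,1/2}\) is one of the rays listed in Theorem~\ref{thm:main}.

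\emph{Step 1: the restriction to \(AC\).} Since \(A\in Z(q)\) and \(C\notin Z(q)\), the restriction \(b_{q\mid AC}\) is nonnegative on \([\tfrac12,2]\) and vanishes at \(t=\tfrac12\). So \(b_{q\mid AC}(t)=(t-\tfrac12)\ell(t)\) for an affine \(\ell\). As in Lemma~\ref{lem:two-contact-B}, \(\ell\geq0\) on \([\tfrac12,2]\) by continuity, and hence
\[
b_{q\mid AC}(t)=(t-\tfrac12)\bigl[\lambda(t-\tfrac12)+\mu(2-t)\bigr],\qquad \lambda,\mu\geq0 .
\]
By \eqref{eq:bounded:g-bottom} with \(w=\tfrac12\), we have \(b_{g^1_{v,1/2}\mid AC}(t)=(t-\tfrac12)^2\). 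Proposition~\ref{prop:candidates} gives \(b_{f_v\mid AC}(t)=(t-\tfrac12)(2-t)\). Hence
\[
d:=q-\lambda g^1_{v,1/2}-\mu f_v
\]
vanishes identically on \(AC\).

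\emph{Step 2: \(d=0\).} I check the hypotheses of Lemma~\ref{lem:contact-uniqueness}\textup{(i)} for \(d\).
\begin{enumerate}
\item All three quadratics \(q\), \(g^1_{v,1/2}\), \(f_v\) vanish at \(B\), so \(d(B)=0\).
\item \(q\geq0\) on \(\Gamma\) and \(q(G_v)=0\) with \(v\) interior, so \(t=v\) is an interior minimizer of \(P_q\) and is therefore a double zero of \(q(t,t^{-1})\).
\item By \eqref{eq:bounded:g-bottom}, \(\widehat P_{g^1_{v,1/2}}(t)=(t-v)^2(t+\tfrac32\alpha_{v,1/2}-2)\), so \(v\) is a double zero for \(g^1_{v,1/2}\). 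By \eqref{eq:f-curve}, \(P_{f_v}(t)=(t-\tfrac12)(2-t)(t-v)^2\), so \(v\) is also a double zero for \(f_v\).
\end{enumerate}
Thus \(d(t,t^{-1})\) has a double zero at \(t=v\in(\tfrac12,2)\), \(d(B)=0\), and \(d\) vanishes on \(AC\). Lemma~\ref{lem:contact-uniqueness}\textup{(i)} gives \(d=0\), that is, \(q=\lambda g^1_{v,1/2}+\mu f_v\).

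\emph{Step 3: extremality.} This is a conic decomposition into members of \(\Ncone(\Gset)\), by Proposition~\ref{prop:candidates}. Extremality forces \(q\) to be spanned by one summand. Since \(f_v(C)=0\) while \(C\notin Z(q)\), the ray of \(q\) cannot be that of \(f_v\). Hence \(\mu=0\), and \(\lambda>0\) because \(q\neq0\). Therefore \(q=\lambda g^1_{v,1/2}\).

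The only step needing care is the bookkeeping in Step 2. One must confirm from \eqref{eq:bounded:g-bottom} and \eqref{eq:f-curve} that \(g^1_{v,1/2}\) and \(f_v\) both vanish at \(B\) and both have a double zero at \(v\) on \(\Gamma\). Given those displayed factorizations, this is immediate, so I do not expect a genuine obstacle.
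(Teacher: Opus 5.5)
Your proposal is correct and follows the paper's proof essentially step for step. You factor \(b_{q\mid AC}\) as a conic combination of the \(AC\)-restrictions of \(g^1_{v,1/2}\) and \(f_v\), kill the difference with Lemma~\ref{lem:contact-uniqueness}\textup{(i)} using the common zero at \(B\) and the double zero at \(G_v\), and then use extremality together with \(f_v(C)=0\) and \(C\notin Z(q)\) to discard \(f_v\); your extra checks (admissibility of \(w=\tfrac12<(2+3v)/4\) and the explicit argument forcing \(\mu=0\)) are valid refinements of that same argument.
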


\begin{proof}
	The proof here is quite close to that in Lemma~\ref{lem:two-contact-B}. 
Since the vertice $A$ is a contact, looking at the boundary restriction on $AC$ we should have
\(
	b_{q\mid AC}(t)=(t-1/2)\ell(t),
\)
where \(\ell\) is an affine function.  Using its value at $1/2$ and $2$ for description, we have
\[
\ell(t)
=\lambda (t-\tfrac12)
+\mu(2-t),\text{ where }\lambda:=\tfrac23\ell (2)\text{ and }\mu:=\tfrac23\ell(\tfrac12).
\]
Since \(t-1/2>0\) on
\((1/2,2]\) and \(b_{q\mid AC}\geq0\) on \([1/2,2]\), we
have \(\ell\geq0\) on this interval, including at \(t=1/2\) by
continuity.  Thus $\lambda,\mu\ge0$ and we have
\[
	b_{q\mid AC}(t)
	=(t-\tfrac12)
	\left[\lambda(t-\tfrac12)+\mu(2-t)\right].
\]
Observe that the two terms in brackets are the \(AC\)-boundary restrictions of
\(g^1_{v,1/2}\) and \(f_v\), respectively.  Setting
\[
	h:=q-\lambda g^1_{v,1/2}-\mu f_v,
\]
Then \(h\) vanishes identically on \(AC\).  All three quadratics in the definition of $h$ vanish at \(B\), so \(h(B)=0\).  All the three quadratics' boundary restriction on $\Gamma^\circ$ 
also have a double zero at \(G_v\), and hence $P_h$ has a zero of multiplicity at least two there.
Lemma~\ref{lem:contact-uniqueness}\textup{(i)} yields \(h=0\). 
 Therefore
\(
	q=\lambda g^1_{v,1/2}+\mu f_v.
\)
Extremality of $q$ forces it to be spanned by either $g^1_{v,1/2}$ or $f_v$. But the contact set of $f_v$ consists of an additional contact $C$, so $q$ could only be spanned by $g^1_{v,1/2}$.
\end{proof}

\begin{lemma}\label{lem:pattern_BYG}
	Suppose that \(q:=q_{R,\vr,p}\in\Ncone(\Gset)\) is an extreme ray that satisfies $R\not\succeq 0$ and \(Z(q)=\{B,Y_w,G_v\}\). Then $q$ is spanned by
	\(g^1_{v,w}\), where
	\(1/2<w<{(2+3v)}/{4}\).
\end{lemma}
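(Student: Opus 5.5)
The plan follows the template of Lemmas~\ref{lem:two-contact-B} and \ref{lem:pattern_ABG}. Match the restriction of \(q\) on the side carrying the flat contact, subtract the corresponding multiple of \(g^1_{v,w}\), and then kill the remainder with Lemma~\ref{lem:contact-uniqueness}.

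\textbf{Step 1: the \(AC\)-restriction.} Since \(Y_w\in AC^\circ\) is an interior zero of the nonnegative quadratic \(b_{q\mid AC}\), and \(b_{q\mid AC}\) is not identically zero (because \(Z(q)\) is finite), we have
\[
b_{q\mid AC}(t)=\lambda(t-w)^2, \qquad \lambda>0.
\]

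\textbf{Step 2: the contact on \(\Gamma^\circ\).} The point \(G_v\in\Gamma^\circ\) is an interior zero of the nonnegative polynomial \(P_q\) on \([\tfrac12,2]\). Hence \(v\in(\tfrac12,2)\) and \(P_q\) has at least a double zero at \(t=v\).

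\textbf{Step 3: subtract and apply uniqueness.} Provisionally take \(w<(2+3v)/4\), so that \(g^1_{v,w}\) is defined; this restriction is justified in Step 4. By \eqref{eq:bounded:g-bottom}, \(b_{g^1_{v,w}\mid AC}(t)=(t-w)^2\). In addition, \(g^1_{v,w}(B)=0\), and \(P_{g^1_{v,w}}=(t-\tfrac12)\widehat P_{g^1_{v,w}}\) has the factor \((t-v)^2\). Set \(h:=q-\lambda g^1_{v,w}\). Then:
\begin{itemize}
\item \(h\) vanishes identically on \(AC\);
\item \(h(B)=0\);
\item \(h(t,t^{-1})\) has a double zero at \(t=v\in(\tfrac12,2)\).
\end{itemize}
Lemma~\ref{lem:contact-uniqueness}\textup{(i)} therefore gives \(h=0\), i.e., \(q=\lambda g^1_{v,w}\).

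\textbf{Step 4: the parameter range.} This is the main obstacle. The membership \(w\in(\tfrac12,2)\) is immediate. The real issue is showing \(w<(2+3v)/4\) before \(g^1_{v,w}\) may be used, since the family is only defined on that range. I would argue without \(g^1_{v,w}\), working directly from the coefficient data.

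First note that \(v\) and \(w\) determine the affine line through \((v+\tfrac12,0)\) and \(G_v\). The double contact at \(Y_w\) together with the tangency at \(G_v\) forces the quadratic to lie in the pencil spanned by the square of that line, \(f_v\), and \(\ell_\Gamma\); this is exactly the structure in \eqref{eq:g}. Concretely:
\begin{enumerate}
\item Write \(P_q(t)=(t-\tfrac12)(t-v)^2(at+b)\), using that \(q(B)=0\) and that \(G_v\) is a double zero.
\item Match against \(b_{q\mid AC}=\lambda(t-w)^2\). The shared coefficients are \(R_{11}\) and \(r_1\), and the constant term involves \(p,R_{22},r_2\).
\item Together with \(q(B)=0\), this determines \(q\) up to scale, and shows \(q\) equals \(\lambda\) times the expression in \eqref{eq:g}, now treated as a formula valid for any \(w\).
\end{enumerate}

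Nonnegativity then does the rest. The cubic factor forces \(at+b\ge0\) on \([\tfrac12,2]\), which gives \(\alpha_{v,w}\ge \tfrac23(2-\tfrac12)\). Nonnegativity of the affine reduced restriction \(\widehat b_{q\mid AB}\) at \(t=2\) gives, via the displayed endpoint formula, the condition \(\sqrt{\alpha_{v,w}}\ge1\), i.e., \(2-w\ge \tfrac34(2-v)\), i.e., \(w\le(2+3v)/4\). Equality is excluded because, by \eqref{eq:g-overlap}, it would make \(q\) an affine square, contradicting \(R\not\succeq0\).

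If this bookkeeping becomes messy, the fallback is a multiplicity-dominance comparison (Lemma~\ref{lem:subtraction}) using the affine square \(\bigl(x_1+\tfrac v2x_2-v-\tfrac12\bigr)^2\) in the regime \(w\ge(2+3v)/4\). In that regime this square dominates \(q\)'s multiplicities at \(B\), \(Y_w\) and \(G_v\), which again contradicts \(R\not\succeq0\).
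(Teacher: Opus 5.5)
Your main line is correct and is essentially the paper's proof. You write $b_{q\mid AC}=\lambda(t-w)^2$ and annihilate $q-\lambda g^1_{v,w}$ with Lemma~\ref{lem:contact-uniqueness}\textup{(i)}; the identities $b_{g^1_{v,w}\mid AC}(t)=(t-w)^2$, $g^1_{v,w}(B)=0$ and $(t-v)^2\mid P_{g^1_{v,w}}$ hold as formulas for every $w<2$, so no provisional restriction on $w$ is needed. You then read off $\alpha_{v,w}\geq1$, i.e.\ $w\leq(2+3v)/4$, from the factor $t+\tfrac32\alpha_{v,w}-2$ at $t=\tfrac12$, and exclude equality by \eqref{eq:g-overlap}.

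You should drop the two extra remarks, however. First, $\widehat b_{q\mid AB}(2)\geq0$ alone does not force $\sqrt{\alpha_{v,w}}\geq1$: as a function of $\sqrt{\alpha_{v,w}}$, the second factor in the endpoint formula vanishes at $(1-v^2)/(v^2-v+1)\in(0,1)$ when $v<1$. This is harmless, since $\alpha_{v,w}\geq1$ already gives it. Second, the fallback fails because $(x_1+\tfrac v2x_2-v-\tfrac12)^2$ vanishes on $AC$ only at $Y_{(2+3v)/4}$, so it cannot dominate $q$ at $Y_w$ when $w>(2+3v)/4$.
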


\begin{proof}
Since $Y_w$ is a contact on $AC^\circ$, the boundary restriction of $q$ at $AC$ has to be 
\(
	b_{q\mid AC}(t)=\lambda(t-w)^2\) where $\lambda>0$.
Noting that \(b_{g^1_{v,w}\mid AC}(t)=(t-w)^2\), the difference 
\(
	q-\lambda g^1_{v,w}
\)
vanishes identically on \(AC\).  It also vanishes at \(B\), and its
restriction on the hyperbolic boundary piece has a double zero at \(G_v\).
Lemma~\ref{lem:contact-uniqueness}\textup{(i)} gives \(q-\lambda g^1_{v,w}=0\), so
$q$ is spanned by \(g^1_{v,w}\). 
It remains to recover the admissible range of \(w\).  Note that
\[
	P_{g^1_{v,w}}(t)
	=
	(t-\tfrac12)(t-v)^2
	\left(t+\frac32\alpha_{v,w}-2\right)\ge 0,\ \forall t\in [1/2, 2].
\]
Consequently, for nonnegativity of $P_{g^1_{v,w}}(t)$ to hold we need \(1/2+(3/2)\alpha_{v,w}-2\ge 0\), i.e., 
$	\alpha_{v,w}\geq1$. 
Since \(v,w<2\), applying the definition of $\alpha_{v,w}$ to the above inequality we have $
	4(2-w)\geq3(2-v)$, i.e., $
	w\leq{(2+3v)}/{4}.
$
Note that the equality could not be achieved in the above inequality; otherwise $q = \lambda g_{v,w}^1$ is an affine squares described in \eqref{eq:g-overlap} and contradicts the assumption that $R\not\succeq 0$. Therefore the admissible range of  $w$ is 	\(1/2<w<{(2+3v)}/{4}\).
\end{proof}

\begin{lemma}\label{lem:pattern_ABCG}
	Suppose that \(q:=q_{R,\vr,p}\in\Ncone(\Gset)\) is an extreme ray that satisfies $R\not\succeq 0$ and \(Z(q)=\{A,B,C,G_u\}\). Then $q$ is spanned by
	\(f_u\).
\end{lemma}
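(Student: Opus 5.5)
Since \(Z(q)=\{A,B,C,G_u\}\) with \(u\in(\tfrac12,2)\), the multiplicities are essentially forced by the budget \eqref{eq:budget}, and the plan is to show \(\vnu_{f_u}\geq\vnu_q\) at every contact and then invoke the multiplicity-dominance principle (Lemma~\ref{lem:subtraction}). The flat sides come first. Both endpoints of \(AB\) and of \(AC\) are contacts and the restrictions have degree at most two and are not identically zero, because \(Z(q)\) is finite. Hence
\[
b_{q\mid AB}(t)=\mu(t-\tfrac12)(2-t),\qquad b_{q\mid AC}(t)=\lambda(t-\tfrac12)(2-t),\qquad \lambda,\mu>0.
\]
Consequently \([\vnu_q(A)]_{AB}=[\vnu_q(A)]_{AC}=[\vnu_q(B)]_{AB}=[\vnu_q(C)]_{AC}=1\). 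These match the corresponding components for \(f_u\), which by the computations in Proposition~\ref{prop:candidates} are \(b_{f_u\mid AB}(t)=u^2(t-\tfrac12)(2-t)\) and \(b_{f_u\mid AC}(t)=(t-\tfrac12)(2-t)\).

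Next comes the curved side. Since \(P_q\) is a nonzero polynomial of degree at most four, it vanishes at \(\tfrac12\) and \(2\), and \(G_u\) is an interior minimizer of the nonnegative \(P_q\) on \([\tfrac12,2]\), so \(u\) is a root of multiplicity at least two. Counting roots gives
\[
P_q(t)=\kappa(t-\tfrac12)(2-t)(t-u)^2,\qquad \kappa\neq0,
\]
with sign fixed by nonnegativity. Thus \([\vnu_q(B)]_\Gamma=[\vnu_q(C)]_\Gamma=1\) and \([\vnu_q(G_u)]_\Gamma=2\), exactly as for \(P_{f_u}\) in \eqref{eq:f-curve}. Therefore \(\vnu_{f_u}(\vz)=\vnu_q(\vz)\) for every \(\vz\in Z(q)\), and the dominance hypothesis \eqref{eq:multiplicity-dominance} holds with \(h=f_u\). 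We also have \(f_u\in\Ncone(\Gset)\) and \(f_u\neq0\) by Proposition~\ref{prop:candidates}, and \(R\not\succeq0\) with \(\card(Z(q))<\infty\) by hypothesis. Lemma~\ref{lem:subtraction} then gives that \(f_u\) is spanned by \(q\), which means \(q\) is a positive multiple of \(f_u\).

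An alternative route, avoiding Lemma~\ref{lem:subtraction}, is the coefficient argument used in Proposition~\ref{prop:candidates}. From \(P_{q-\kappa f_u}\equiv0\) one gets \(q=\kappa f_u+\delta\ell_\Gamma\). Evaluating at \(A\), where \(q(A)=f_u(A)=0\) and \(\ell_\Gamma(A)=\tfrac34\neq0\), gives \(\delta=0\).

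There is no real obstacle in either route. The only care needed is justifying that the multiplicity at \(G_u\) is exactly two rather than more. This follows because \(P_q\) has degree at most four and already has the simple roots \(\tfrac12\) and \(2\). The exact value is in any case irrelevant for the inequality \(\vnu_{f_u}\geq\vnu_q\) once the factorization above is established.
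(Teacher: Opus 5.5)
Your proposal is correct, but your main argument is not the one the paper uses. You compute the full contact profile of \(q\):
\begin{itemize}
\item both flat restrictions are forced to be positive multiples of \((t-\tfrac12)(2-t)\);
\item the curved restriction is forced to be \(P_q=\kappa(t-\tfrac12)(2-t)(t-u)^2\).
\end{itemize}
You then check that \(\vnu_{f_u}(\vz)=\vnu_q(\vz)\) at all four contacts and conclude via the multiplicity-dominance principle (Lemma~\ref{lem:subtraction}) together with extremality.

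The paper's proof is exactly your ``alternative route.'' From \(P_q=\kappa P_{f_u}\), coefficient comparison in \eqref{eq:Pq} gives \(q=\kappa f_u+\delta\ell_\Gamma\). Then \(q(A)=f_u(A)=0\neq\ell_\Gamma(A)\) forces \(\delta=0\).

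That route is shorter and proves more. It uses only the curved restriction and the single condition \(q(A)=0\), and never touches the multiplicities on \(AB\) or \(AC\). It also never uses extremality or \(R\not\succeq0\). It therefore shows directly that every element of \(\Ncone(\Gset)\) vanishing at \(A,B,C,G_u\) is a nonnegative multiple of \(f_u\).

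Your dominance route needs the extra flat-side bookkeeping. It also rests on the heavier Lemma~\ref{lem:subtraction}, and through it on Lemma~\ref{lem:boundary-reduction-bounded} and on extremality of \(q\). In exchange, it follows the same template the paper uses for \(Z(q)=\{A,B,C\}\) in Lemma~\ref{lem:pattern_ABC}. It needs only a nonnegative candidate with dominating contact multiplicities, not the fact that \(\ell_\Gamma\) spans the kernel of \(q\mapsto P_q\).
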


\begin{proof}
	Since $B, C, G_u$ are contacts on $\Gamma$ and $G_u$ is an on $\Gamma^\circ$, the boundary restriction $P_q$ must satisfy
	\(
	P_q(t)=\kappa(t-\tfrac12)(2-t)(t-u)^2
	=\kappa P_{f_u}(t)
	\) for some $\kappa>0$. Therefore $P_{q-\kappa f_u}$ is a zero quartic polynomial. Recalling the coefficients of $P_q$ in \eqref{eq:Pq} we have $q-\kappa f_u = \delta \ell_\Gamma$ for some $\delta$. 
	Noting that \(q(A) = f_u(A)=0\) and
	\(\ell_\Gamma(A)>0\), evaluating at \(A\) yields \(\delta=0\). Thus $q$ is a nonnegative multiple of $f_u$. 
\end{proof}

We are now ready to classify all extreme rays in the non-PSD case.

\begin{proposition}
	\label{prop:non-PSD-complete}
	If $q:=q_{R,\vr,p}\in\Ncone(\Gset)$ is an extreme ray with $R\not\succeq 0$, then it is in the list stated in Theorem \ref{thm:main}. 
\end{proposition}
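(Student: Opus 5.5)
The plan is to assemble the case analysis already carried out, splitting according to the cardinality of the contact set \(Z(q)\). Let \(q:=q_{R,\vr,p}\) span an extreme ray with \(R\not\succeq0\). As noted in the road map, every zero of \(q\) lies on \(\partial\Gset\), so the zeros of \(q\) are exactly its contacts.

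\emph{Infinite and small contact sets.} If \(\card(Z(q))=\infty\), Proposition~\ref{prop:infinite-zeros} shows that \(q\) is either a convex hull boundary-product ray or the hyperbola ray. These are items (ii) and (iii) of Theorem~\ref{thm:main}. Now assume \(Z(q)\) is finite. Proposition~\ref{prop:has-zero} rules out \(\card(Z(q))=0\), and Proposition~\ref{prop:one-contact-non-psd} rules out \(\card(Z(q))=1\). If \(\card(Z(q))=2\), Corollary~\ref{cor:two-zero-exceptions} gives \(q\) as a multiple of \(g^1_{1/2,w}\) or \(g^2_{1/2,w}\) with \(w\in[1/2,7/8)\). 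We must check that these parameters lie in the admissible range of item (v). With \(v=1/2\) we have \(v<2\), and the bound \((2+3v)/4=7/8\) matches exactly, so these are listed bitangent rays.

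\emph{Three or four contacts.} If \(3\le\card(Z(q))<\infty\), Lemma~\ref{lem:finite-possibilities} reduces to the four patterns \(\{A,B,C\}\), \(\{A,B,G_v\}\), \(\{B,Y_w,G_v\}\) and \(\{A,B,C,G_u\}\), up to coordinate exchange. These are handled in turn by Lemmas~\ref{lem:pattern_ABC}, \ref{lem:pattern_ABG}, \ref{lem:pattern_BYG} and \ref{lem:pattern_ABCG}.
\begin{enumerate}
\item The pattern \(\{A,B,C\}\) gives \(f_{1/2}\) or \(f_2\), which are in item (iv).
\item The pattern \(\{A,B,C,G_u\}\) gives \(f_u\) with \(u\in(1/2,2)\), also in item (iv).
\item The pattern \(\{A,B,G_v\}\) gives \(g^1_{v,1/2}\) with \(v\in(1/2,2)\).
\item The pattern \(\{B,Y_w,G_v\}\) gives \(g^1_{v,w}\) with \(1/2<w<(2+3v)/4\).
\end{enumerate}
When coordinates are exchanged, the symmetric families \(g^2\) appear instead, and the set of \(f_u\) is invariant under the exchange \(u\mapsto u^{-1}\).

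\emph{Main obstacle: parameter ranges.} The one point needing care is to confirm that each parameter pair produced lies in the admissible range \(\tfrac12\le v<2\), \(\tfrac12\le w<\tfrac{2+3v}{4}\).
\begin{enumerate}
\item For \(g^1_{v,1/2}\) with \(v\in(1/2,2)\), we need \(1/2<(2+3v)/4\). This holds since \(v>0\).
\item For \(g^1_{v,w}\), the bound on \(w\) is exactly what Lemma~\ref{lem:pattern_BYG} delivers, and \(v\in(1/2,2)\) because \(G_v\in\Gamma^\circ\).
\end{enumerate}
Combining all cases shows that every non-PSD extreme ray appears in the list of Theorem~\ref{thm:main}.
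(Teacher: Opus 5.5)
Your proposal is correct and follows the paper's proof: it uses the same case split on \(\card(Z(q))\) (infinite, zero, one, two, and three or more contacts) and invokes the same propositions, corollary, and pattern lemmas. Your explicit parameter-range checks are a harmless addition that the paper leaves implicit in the statements of those lemmas. These checks are \((2+3v)/4=7/8\) at \(v=\tfrac12\), \(\tfrac12<(2+3v)/4\) for \(g^1_{v,1/2}\), and the coordinate-exchange identity \(f_u(x_2,x_1)=u^2f_{1/u}(x_1,x_2)\).
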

\begin{proof}
	If $q$ has infinitely many contacts,
	Proposition~\ref{prop:infinite-zeros} places it in the list.  If $q$ has
	finitely many contacts, Propositions~\ref{prop:has-zero} and
	\ref{prop:one-contact-non-psd} show that it has at least two contacts.
	Corollary~\ref{cor:two-zero-exceptions} treats the case of exactly two
	contacts, while Lemmas~\ref{lem:finite-possibilities}--\ref{lem:pattern_ABCG}
	treat the case of at least three contacts.  Thus $q$ belongs to the list.
\end{proof}

\subsection{Proof of the main theorem}

We have finished the extreme ray classification and is now ready to prove the main theorem by showing that $\Ncone(\Gset)$ is indeed the conic hull of all the extreme rays listed in Theorem \ref{thm:main}.

\begin{proof}[Proof of Theorem~\ref{thm:main}]
Proposition~\ref{prop:candidates} proves nonnegativity and
extremality of every listed ray in Theorem \ref{thm:main}.  Proposition~\ref{prop:psd}
shows that every extreme ray $q_{R,\vr,p}$ with $R\succeq 0$ is in the list.  For an extreme ray $q_{R,\vr,p}$ with $R\not\succeq 0$,   Proposition~\ref{prop:non-PSD-complete}
 shows that it is in the list. 
Thus the list contains every extreme ray.

It remains to prove the conic-hull assertion.  The cone
\(\Ncone(\Gset)\) is closed because it is the intersection of the closed
halfspaces \(q(\vx)\geq0\), one for each \(\vx\in\Gset\).  It is pointed:
if both \(q\) and \(-q\) belong to the cone, then \(q\) vanishes in the
interior of \(\Gset\) and is the zero polynomial. 
We will now look at a convex base 
\[
	\mathcal A:=\Set{q\in\Ncone(\Gset)\mid\Lambda(q)=1}\text{ where }\Lambda(q):=\int_{\Gset}q(\vx)\,d\vx.
\]
The linear functional $\Lambda$ is positive on every nonzero member of
\(\Ncone(\Gset)\); therefore the above set $\mathcal A$ 
is a nonempty convex set containing exactly one normalized point from
each nonzero ray.  Nonemptiness follows, for example, by normalizing the
constant polynomial \(1\).  The set $\mathcal A$ is closed as the intersection of the convex cone $\Ncone(\Gset)$ and a hyperplane.  It is also bounded:
otherwise, for an unbounded sequence \(q_k\) in the set, choose a
subsequence such that \(\lVert q_k\rVert\to\infty\), where the norm is any
norm on the six-dimensional coefficient space.  A further subsequence of
\(q_k/\lVert q_k\rVert\) converges to a coefficient vector \(q_\infty\)
of norm one.  Closedness of the cone yields
\(q_\infty\in\Ncone(\Gset)\), while continuity of \(\Lambda\) yields
$
	\Lambda(q_\infty)
	=
	\lim_{k\to\infty}{\Lambda(q_k)}/{\lVert q_k\rVert}
	=0,
$
contradicting strict positivity of \(\Lambda\) on nonzero members of the
cone $\Ncone(\Gset)$.  Thus the set $\mathcal A$ is compact.
Its extreme points are precisely the extreme rays of
\(\Ncone(\Gset)\), normalized by \(\Lambda\).  Indeed, a nontrivial
decomposition of a ray in \(\Ncone(\Gset)\) becomes a nontrivial convex combination of two points in $\mathcal A$ after
normalization by $\Lambda$, and the converse follows by removing the
normalization.  By Minkowski's theorem the
set $\mathcal A$ is the convex hull of the normalized extreme rays listed in Theorem \ref{thm:main}.  Rescaling proves that \(\Ncone(\Gset)\) is their conic hull.
\end{proof}

\section{Characterization of the lifted convex hull $\cC(\Gset)$}
\label{sec:G}

Using the extreme-ray classification in Theorem~\ref{thm:main}, we now
construct a finite semidefinite representation of $\cC(\Gset)$.  The key
observation is that every affine-square ray belongs to $\Smat_+^3$, while
every other extreme ray listed in Theorem~\ref{thm:main} vanishes at either
$B=(\tfrac12,2)$ or $C=(2,\tfrac12)$.  Recall from the introduction section the definitions of
\[
    \Ncone_B = \Set{q\in\Ncone(\Gset)\mid q(B)=0}
    \text{ and }
    \Ncone_C = \Set{q\in\Ncone(\Gset)\mid q(C)=0}.
\]
Theorem~\ref{thm:main} then gives
\begin{equation}\label{eq:NG_decomposition}
    \Ncone(\Gset)=\Smat_+^3+\Ncone_B+\Ncone_C.
\end{equation}
Indeed, every extreme ray of $\Ncone(\Gset)$ belongs to one of the three
cones on the right-hand side.  Conversely, each of these cones is contained
in $\Ncone(\Gset)$.

It remains to describe $\Ncone_B$ and $\Ncone_C$.  
By symmetry, it suffices
to treat $\Ncone_B$.  With the help of Lemma \ref{lem:strong-bound-property-B}, we have an
intersection representation
\[
    \Ncone_B
    =\Ncone_1\cap\Ncone_2\cap\Ncone_3\cap\mathcal H_B,
\]
where
\begin{align}
\label{eq:N1-sdr}
    \Ncone_1
    &:=\Set{\mxRrp\ \middle|\
      \widehat b_{q\mid AB}(t)\geq0
      \quad\forall\,t\in[\tfrac12,2]},\\
\label{eq:N2-sdr}
    \Ncone_2
    &:=\Set{\mxRrp\ \middle|\
      b_{q\mid AC}(t)\geq0
      \quad\forall\,t\in[\tfrac12,2]},\\
\label{eq:N3-sdr}
    \Ncone_3
    &:=\Set{\mxRrp\ \middle|\
      \widehat P_q(t)\geq0
      \quad\forall\,t\in[\tfrac12,2]},
\end{align}
and
\begin{equation}\label{eq:HB}
    \mathcal H_B
    :=\Set{\mxRrp\ \middle|\
    \begin{pmatrix}
        1&\tfrac12&2\\
        \tfrac12&\tfrac14&1\\
        2&1&4
    \end{pmatrix}
    \mathbin{\bullet}\mxRrp=0}.
\end{equation}
Note that $\Ncone_1$, $\Ncone_2$, and $\Ncone_3$ are all descriptions of polynomials nonnegative over the interval $[1/2,2]$. We are now ready to characterize $\Ncone_B$, as stated in the following direct corollary of Lemma \ref{lem:strong-bound-property-B}.

\begin{corollary}
\label{cor:NB_as_intersection}
The cone $\Ncone_B$ satisfies
\[
    \Ncone_B
    =\Ncone_1\cap\Ncone_2\cap\Ncone_3\cap\mathcal H_B.
\]
Moreover, the three cones on the right admit the following fixed-size
semidefinite representations:
\begin{align}
\label{eq:N1-explicit}
    \Ncone_1
    ={}&\Set{\mxRrp\ \middle|\
        -R_{12}-\tfrac52R_{22}-2r_2\geq0,\quad
        -R_{12}-4R_{22}-2r_2\geq0},\\
\label{eq:N2-explicit}
    \Ncone_2
    ={}&\Set{\mxRrp\ \middle|\
        \begin{gathered}
        \exists\,\mu\geq0:\\[-2pt]
        \begin{pmatrix}
            \tfrac14R_{22}+r_2+p+\mu
            &\tfrac12(R_{12}+2r_1-\tfrac52\mu)\\
            \tfrac12(R_{12}+2r_1-\tfrac52\mu)
            &R_{11}+\mu
        \end{pmatrix}\succeq0
        \end{gathered}},\\
\label{eq:N3-explicit}
    \Ncone_3
    ={}&\Set{\mxRrp\ \middle|\
        \begin{gathered}
        \exists\,\Lambda^-,\Lambda^+\in\Smat_+^2
        \text{ satisfying}\\
        2r_2+R_{12}+\tfrac12p+\tfrac12r_1+\tfrac18R_{11}
        =-\tfrac12\lambda^-_{00}+2\lambda^+_{00},\\
        2R_{12}+p+r_1+\tfrac14R_{11}
        =\lambda^-_{00}-\lambda^-_{01}
         +4\lambda^+_{01}-\lambda^+_{00},\\
        2r_1+\tfrac12R_{11}
        =2\lambda^-_{01}-\tfrac12\lambda^-_{11}
         +2\lambda^+_{11}-2\lambda^+_{01},\\
        R_{11}=\lambda^-_{11}-\lambda^+_{11},
        \end{gathered}}
\end{align}
where
\[
    \Lambda^-
    =\begin{pmatrix}
        \lambda^-_{00}&\lambda^-_{01}\\
        \lambda^-_{01}&\lambda^-_{11}
    \end{pmatrix},
    \qquad
    \Lambda^+
    =\begin{pmatrix}
        \lambda^+_{00}&\lambda^+_{01}\\
        \lambda^+_{01}&\lambda^+_{11}
    \end{pmatrix}.
\]
\end{corollary}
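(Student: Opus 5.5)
The intersection identity follows directly from Lemma~\ref{lem:strong-bound-property-B}. The hyperplane $\mathcal H_B$ is exactly $q(B)=0$: the matrix in \eqref{eq:HB} is $(1,\tfrac12,2)^\top(1,\tfrac12,2)$, and its inner product with the coefficient matrix reproduces \eqref{eq:vanishB}. For $q$ in $\mathcal H_B$ the factorizations \eqref{eq:B-forced-factors} hold, so item \ref{itm:q_in_NB_analytical} of the lemma reads $q\in\Ncone_1\cap\Ncone_2\cap\Ncone_3$. The lemma's equivalence \ref{itm:q_in_NB}$\iff$\ref{itm:q_in_NB_analytical} then gives the equality. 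The remaining work is to derive the three explicit representations, one for each degree.

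For $\Ncone_1$, the reduced restriction $\widehat b_{q\mid AB}(t)=-R_{22}t-R_{12}-2R_{22}-2r_2$ is affine. It is therefore nonnegative on $[\tfrac12,2]$ if and only if it is nonnegative at both endpoints. Substituting $t=\tfrac12$ and $t=2$ gives exactly the two linear inequalities in \eqref{eq:N1-explicit}.

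For $\Ncone_2$, I use the Markov--Luk\'acs theorem for a quadratic $b$ nonnegative on $[\tfrac12,2]$. It states that $b(t)=\sigma(t)+\mu(t-\tfrac12)(2-t)$, where $\sigma$ is a sum of squares of degree at most two and $\mu\ge0$. Equivalently, $b(t)+\mu(t^2-\tfrac52t+1)$ is a globally nonnegative quadratic, that is, its Gram matrix in the basis $(1,t)$ is PSD.

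For $b_{q\mid AC}$ the Gram matrix has diagonal entries $\tfrac14R_{22}+r_2+p+\mu$ and $R_{11}+\mu$, and off-diagonal entry $\tfrac12(R_{12}+2r_1-\tfrac52\mu)$, which is \eqref{eq:N2-explicit}. Sufficiency is immediate, since $(t-\tfrac12)(2-t)\ge0$ on the interval. Necessity is the classical statement, which I would either cite or verify directly in the quadratic case. If $b$ already has nonnegative leading coefficient and no real roots inside the interval, take $\mu$ large enough or zero. Otherwise, choose $\mu$ so that the shifted quadratic has zero discriminant.

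For $\Ncone_3$, the reduced restriction $\widehat P_q$ is a cubic, and the odd-degree Markov--Luk\'acs form applies:
\[
\widehat P_q(t)=(t-\tfrac12)\,\sigma_1(t)+(2-t)\,\sigma_2(t),
\]
where $\sigma_1,\sigma_2$ are sums of squares of degree at most two. I would write $\sigma_1=(1,t)\Lambda^+(1,t)^\top$ and $\sigma_2=(1,t)\Lambda^-(1,t)^\top$, possibly with the roles of $\Lambda^\pm$ swapped to match the paper's sign conventions. Expanding and matching the coefficients of $t^0,\dots,t^3$ with \eqref{eq:B-forced-factors} yields four linear equalities. For example, the $t^3$ coefficient is $\lambda_{11}^{+}-\lambda_{11}^{-}$ up to the chosen labeling.

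The main obstacle is bookkeeping: getting the constant-term and linear-term identities to agree exactly with those displayed in \eqref{eq:N3-explicit}. The stated constant term $-\tfrac12\lambda^-_{00}+2\lambda^+_{00}$ indicates that $\Lambda^-$ multiplies $(t-\tfrac12)$ and $\Lambda^+$ multiplies $(2-t)$, so I would fix that labeling and check each coefficient. The theoretical content, namely the existence of such certificates for nonnegative univariate polynomials on an interval with these fixed degree bounds, I take as the standard Markov--Luk\'acs theorem.
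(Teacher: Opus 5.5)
Your proposal is correct and follows the paper's proof essentially step for step: the intersection identity from Lemma~\ref{lem:strong-bound-property-B} (with $\mathcal H_B$ encoding $q(B)=0$), endpoint evaluation of the affine $\widehat b_{q\mid AB}$, and the even- and odd-degree Markov--Luk\'acs certificates followed by coefficient matching, where your final labeling ($\Lambda^-$ multiplying $t-\tfrac12$, $\Lambda^+$ multiplying $2-t$) reproduces \eqref{eq:N3-explicit} exactly. One correction to your optional direct check for $\Ncone_2$: ``take $\mu$ large enough'' never works, because $(t-\tfrac12)(t-2)<0$ on $(\tfrac12,2)$ makes $b+\mu(t-\tfrac12)(t-2)$ negative inside the interval for large $\mu$ (e.g.\ $b=(t-3)(t-4)$ needs an intermediate $\mu$), so either cite the theorem as the paper does or show that some intermediate $\mu\geq0$ makes the discriminant nonpositive.
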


\begin{proof}
The intersection formula follows directly from Lemma~\ref{lem:strong-bound-property-B}.
The univariate representations below are standard forms of the
Markov--Luk\'acs theorem; see, for example,
\cite{blekherman2012semidefinite}.  For $\Ncone_1$, the polynomial
$\widehat b_{q\mid AB}$ is linear, so its nonnegativity on
$[\tfrac12,2]$ is equivalent to nonnegativity at the two endpoints, which
gives \eqref{eq:N1-explicit}.

For $\Ncone_2$, the Markov--Luk\'acs theorem gives
\[
    b_{q\mid AC}(t)
    =\begin{pmatrix}1&t\end{pmatrix}
      \Lambda
      \begin{pmatrix}1\\t\end{pmatrix}
      +\mu(t-\tfrac12)(2-t),
    \qquad
    \Lambda\succeq0,
    \quad
    \mu\geq0.
\]
Matching coefficients and eliminating the entries of $\Lambda$ gives
\eqref{eq:N2-explicit}.

Finally, a polynomial of degree at most three is nonnegative on
$[\tfrac12,2]$ if and only if it can be written as
\[
    (t-\tfrac12)
    \begin{pmatrix}1&t\end{pmatrix}
    \Lambda^-
    \begin{pmatrix}1\\t\end{pmatrix}
    +(2-t)
    \begin{pmatrix}1&t\end{pmatrix}
    \Lambda^+
    \begin{pmatrix}1\\t\end{pmatrix},
    \qquad
    \Lambda^-,\Lambda^+\succeq0.
\]
Applying this representation to $\widehat P_q$ and matching coefficients
gives \eqref{eq:N3-explicit}.
\end{proof}

We next dualize this description and characterize the lifted convex hull $\cC(\Gset)$.  Let
\[
    z_B:=\begin{pmatrix}1\\[1pt]\tfrac12\\[1pt]2\end{pmatrix},
    \qquad
    M_1(t):=
    \begin{pmatrix}
        0&0&-1\\
        0&0&-\tfrac12\\
        -1&-\tfrac12&-(t+2)
    \end{pmatrix}.
\]
Define
\begin{align}
\label{eq:D1}
    \mathcal D_1
    :={}&\Set{
        \lambda_-M_1(\tfrac12)+\lambda_+M_1(2)
        \ \middle|\
        \lambda_-,\lambda_+\geq0},\\
\label{eq:D2}
    \mathcal D_2
    :={}&\Set{
        \begin{pmatrix}
            y_0&y_1&\tfrac12y_0\\
            y_1&y_2&\tfrac12y_1\\
            \tfrac12y_0&\tfrac12y_1&\tfrac14y_0
        \end{pmatrix}
        \ \middle|\
        \begin{pmatrix}y_0&y_1\\y_1&y_2\end{pmatrix}\succeq0,
        \quad
        \tfrac52y_1-y_2-y_0\geq0},\\
\label{eq:D3}
    \mathcal D_3
    :={}&\Set{
        \begin{pmatrix}
            y_1+\tfrac12y_0
            &y_2+\tfrac12y_1+\tfrac14y_0
            &y_0\\
            y_2+\tfrac12y_1+\tfrac14y_0
            &y_3+\tfrac12y_2+\tfrac14y_1+\tfrac18y_0
            &y_1+\tfrac12y_0\\
            y_0
            &y_1+\tfrac12y_0
            &0
        \end{pmatrix}
        \ \middle|\
        \begin{gathered}
        \begin{pmatrix}
            y_1-\tfrac12y_0&y_2-\tfrac12y_1\\
            y_2-\tfrac12y_1&y_3-\tfrac12y_2
        \end{pmatrix}\succeq0,\\[4pt]
        \begin{pmatrix}
            2y_0-y_1&2y_1-y_2\\
            2y_1-y_2&2y_2-y_3
        \end{pmatrix}\succeq0
        \end{gathered}}.
\end{align}

\begin{theorem}
\label{thm:baseline_hull}
We have
\begin{equation}\label{eq:CF}
    \cC(\Gset)
    =\Set{\mxX\ \middle|\
        \mxX\succeq0,\quad
        \mxX\in\Ncone_B^*,\quad
        \begin{pmatrix}
            1&x_2&x_1\\
            x_2&X_{22}&X_{12}\\
            x_1&X_{12}&X_{11}
        \end{pmatrix}
        \in\Ncone_B^*}.
\end{equation}
Moreover,
\begin{equation}\label{eq:NB_star}
    \Ncone_B^*
    =\mathcal D_1+\mathcal D_2+\mathcal D_3
     +\operatorname{span}\Set{z_Bz_B^\top},
\end{equation}
where $\mathcal D_1$, $\mathcal D_2$, and $\mathcal D_3$ are given in
\eqref{eq:D1}--\eqref{eq:D3}.  Equivalently, $Y\in\Ncone_B^*$ if and only
if there exist $Y_i\in\mathcal D_i$, $i=1,2,3$, and $\lambda\in\R$ such
that
\[
    Y=Y_1+Y_2+Y_3+\lambda z_Bz_B^\top.
\]
In particular, \eqref{eq:CF} is a finite semidefinite extended formulation
of $\cC(\Gset)$.
\end{theorem}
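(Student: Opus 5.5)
The proof runs in two parts: first the characterization \eqref{eq:CF} via conic duality, then the dual description \eqref{eq:NB_star}.

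\emph{Step 1: the hull formula.} By \eqref{eq:KNC_relations}, $\cC(\Gset)=\Ncone(\Gset)^*\cap\{Y_{00}=1\}$. Dualize the sum decomposition \eqref{eq:NG_decomposition}: the dual of a sum of closed convex cones is the intersection of their duals. This gives
\[
\Ncone(\Gset)^*=(\Smat_+^3)^*\cap\Ncone_B^*\cap\Ncone_C^*=\Smat_+^3\cap\Ncone_B^*\cap\Ncone_C^*.
\]
For $\Ncone_C$, note that coordinate exchange is the congruence by the permutation matrix $\Pi$ swapping indices $1$ and $2$. Hence $\Ncone_C=\Pi\Ncone_B\Pi^\top$ and $\Ncone_C^*=\Pi\Ncone_B^*\Pi^\top$. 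So $Y\in\Ncone_C^*$ if and only if $\Pi Y\Pi^\top\in\Ncone_B^*$. Written in coordinates, $\Pi Y\Pi^\top$ is exactly the permuted matrix in \eqref{eq:CF}. Imposing $Y_{00}=1$ finishes this part.

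\emph{Step 2: computing $\Ncone_B^*$.} Corollary~\ref{cor:NB_as_intersection} gives $\Ncone_B=\Ncone_1\cap\Ncone_2\cap\Ncone_3\cap\mathcal H_B$. Since $\mathcal H_B$ is a hyperplane, its dual is $\operatorname{span}\{z_Bz_B^\top\}$; indeed, the matrix in \eqref{eq:HB} is $z_Bz_B^\top$. If the sum of the duals is closed, then
\[
\Ncone_B^*=\Ncone_1^*+\Ncone_2^*+\Ncone_3^*+\operatorname{span}\{z_Bz_B^\top\}.
\]
It remains to show $\Ncone_i^*=\mathcal D_i$, for which I use the moment viewpoint on each univariate cone.
\begin{itemize}
\item $\Ncone_1$ is cut out by two linear inequalities, namely $\widehat b_{q\mid AB}(\tfrac12)\ge0$ and $\widehat b_{q\mid AB}(2)\ge0$. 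Its dual is therefore the cone generated by the two coefficient matrices of these functionals, and I will check that these are $M_1(\tfrac12)$ and $M_1(2)$.
\item $\Ncone_2$ is the preimage of the cone of quadratics nonnegative on $[\tfrac12,2]$ under the linear map $q\mapsto b_{q\mid AC}$. Its dual is the adjoint image of the moment cone $\{(y_0,y_1,y_2)\}$ for that interval. By the Markov--Luk\'acs duality, this moment cone is described by $\begin{pmatrix}y_0&y_1\\y_1&y_2\end{pmatrix}\succeq0$ together with the localizing condition $\tfrac52y_1-y_2-y_0\ge0$. Computing the adjoint gives the matrix pattern of $\mathcal D_2$; for example, the coefficient of $p$ is $y_0$, and that of $R_{22}$ is $\tfrac14 y_0$.
\item $\Ncone_3$ is handled the same way. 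The map is $q\mapsto\widehat P_q$, with cubic moments $(y_0,\dots,y_3)$, and the two localizing matrices come from the weights $t-\tfrac12$ and $2-t$. The adjoint of the coefficient map in \eqref{eq:B-forced-factors} yields the matrix pattern in $\mathcal D_3$.
\end{itemize}
In each case, reading off how $p$, $r_i$ and $R_{ij}$ enter the restriction gives the entries.

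\emph{Main obstacle.} The hard part is justifying that the dual of the intersection equals the sum of the duals without taking a closure. The sum of the dual cones need not be closed automatically, so I will verify a Slater-type condition: some element of $\mathcal H_B$ should lie in the relative interiors of $\Ncone_1,\Ncone_2,\Ncone_3$ simultaneously.

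A natural candidate is $q=f_1+\epsilon g$, perturbed so that
\begin{itemize}
\item $\widehat b_{q\mid AB}$ is strictly positive at both endpoints;
\item $b_{q\mid AC}>0$ on $[\tfrac12,2]$;
\item $\widehat P_q>0$ on $[\tfrac12,2]$;
\item $q(B)=0$ still holds.
\end{itemize}
One option is to start from $\ell^2$ with $\ell$ vanishing at $B$ only, for instance $\ell=x_1-\tfrac12+c(x_2-2)$ with generic $c$, and check strict positivity of the three reduced restrictions. Interiority of a univariate nonnegative cone is just strict positivity on the interval, and the three maps $q\mapsto$ restriction are surjective onto their polynomial spaces, so the preimages have the needed interiors. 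Standard conic duality with one exposed linear subspace (polyhedral $\Ncone_1$ and $\mathcal H_B$) then gives closedness. The moment-cone dual descriptions themselves are routine once the adjoint maps are written out.
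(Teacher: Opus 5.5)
\textbf{Overall route.} Your structure matches the paper's proof. You dualize $\Ncone(\Gset)=\Smat_+^3+\Ncone_B+\Ncone_C$ and handle $\Ncone_C$ by the coordinate swap. You then write $\Ncone_B^*$ as the sum of the duals of $\Ncone_1,\Ncone_2,\Ncone_3,\mathcal H_B$ under a Slater condition, and compute each dual through moment cones and adjoint maps.

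\textbf{The gap.} It lies in the step you flag as the main obstacle: the Slater point is never actually produced, and your concrete candidate cannot work. Take $q=\ell^2$ with $\ell(B)=0$. Then $\nabla q(B)=0$ and $\ell(\tfrac12,t)$ vanishes at $t=2$, so $b_{q\mid AB}(t)=\kappa(2-t)^2$ and hence $\widehat b_{q\mid AB}(2)=0$. Likewise $P_q(t)=\bigl(t\,\ell(t,t^{-1})\bigr)^2$ is the square of a polynomial vanishing at $t=\tfrac12$, so $\widehat P_q(\tfrac12)=0$. Thus every such $\ell^2$ lies on the boundary of the non-polyhedral cone $\Ncone_3$, and no generic choice of $c$ helps. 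Your other suggestion, $f_1+\epsilon g$, is not a candidate until $g$ is specified. Moreover, $f_1$ has reduced restrictions $t-\tfrac12$, $(t-\tfrac12)(2-t)$ and $(2-t)(t-1)^2$, all with zeros on $[\tfrac12,2]$, so $g$ would have to supply all of the strict positivity by itself.

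\textbf{What is needed.} A Slater point must have first-order, not second-order, contact at $B$. Since $\widehat b_{q\mid AB}(2)=-\partial_2q(B)$ and $\widehat P_q(\tfrac12)=\tfrac14\partial_1q(B)-\partial_2q(B)$, it must satisfy $\partial_2q(B)<0$ and $\partial_1q(B)-4\partial_2q(B)>0$. The paper uses $q_0=(2-x_2)+(1-x_1x_2)$. Its reduced restrictions are $\widehat b_{q_0\mid AB}=\tfrac32$, $b_{q_0\mid AC}(t)=\tfrac52-\tfrac12t$ and $\widehat P_{q_0}(t)=2t$, all strictly positive on $[\tfrac12,2]$.

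With this point inserted, the rest of your argument goes through and coincides with the paper's proof. That includes surjectivity of the restriction maps (so interiors pull back), polyhedrality of $\Ncone_1$ and $\mathcal H_B$, and the adjoint computations giving $\mathcal D_1,\mathcal D_2,\mathcal D_3$.
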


\begin{proof}
By \eqref{eq:NG_decomposition},
\[
    \Ncone(\Gset)^*
    =\Smat_+^3\cap\Ncone_B^*\cap\Ncone_C^*.
\]
The cones $\Ncone_B$ and $\Ncone_C$ are interchanged by swapping $x_1$
and $x_2$, so their duals are related by the corresponding simultaneous
row-and-column permutation.  Combining this observation with
\eqref{eq:KNC_relations} and the normalization $Y_{00}=1$ gives
\eqref{eq:CF}.

It remains to establish \eqref{eq:NB_star}.  The cone $\Ncone_B$ satisfies
the relative Slater condition in $\mathcal H_B$.  Indeed,
\[
    q_0(\vx):=(2-x_2)+(1-x_1x_2)
\]
belongs to $\Ncone_B$, and its three interval polynomials are
\[
    \widehat b_{q_0\mid AB}(t)=\tfrac32,
    \qquad
    b_{q_0\mid AC}(t)=\tfrac52-\tfrac12t,
    \qquad
    \widehat P_{q_0}(t)=2t,
\]
which are strictly positive on $[\tfrac12,2]$.  Therefore
\[
    \Ncone_B^*
    =\Ncone_1^*+\Ncone_2^*+\Ncone_3^*+\mathcal H_B^\perp.
\]
The dual of the cone of nonnegative linear polynomials on
$[\tfrac12,2]$ is generated by evaluation at the two endpoints; applying
the adjoint of $q\mapsto\widehat b_{q\mid AB}$ gives
$\Ncone_1^*=\mathcal D_1$.  The degree-two truncated moment cone on the
same interval is
\[
    \Set{(y_0,y_1,y_2)\ \middle|\
    \begin{pmatrix}y_0&y_1\\y_1&y_2\end{pmatrix}\succeq0,
    \quad
    \tfrac52y_1-y_2-y_0\geq0},
\]
and applying the adjoint of $q\mapsto b_{q\mid AC}$ gives
$\Ncone_2^*=\mathcal D_2$.  Likewise, the degree-three truncated moment
cone on $[\tfrac12,2]$ is described by the two localizing matrices in
\eqref{eq:D3}; applying the adjoint of $q\mapsto\widehat P_q$ gives
$\Ncone_3^*=\mathcal D_3$.  Finally,
\[
    \mathcal H_B^\perp
    =\operatorname{span}\Set{z_Bz_B^\top},
\]
which proves \eqref{eq:NB_star}.
\end{proof}

\section{Conclusion} \label{sec:conclusion}

We studied the quadratic convexification of the hyperbolically truncated square $\Gset$. Although \(\conv(\Gset)\) is a triangle, its lifted convex hull retains substantially richer information about the curved boundary. We completely characterized the extreme rays of the cone \(\Ncone(\Gset)\) of quadratic functions nonnegative on \(\Gset\). In addition to affine-square, boundary-product, and hyperbola rays, the classification contains parameterized families of bounded lifted tangent and bitangent rays. The proof combines a separation of the PSD and non-PSD cases with an analysis of the locations and multiplicities of boundary contacts.

Using this classification, we obtained an exact finite semidefinite representation of \(\cC(\Gset)\). The key step is to organize the non-square extreme rays through the two exposed faces associated with the endpoints of the hyperbolic arc and to reduce membership in these faces to univariate polynomial nonnegativity on a fixed interval. While this work follows the general extreme-ray framework of our earlier analysis of an unbounded product-constrained region, compact truncation produces different contact patterns and a distinct finite conic formulation. In particular, organizing the non-square extreme rays through the two exposed faces associated with the endpoints of the hyperbolic arc converts the continuously parameterized tangent and bitangent families into an exact finite semidefinite representation of the lifted convex hull.

\section{Acknowledgment}
The first and second authors are partially supported by AFOSR grant FA9550-25-1-0278. This work was supported in part by OpenAI API credits provided by Clemson University and administered by Clemson University Research Computing and Data (RCD). It also used open-weight language models hosted by Clemson University and made available through the Clemson RCD LLM Service.

Artificial intelligence tools were used during the development of this manuscript after the authors had established the main proof strategy. Specifically, the authors first identified the special structure of the non-PSD case, which reduces nonnegativity to the boundary and allows the analysis to focus on boundary-contact patterns.
They also established that extremality imposes rigid conditions on the locations and multiplicities of the contacts, reducing the possible contact configurations to a small number.
Based on the discovery, the authors then built a road map for completing the classification of all extreme rays of $\Ncone(\Gset)$.   
The OpenAI ChatGPT 5.6 (Sol) model was then used to assist in generating candidate extreme rays and preliminary arguments for analyzing these remaining configurations and constructing the lifted convex hull semidefinite representation. 
The authors independently reviewed and verified all AI-assisted output, corrected it where necessary, and rewrote the proofs to provide more natural, rigorous, and interpretable arguments. The authors take full responsibility for the correctness of all results, proofs, and conclusions presented in the paper.

\bibliography{article}


\begin{thebibliography}{6}
\ifx \bisbn   \undefined \def \bisbn  #1{ISBN #1}\fi
\ifx \binits  \undefined \def \binits#1{#1}\fi
\ifx \bauthor  \undefined \def \bauthor#1{#1}\fi
\ifx \batitle  \undefined \def \batitle#1{#1}\fi
\ifx \bjtitle  \undefined \def \bjtitle#1{#1}\fi
\ifx \bvolume  \undefined \def \bvolume#1{\textbf{#1}}\fi
\ifx \byear  \undefined \def \byear#1{#1}\fi
\ifx \bissue  \undefined \def \bissue#1{#1}\fi
\ifx \bfpage  \undefined \def \bfpage#1{#1}\fi
\ifx \blpage  \undefined \def \blpage #1{#1}\fi
\ifx \burl  \undefined \def \burl#1{\textsf{#1}}\fi
\ifx \doiurl  \undefined \def \doiurl#1{\url{https://doi.org/#1}}\fi
\ifx \betal  \undefined \def \betal{\textit{et al.}}\fi
\ifx \binstitute  \undefined \def \binstitute#1{#1}\fi
\ifx \binstitutionaled  \undefined \def \binstitutionaled#1{#1}\fi
\ifx \bctitle  \undefined \def \bctitle#1{#1}\fi
\ifx \beditor  \undefined \def \beditor#1{#1}\fi
\ifx \bpublisher  \undefined \def \bpublisher#1{#1}\fi
\ifx \bbtitle  \undefined \def \bbtitle#1{#1}\fi
\ifx \bedition  \undefined \def \bedition#1{#1}\fi
\ifx \bseriesno  \undefined \def \bseriesno#1{#1}\fi
\ifx \blocation  \undefined \def \blocation#1{#1}\fi
\ifx \bsertitle  \undefined \def \bsertitle#1{#1}\fi
\ifx \bsnm \undefined \def \bsnm#1{#1}\fi
\ifx \bsuffix \undefined \def \bsuffix#1{#1}\fi
\ifx \bparticle \undefined \def \bparticle#1{#1}\fi
\ifx \barticle \undefined \def \barticle#1{#1}\fi
\bibcommenthead
\ifx \bconfdate \undefined \def \bconfdate #1{#1}\fi
\ifx \botherref \undefined \def \botherref #1{#1}\fi
\ifx \url \undefined \def \url#1{\textsf{#1}}\fi
\ifx \bchapter \undefined \def \bchapter#1{#1}\fi
\ifx \bbook \undefined \def \bbook#1{#1}\fi
\ifx \bcomment \undefined \def \bcomment#1{#1}\fi
\ifx \oauthor \undefined \def \oauthor#1{#1}\fi
\ifx \citeauthoryear \undefined \def \citeauthoryear#1{#1}\fi
\ifx \endbibitem  \undefined \def \endbibitem {}\fi
\ifx \bconflocation  \undefined \def \bconflocation#1{#1}\fi
\ifx \arxivurl  \undefined \def \arxivurl#1{\textsf{#1}}\fi
\csname PreBibitemsHook\endcsname

\bibitem[\protect\citeauthoryear{Anstreicher and
  Burer}{2010}]{AnstreicherBurer2010}
\begin{barticle}
\bauthor{\bsnm{Anstreicher}, \binits{K.M.}},
\bauthor{\bsnm{Burer}, \binits{S.}}:
\batitle{Computable representations for convex hulls of low-dimensional
  quadratic forms}.
\bjtitle{Mathematical Programming}
\bvolume{124},
\bfpage{33}--\blpage{43}
(\byear{2010})
\doiurl{10.1007/s10107-010-0355-9}
\end{barticle}
\endbibitem

\bibitem[\protect\citeauthoryear{Sherali and
  Adams}{2013}]{sherali2013reformulation}
\begin{bbook}
\bauthor{\bsnm{Sherali}, \binits{H.D.}},
\bauthor{\bsnm{Adams}, \binits{W.P.}}:
\bbtitle{A Reformulation-linearization Technique for Solving Discrete and
  Continuous Nonconvex Problems}.
\bpublisher{Springer},
\blocation{Berlin}
(\byear{2013})
\end{bbook}
\endbibitem

\bibitem[\protect\citeauthoryear{Belotti et~al.}{2010}]{belotti2010valid}
\begin{barticle}
\bauthor{\bsnm{Belotti}, \binits{P.}},
\bauthor{\bsnm{Miller}, \binits{A.J.}},
\bauthor{\bsnm{Namazifar}, \binits{M.}}:
\batitle{Valid inequalities and convex hulls for multilinear functions}.
\bjtitle{Electronic Notes in Discrete Mathematics}
\bvolume{36},
\bfpage{805}--\blpage{812}
(\byear{2010})
\end{barticle}
\endbibitem

\bibitem[\protect\citeauthoryear{Anstreicher
  et~al.}{2021}]{AnstreicherBurerPark2021}
\begin{barticle}
\bauthor{\bsnm{Anstreicher}, \binits{K.M.}},
\bauthor{\bsnm{Burer}, \binits{S.}},
\bauthor{\bsnm{Park}, \binits{K.}}:
\batitle{Convex hull representations for bounded products of variables}.
\bjtitle{Journal of Global Optimization}
\bvolume{80},
\bfpage{757}--\blpage{778}
(\byear{2021})
\doiurl{10.1007/s10898-021-01046-7}
\end{barticle}
\endbibitem

\bibitem[\protect\citeauthoryear{Zhang et~al.}{2026}]{zhang2026nonnegative}
\begin{botherref}
\oauthor{\bsnm{Zhang}, \binits{Y.}},
\oauthor{\bsnm{Ouyang}, \binits{Y.}},
\oauthor{\bsnm{Yang}, \binits{B.}}:
Nonnegative quadratics over a quadrant with a bilinear constraint.
arXiv preprint arXiv:2608.16836
(2026)
\end{botherref}
\endbibitem

\bibitem[\protect\citeauthoryear{Blekherman
  et~al.}{2012}]{blekherman2012semidefinite}
\begin{bbook}
\bauthor{\bsnm{Blekherman}, \binits{G.}},
\bauthor{\bsnm{Parrilo}, \binits{P.A.}},
\bauthor{\bsnm{Thomas}, \binits{R.R.}}:
\bbtitle{Semidefinite Optimization and Convex Algebraic Geometry}.
\bpublisher{SIAM},
\blocation{Philadelphia, PA}
(\byear{2012})
\end{bbook}
\endbibitem

\end{thebibliography}

\end{document}